\def\RSthmtxt{theorem~}\newref{thm}{name = \RSthmtxt}}
\def\RSlemtxt{lemma~}\newref{lem}{name = \RSlemtxt}}
\numberwithin{equation}{section}
\numberwithin{figure}{section}
\theoremstyle{definition}
\newtheorem{defn}{\protect\definitionname}[section]
\theoremstyle{plain}
\newtheorem{thm}{\protect\theoremname}[section]
\theoremstyle{plain}
\newtheorem{prop}{\protect\propositionname}[section]
\theoremstyle{remark}
\newtheorem{rem}{\protect\remarkname}[section]
\theoremstyle{plain}
\newtheorem{lem}{\protect\lemmaname}[section]
\theoremstyle{plain}
\newtheorem{cor}{\protect\corollaryname}[section]
\providecommand{\corollaryname}{Corollary}
\providecommand{\definitionname}{Definition}
\providecommand{\lemmaname}{Lemma}
\providecommand{\propositionname}{Proposition}
\providecommand{\remarkname}{Remark}
\providecommand{\theoremname}{Theorem}
\begin{document}
\title[Strichartz estimates w/o loss outside many convex obstacles]{Strichartz estimates without loss outside many strictly convex obstacles}
\author{David Lafontaine {*}}
\thanks{{*} d.lafontaine@bath.ac.uk, University of Bath, Department of Mathematical
Sciences}
\begin{abstract}
We prove Strichartz estimates without loss for Schrödinger
and wave equations outside finitely many strictly convex obstacles
verifying Ikawa's condition, introduced in \cite{Ikawa2}. We extend
the approach introduced in \cite{Schreodinger,Waves} for the two
convex case.
\end{abstract}

\maketitle

\section{Introduction}

Let $(M,g)$ be a Riemannian manifold of dimension $d$. We are interested
in the Schrödinger
\begin{align}
\begin{cases}
i\partial_{t}u-\Delta_{g}u=0\\
u(0)=u_{0}
\end{cases}\label{eq:lw-1}
\end{align}
 and wave equations on $M$
\begin{align}
\begin{cases}
\partial_{t}^{2}u-\Delta_{g}u=0\\
(u(0),\partial_{t}u(0))=(f,g),
\end{cases}\label{eq:lw}
\end{align}
where $\Delta_{g}$ is the Laplace-Beltrami operator. A key to
study the perturbative theory and the nonlinear problems associated
with these equations is to understand the size and the decay of the
linear flows. One tool to quantify these decays is the so-called \textit{Strichartz
estimates
\begin{gather*}
\Vert u\Vert_{L^{q}(0,T)L^{r}(M)}\leq C_{T}\left(\Vert u_{0}\Vert_{\dot{H}^{s}}+\Vert u_{1}\Vert_{\dot{H}^{s-1}}\right),\ \text{(Waves)}\\
\Vert u\Vert_{L^{q}(0,T)L^{r}(M)}\leq C_{T}\Vert u_{0}\Vert_{L^{2}},\ \text{(Schrödinger)}
\end{gather*}
}where $(p,q)$ has to follow an admissiblity condition given by
the scaling of the equation, respectively
\begin{gather*}
\frac{1}{q}+\frac{d}{r}=\frac{d}{2}-s,\ \frac{1}{q}+\frac{d-1}{2r}\leq\frac{d-1}{4},\\
\frac{2}{q}+\frac{d}{r}=\frac{d}{2},\ (q,r,d)\neq(2,\infty,2),
\end{gather*}
for the Schrödinger and wave equations.

These estimates have a long story, beginning with the work of \cite{Strichartz}
for the $p=q$ case in $\mathbb{R}^{n}$, extended to all exponents
by \cite{GV85}, \cite{LindbladSogge}, and \cite{KeelTao}. For the
wave equation in a manifold without boundary, the finite speed of
propagation shows that it suffices to work in local coordinates to
obtain local Strichartz estimates. This path was followed by \cite{Kapitanskii},
\cite{MoSeSo}, \cite{SmithC11}, and \cite{Tataruns}. The case of
a manifold with boundary, where reflexions have to be dealt with,
is more difficult. Estimates outside one convex obstacle for the wave
equation were obtained by \cite{SS95}, following the parametrix construction
of Melrose and Taylor, which gives an explicit representation of the
solution near diffractive points, and for the Schrödinger equation
later by \cite{MR2672795}.

The first local estimates for the wave equation on a general domain
were shown by \cite{BLP} for certain ranges of indices, then extended
by \cite{BSS}. These estimates cannot be as good as in the flat case
: \cite{OanaCounterex} showed indeed that a loss has to occur if
some concavity is met, because of the formation of caustics. Recently,
\cite{ILPAnnals} and \cite{ILLPGeneral} obtained almost sharp local
Strichartz estimates inside a convex domain. 

One obstruction to the establishment of global estimates without loss
is the presence of trapped geodesics. Under a non trapping assumption, such estimates
were established for the wave equation by the works of \cite{SmithSoggeNonTrapping},
\cite{MR2001179} and \cite{Metcalfe}. For the Schrödinger flow in
the boundaryless case, \cite{BoucletTzvetkov}, \cite{Bouclet},
\cite{HassellTaoWunsch}, \cite{StaffTata} obtained the estimates
in several non-trapping geometries.

When trapped geodesics are met, \cite{MR2066943} showed that a loss
with respect to the flat case has to occur for the wave equation
in the global $L^{2}$ integrability of the flow, and his counterpart,
the smoothing estimate, for the Schrödinger equation, which write
respectively in the flat case as
\begin{gather*}
\Vert(\chi u,\chi\partial_{t}u)\Vert_{L^{2}(\mathbb{R},\dot{H}^{s}\times\dot{H}^{s-1})}\lesssim\Vert u_{0}\Vert_{\dot{H}^{s}}+\Vert u_{1}\Vert_{\dot{H}^{s-1}}\ \text{(Waves)},\\
\Vert\chi u\Vert_{L^{2}(\mathbb{R},H^{1/2})}\lesssim\Vert u_{0}\Vert_{L^{2}\ }\text{(Schrödinger)}.
\end{gather*}

Despite this loss in the smoothing estimate, \cite{MR2720226} showed
Strichartz estimates without loss for the Schrödinger equation in
an asymptotically euclidian manifold without boundary for which the
trapped set is sufficiently small and exhibits an hyperbolic dynamic. 

Following this breakthrough, we recently proved in \cite{Schreodinger,Waves}
global Strichartz estimates without loss for Schrödinger and wave
equations outside two strictly convex obstacles, exhibiting in the
boundary case the first trapped situation where no loss occurs. The
goal of this paper is to extend this result to the case of the exterior
of $N\geq3$ convex obstacles, which is in many aspects a counterpart
with boundaries of the framework studied in \cite{MR2720226} .

In this $N$-convex obstacles setting, there is infinitely many trapped
rays. Therefore, there is a competition between the large number of
parts of the flow that remain trapped between the obstacles and the
decay of each such part. For a sufficient decay to hold, this competition
has to occur in a favorable way. This is the so called Ikawa condition:
\begin{defn}[Ikawa condition, 1: strong hyperbolicity]
There exists $\alpha>0$ such that the following condition holds
\begin{equation}
\sum_{\gamma\in\mathcal{P}}\lambda_{\gamma}d_{\gamma}e^{\alpha d_{\gamma}}<\infty.\label{eq:IK1}
\end{equation}
\end{defn}
Here $\mathcal{P}$ denotes the set of all primitive periodic trajectories,
$d_{\gamma}$ the length of the trajectory $\gamma$ and $\lambda_{\gamma}=\sqrt{\mu_{\gamma}\mu'_{\gamma}}$,
where $\mu_{\gamma}$ and $\mu'_{\gamma}$ are the two eigenvalues
of modulus smaller than one of the Poincaré map associated with $\gamma$.
This condition was first introduced by \cite{IkawaMult} when investigating
the decay of the local energy of the wave equation. Notice that it is in particular
automatically verified when the obstacles are sufficiently far from each other. It
is the analog of the topologic pressure condition arising in \cite{MR2720226}.

We will moreover suppose the second part of the Ikawa condition to
be verified, namely, denoting by $\Theta_{i}$ the obstacles:
\begin{defn}[Ikawa condition, 2: no obstacle in shadow]
For all $i,j,k$ pairwise distincts,
\begin{equation}
\text{Conv}(\Theta_{i}\cup\Theta_{j})\cap\Theta_{k}=\emptyset.\label{eq:IK2}
\end{equation}

At the difference of the first one, and excepting the degenerated
situation where a periodic trajectory is tangent to an obstacle, this
condition may be purely technical (it permits to construct solutions
without been preoccupied by the shadows induced by the obstacles) and
should be avoided with a more careful analysis. 

We are now in position to state our result.
\end{defn}
\begin{thm}
\label{th}Let $(\Theta_{i})_{1\leq i\leq N}$ be a finite family
of smooth strictly convex subsets of $\mathbb{R}^{3}$, such that Ikawa's
conditions (\ref{eq:IK1}) and (\ref{eq:IK2}) hold, and $\Omega=\mathbb{R}^{3}\backslash\underset{1\leq i\leq N}{\cup}\Theta_{i}$
. Then, under the non-endpoint admissibility conditions:
\begin{gather*}
\frac{1}{q}+\frac{3}{r}=\frac{3}{2}-s,\ \frac{1}{q}+\frac{1}{r}\leq\frac{1}{2},\ q\neq\infty,\ \text{(Waves)}\\
\frac{2}{q}+\frac{3}{r}=\frac{3}{2},\ (q,r)\neq(2,6),\ \text{(Schrödinger)}
\end{gather*}
global Strichartz estimates without loss hold for both Schrödinger
and wave equations in $\Omega$ :
\begin{gather*}
\Vert u\Vert_{L^{q}(\mathbb{R},L^{r}(\Omega))}\lesssim\Vert u_{0}\Vert_{\dot{H}^{s}}+\Vert u_{1}\Vert_{\dot{H}^{s-1}},\ \text{(Waves)}\\
\Vert u\Vert_{L^{q}(\mathbb{R},L^{r}(\Omega))}\lesssim\Vert u_{0}\Vert_{L^{2}}.\ \text{(Schrödinger)}
\end{gather*}
\end{thm}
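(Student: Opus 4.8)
The plan is to follow the now-standard reduction of global Strichartz estimates to a combination of (i) local-in-time Strichartz estimates near the obstacles, (ii) global Strichartz estimates in free space $\mathbb{R}^3$, and (iii) a quantitative local energy decay estimate that glues the two together. More precisely, writing the solution via a partition of unity $\chi$ equal to $1$ in a large ball containing all the $\Theta_i$, one splits $u = \chi u + (1-\chi) u$; the piece $(1-\chi)u$ satisfies a free-space equation with a compactly supported commutator source and is handled by the flat-space estimates of \cite{KeelTao} together with Christ--Kiselev, while $\chi u$ is controlled by local-in-time estimates. The local-in-time Strichartz estimates outside a single convex obstacle (or any finite union, on a fixed bounded time interval) are available from \cite{SS95} for waves and \cite{MR2672795} for Schrödinger, or alternatively from the general results of \cite{BSS}, \cite{BLP}; on a bounded time interval the trapping plays no role. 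The nontrivial input is a high-frequency local energy decay statement of the form $\|\chi e^{it\Delta} \chi f\|_{L^2(\mathbb{R}, L^2)} \lesssim \|f\|_{L^2}$ (and its wave analogue), \emph{uniformly down to frequency $1$}, which is exactly where Ikawa's condition \eqref{eq:IK1} enters: it guarantees that the cut-off resolvent $\chi (\Delta - (\lambda \pm i0)^2)^{-1}\chi$ is polynomially bounded (indeed, the meromorphic continuation has no poles in a strip), a result going back to \cite{IkawaMult} and refined in the semiclassical framework. Condition \eqref{eq:IK2} is used to make the parametrix construction for the trapped part of the flow tractable, by ensuring no obstacle lies in the geometric shadow of the convex hull of two others, so that the Melrose--Taylor / Ikawa-type reflected-wave expansion used in \cite{Schreodinger,Waves} extends.

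In more detail, the key steps, in order, are as follows. First, reduce to frequency-localized data: by Littlewood--Paley theory and the scaling of the equations, it suffices to prove the estimates for data spectrally localized at frequency $\sim 2^k$ with a constant uniform in $k$; after rescaling this becomes a \emph{semiclassical} problem at frequency $1$ on the $2^k$-dilated obstacles, but since the obstacles are fixed one works directly at frequency $2^k$. Second, establish the semiclassical local smoothing / local energy decay bound: using the resolvent estimate implied by Ikawa's condition (the absence of resonances in a strip $\{\mathrm{Im}\,\lambda > -\alpha'\}$ for some $\alpha' > 0$, with at most polynomial growth of the cut-off resolvent on the real axis), derive $\|\chi u\|_{L^2_t H^{1/2}} \lesssim \|u_0\|_{L^2}$ for Schrödinger and its wave counterpart. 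Third, upgrade local smoothing plus local-in-time Strichartz to global-in-time Strichartz on the near region: this is the argument of \cite{Schreodinger,Waves} and, in the boundaryless setting, \cite{MR2720226} --- one uses the local smoothing to sum the $L^q_t L^r_x$ norms over unit time intervals, exploiting that the solution reenters the near region with controlled $L^2$ mass. Fourth, handle the free part $(1-\chi) u$ by propagating it as a perturbation of the flat flow, using finite speed of propagation in the wave case and a parametrix/Duhamel argument in the Schrödinger case, absorbing the commutator $[\Delta, \chi] u$ (supported in the near region) via the already-established near-region estimates. Finally, reassemble the Littlewood--Paley pieces.

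The main obstacle --- the heart of the paper --- is the second step: proving the semiclassical local energy decay estimate \emph{uniformly across all dyadic frequencies}, which requires converting Ikawa's convergence condition \eqref{eq:IK1} into a resolvent bound on the real axis with a loss no worse than polynomial. Because there are infinitely many trapped periodic rays, one must carefully track the combinatorics of reflected rays bouncing among the $N$ obstacles: each itinerary $(i_1, i_2, \dots)$ in the symbolic dynamics contributes a wave whose amplitude is governed by a product of reflection factors comparable to $\prod \sqrt{\mu_\gamma \mu'_\gamma}$, and \eqref{eq:IK1} is precisely the statement that the sum of these amplitudes, weighted by the $e^{\alpha d_\gamma}$ coming from the spectral strip width, converges. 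Turning this into a genuine estimate --- as opposed to a formal series --- demands a parametrix for the multiply-reflected flow whose error terms are summable, which is where \eqref{eq:IK2} is invoked and where the analysis of \cite{Schreodinger,Waves} must be pushed from two obstacles (a single closed orbit, hyperbolic dynamics on two symbols) to $N$ obstacles (a horseshoe-type hyperbolic set on $N$ symbols). I expect the remaining steps to be comparatively routine given the existing literature; the genuinely new work is this uniform-in-frequency resolvent/local-smoothing bound and the attendant control of the branching geometry of reflected rays.
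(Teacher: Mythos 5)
There is a genuine gap at your step 2, and it is fatal to the architecture you propose. The lossless local energy decay / local smoothing bound you take as the key input --- $\Vert\chi e^{it\Delta}\chi f\Vert_{L^{2}(\mathbb{R},L^{2})}\lesssim\Vert f\Vert_{L^{2}}$, or $\Vert\chi u\Vert_{L^{2}_{t}H^{1/2}}\lesssim\Vert u_{0}\Vert_{L^{2}}$ with $\chi$ a spatial cutoff covering the obstacles --- is \emph{false} as soon as the cutoff region contains the trapped set: as recalled in the paper's introduction (Burq's result, cited there as \cite{MR2066943}), trapping forces a loss, and under Ikawa's condition (\ref{eq:IK1}) the best available cut-off resolvent bound on the real axis is $O(\lambda^{-1}\log\lambda)$, which yields only the smoothing estimate with logarithmic loss $(h|\log h|)^{1/2}$ (Proposition \ref{prop:smooth_logloss}). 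Ikawa's condition gives a resonance-free strip and polynomial resolvent bounds, but that is not enough to remove the logarithm, and a genuinely lossless bound with a cutoff through $\mathcal{K}$ would contradict the known lower bounds for trapping geometries. Consequently your step 3 (gluing local-in-time Strichartz to global ones via local smoothing) produces Strichartz estimates \emph{with} a loss of derivatives, not the lossless estimates claimed in the theorem.

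The missing idea is precisely the mechanism the paper uses, following \cite{MR2720226} and \cite{Schreodinger,Waves}: split in phase space rather than only in physical space. One proves a lossless smoothing estimate only \emph{microlocally outside} $\mathcal{K}$ (Proposition \ref{prop:smooth_wo}), via a commutator argument with the Morawetz--Ralston--Strauss escape function and an iteration that absorbs the lossy remainder; near $\mathcal{K}$ one keeps the logarithmically lossy estimate and compensates it by proving Strichartz/dispersive estimates on time intervals of length $\epsilon h|\log h|$ (Proposition \ref{prop:semilog}), i.e. the pointwise bound (\ref{eq:pointdisp}) up to times $\epsilon|\log h|$, for data microlocalized on points that stay near trapped rays in logarithmic time (the cutoffs $Q_{I}^{T}$). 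That logarithmic-time dispersive estimate is where the real work lies: the Ikawa reflected-phase parametrix must be summed over all reflection histories $J=rI+l$ among $N$ obstacles, with the product of curvature factors $\Lambda\varphi_{J}\sim\lambda_{I}^{r}$ and the combinatorial growth $N^{t}$ balanced exactly by condition (\ref{eq:IK1}), and with (\ref{eq:IK2}) ensuring the phases are globally defined and the flow retains H\"older regularity near tangencies. Your proposal correctly identifies the role of (\ref{eq:IK1}) and (\ref{eq:IK2}) in controlling the branching parametrix, but routes that control through a resolvent/lossless-smoothing statement that cannot hold; without the logarithmic-time Strichartz-near-the-trapped-set substitute, the argument does not close.
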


\subsection*{Overview of the proof}

We generalise the approach introduced in \cite{Waves,Schreodinger}. 

As we dealt with the Schrödinger equation outside two convex obstacles
in \cite{Schreodinger} and showed in \cite{Waves} how to adapt the
work to the wave equation, the main novelty of this note is how to
handle the $N$-convex framework, and therefore we present a detailed
proof of our main result in the more intricate case of the Schrödinger
equation, and briefly explain how to adapt it to the wave equation
with the material of \cite{Waves} in the last section.

In the flat case, the smoothing estimate permits to stack Strichartz
estimates in time $\sim h$ for data of frequency $\sim h^{-1}$ to
show global estimates. As remarked in \cite{MR2720226}, the logarithmic
loss that appears in our setting in the smoothing estimate can be compensated if 
we show Strichartz estimates in time $h|\log h|$
instead of $h$ near the trapped set, provided a smoothing estimate
without loss in the non trapping region is at hand. Therefore, our
first section is devoted to prove such an estimate, using a commutator
argument together with the escape function construction of Morawetz,
Raltson and Strauss \cite{MorRS}. We then show that we can reduce
ourselves to data micro-locally supported near trapped trajectories,
and that remain in a neighbourhood of it in logarithmic times. We extend
to the $N$-convex framework the construction of an approximate solution
for such data done in \cite{Waves,Schreodinger} following ideas of
\cite{Ikawa2,IkawaMult} and \cite{MR1254820}, and finally, we show
that under the strong hyperbolicity assumption (\ref{eq:IK1}),
this construction gives a sufficient decay.

\subsection*{Notations}
\begin{itemize}
\item We denote by $\mathcal{K}\subset T^{\star}\Omega\cup T^{\star}\partial\Omega$
the trapped set, which is is composed of infinitely many periodic
trajectories,
\item and by $\mathcal{P}$ the set of all primitive periodic trajectories,
that is, followed only once, 
\item the operator $\psi(-h^{2}\Delta)$ localizes at frequencies $|\xi|\in[\alpha_{0}h^{-1},\beta_{0}h^{-1}],$
we refer to \cite{MR2672795} for the definition of this operator,
\item the set $\mathcal{I}$ is the set of all stories of reflexions, that
is all finites sequences $(j_{1},\cdots,j_{k})$ with values in $\llbracket1,\cdots,N\rrbracket$
such that $j_{i}\neq j_{i+1}$,
\item moreover, we will adopt all the notations introduced in \cite{Schreodinger}.
Let us in particular recall that 
\[
\Phi_{t}:T^{\star}\Omega\cup T^{\star}\partial\Omega\longrightarrow T^{\star}\Omega\cup T^{\star}\partial\Omega
\]
 denotes the billiard flow on $\Omega$: $\Phi_{t}(x,\xi)$ is the
point attained after a time $t$ from the point $x$ in the direction
$\frac{\xi}{|\xi|}$ at the speed $|\xi|$, following the laws of
geometrical optics,
\item finally, let us recall that the spatial and directional components
of $\Phi_{t}$ are respectively denoted $X_{t}$ and $\Xi_{t}$. 
\end{itemize}

\section{Smoothing effect without loss outside the trapped set}

Let us recall the smoothing effect with logarithmic loss obtained
in \cite{MR2066943} in our framework of a family of strictly convex
obstacle verifying Ikawa's condition:
\begin{prop}
\label{prop:smooth_logloss}For any $\chi\in C_{c}^{\infty}(\mathbb{R}^{3})$
and any $u_{0}\in L^{2}(\Omega)$ such that $u_{0}=\psi(-h^{2}\Delta)u_{0}$,
we have
\begin{equation}
\Vert\chi e^{it\Delta_{D}}u_{0}\Vert_{L^{2}(\mathbb{R},L^{2})}\lesssim(h|\log h|)^{\frac{1}{2}}\Vert u_{0}\Vert_{L^{2}}.\label{eq:smooth_logloss}
\end{equation}
\end{prop}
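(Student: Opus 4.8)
The plan is to prove \propref{smooth_logloss} as an $L^2\to L^2$ resolvent estimate combined with a $TT^\ast$/Kato smoothing argument. Recall that local smoothing for $e^{it\Delta_D}$ restricted to frequencies $\sim h^{-1}$ is equivalent, via the standard Kato-theory dictionary, to a uniform bound on the cutoff resolvent $\chi(-\Delta_D-\lambda^2-i0)^{-1}\chi$ at energies $\lambda\sim h^{-1}$; more precisely $\Vert\chi e^{it\Delta_D}u_0\Vert_{L^2_tL^2_x}^2$ controls, after a Fourier transform in time and Plancherel, $\sup_{\lambda\sim h^{-1}}\Vert\chi R(\lambda)\chi\Vert_{L^2\to L^2}$ times $\Vert u_0\Vert_{L^2}^2$, where $R(\lambda)=(-\Delta_D-\lambda^2-i0)^{-1}$. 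So the $(h|\log h|)^{1/2}$ bound on the square root in \eqref{eq:smooth_logloss} amounts exactly to the bound $\Vert\chi R(\lambda)\chi\Vert_{L^2\to L^2}\lesssim h|\log h| = \lambda^{-1}\log\lambda$ uniformly for $\lambda$ large, i.e. the logarithmic resolvent estimate for several convex obstacles under Ikawa's condition.

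The key input I would quote is precisely this resolvent bound, which is exactly what \cite{MR2066943} (Burq) establishes in this geometry: Ikawa's condition \eqref{eq:IK1} guarantees that the trapped set is hyperbolic with small enough topological pressure, and Burq's theorem gives a meromorphic continuation of the resolvent to a strip below the real axis with a polynomial (in fact $O(\lambda^{-1}\log\lambda)$ after the $\psi$-localization) bound on the cutoff resolvent on the real axis. Concretely the steps are: (i) write $u(t)=e^{it\Delta_D}u_0$ with $u_0=\psi(-h^2\Delta)u_0$, extend $\chi u$ by zero for $t<0$, and take the time-Fourier transform $\widehat{\chi u}(\tau)=\chi R(\sqrt\tau)u_0$ for $\tau>0$ in the appropriate limiting-absorption sense; (ii) apply Plancherel in $t$ to reduce $\Vert\chi u\Vert_{L^2_tL^2_x}^2$ to $\int_0^\infty\Vert\chi R(\sqrt\tau)u_0\Vert_{L^2}^2\,d\tau$; (iii) use the frequency localization $\psi(-h^2\Delta)$ to restrict the $\tau$-integral to $\tau\sim h^{-2}$, a window of length $\sim h^{-2}$; (iv) insert the resolvent bound $\Vert\chi R(\sqrt\tau)\psi(-h^2\Delta)\Vert_{L^2\to L^2}\lesssim h|\log h|$ on that window, which yields $\int_{\tau\sim h^{-2}}(h|\log h|)^2\Vert u_0\Vert_{L^2}^2\cdot(\text{something})\,d\tau$; being slightly more careful with the $TT^\ast$ bookkeeping one gets the factor $h|\log h|$ rather than its square, and the square root gives \eqref{eq:smooth_logloss}.

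The main obstacle — and the reason this is stated as a recalled proposition rather than reproved — is establishing the logarithmic resolvent estimate itself, i.e. controlling the boundary value of the resolvent on the real axis uniformly in the high-frequency limit despite the presence of infinitely many trapped rays. This is the technical heart of \cite{MR2066943}: one must combine the hyperbolic dispersion estimates for the billiard flow near each periodic orbit (the eigenvalues $\mu_\gamma,\mu'_\gamma$ of the Poincar\'e maps controlling the instability) with a summation over all primitive periodic trajectories $\gamma\in\mathcal P$, and it is exactly here that \eqref{eq:IK1} enters: the series $\sum_\gamma\lambda_\gamma d_\gamma e^{\alpha d_\gamma}$ converges, which is what makes the contributions of the longer and longer trapped orbits summable and produces only a logarithmic (rather than polynomial or worse) loss. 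Since that analysis is carried out in \cite{MR2066943} in precisely this setting, I would not redo it; the proof here consists only of the soft functional-analytic reduction of smoothing to the cutoff resolvent bound together with the frequency-localized bookkeeping in steps (i)--(iv) above, which accounts for the $h|\log h|$ with the correct power.
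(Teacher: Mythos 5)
The paper itself does not prove \propref{smooth_logloss}: it is explicitly \emph{recalled} from \cite{MR2066943}, so there is no internal argument to compare yours against, and quoting the hard analytic input rather than reproving it is exactly what the author does. Your reduction is the standard mechanism hidden inside that citation: frequency-localized local smoothing with constant $M^{1/2}$ is equivalent, via Kato's theory of smooth operators (equivalently a $TT^{\star}$ argument on the spectral measure), to the cutoff resolvent bound $\Vert\chi(-\Delta_{D}-\lambda^{2}\mp i0)^{-1}\chi\Vert_{L^{2}\to L^{2}}\lesssim M$ at $\lambda\sim h^{-1}$, and the bound $M\sim\lambda^{-1}\log\lambda\sim h|\log h|$ in this geometry follows from Ikawa's resonance-free strip under \eqref{eq:IK1} together with the semiclassical maximum principle. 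One caveat on your bookkeeping: steps (ii)--(iv) as literally written do not close. If you extend by zero, take the time Fourier transform and then insert the operator-norm bound into $\int_{\tau\sim h^{-2}}\Vert\chi R(\sqrt{\tau})u_{0}\Vert^{2}\,d\tau$, you are both applying $\Vert\chi R\chi\Vert$ to a non-cutoff datum $u_{0}$ and paying the measure $h^{-2}$ of the frequency window, which would produce $|\log h|^{2}$ rather than $h|\log h|$. The correct route (which your closing remark gestures at but should be made the actual argument) is the Kato/Plancherel identity through $\operatorname{Im}\chi R(\tau+i0)\chi$ and the spectral measure, which yields that the \emph{square} of the smoothing constant is comparable to $\sup_{\lambda\sim h^{-1}}\Vert\chi R(\lambda^{2}+i0)\chi\Vert$, giving \eqref{eq:smooth_logloss} with the stated power. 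With that step stated properly, your proposal is a faithful account of why the recalled proposition holds, and it defers to \cite{MR2066943} for precisely the same reason the paper does.
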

The aim of this first section is to prove a smoothing effect without
loss outside the trapped set:
\begin{prop}[Local smoothing without loss in the non trapping region]
\label{prop:smooth_wo}Let $\phi\in C_{c}^{\infty}(\mathbb{R}^{3}\times\mathbb{R}^{3})$
be supported in the complementary of the trapped set, $\mathcal{K}^{c}$.
Then we have, for $u_{0}=\psi(-h^{2}\Delta)u_{0}$
\begin{equation}
\Vert\text{Op}_{h}(\phi)e^{-it\Delta_{D}}u_{0}\Vert_{L^{2}(\mathbb{R},H^{1/2}(\Omega))}\lesssim\Vert u_{0}\Vert_{L^{2}}.\label{eq:lswlnt}
\end{equation}
\end{prop}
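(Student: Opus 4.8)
The plan is to prove the local smoothing estimate \eqref{eq:lswlnt} by a positive-commutator (Mourre-type) argument, combining the logarithmically lossy global estimate \eqref{eq:smooth_logloss} with a purely local gain coming from an escape function supported away from $\mathcal K$. First I would recall the Morawetz--Ralston--Strauss construction \cite{MorRS}: since the symbol $\phi$ is supported in the open set $\mathcal K^c$, and every bicharacteristic in $\mathcal K^c$ (restricted to an energy shell, for the semiclassical principal symbol $p=|\xi|^2_g$) is non-trapped, i.e. leaves any compact set both forward and backward in time, one can build a smooth, compactly supported (in $x$), bounded symbol $a(x,\xi)$, homogeneous of degree zero in $\xi$ away from the origin, whose Poisson bracket with the symbol of $-\Delta_D$ is \emph{positive} (elliptic of order one) on a conic neighbourhood of $\mathrm{supp}(\phi)$ intersected with the frequency shell where $\psi(-h^2\Delta)$ localizes. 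Concretely, one sets $a$ to be an increasing function along the flow in the region of interest, cut off near the boundary of $\mathcal K^c$; the strict convexity of the obstacles guarantees that boundary reflections are transversal and do not destroy the monotonicity (the generalized billiard flow is well-behaved away from glancing, and glancing rays lie in a set one can exclude by shrinking the support of $a$).

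Next I would run the commutator argument. Write $A = \mathrm{Op}_h(a)$ (a bounded operator, self-adjoint up to $O(h)$, respecting the Dirichlet boundary condition after the usual care with the boundary term), $u(t) = e^{-it\Delta_D}u_0$, and compute $\frac{d}{dt}\langle A u(t), u(t)\rangle = \langle i[-\Delta_D, A]\,u(t), u(t)\rangle$, plus a boundary contribution. By construction of $a$ and the sharp G\r{a}rding inequality, the commutator satisfies
\begin{equation}
i[-\Delta_D, A] \geq c\,h^{-1}\,\mathrm{Op}_h(\phi)^\ast\,(-h^2\Delta)^{1/2}\,\mathrm{Op}_h(\phi) - C\,h^{-1}\,B^\ast B - Ch,
\end{equation}
where $c>0$, and $B = \mathrm{Op}_h(b)$ with $b$ supported in $\mathcal K^c$ but on a \emph{slightly larger} set than $\phi$, near the "edge" of the escape region --- this is the standard feature that one only controls the gain modulo a term living where the escape function is still being switched on. The Dirichlet boundary term can be arranged to have a favourable sign, as in \cite{MorRS, MR2001179}, precisely because the obstacles are strictly convex (the normal derivative term is controlled by the geometry of $\partial\Omega$). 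Integrating in $t\in\mathbb R$ and using $\sup_t\|\langle A u(t),u(t)\rangle\| \lesssim \|A\|\,\|u_0\|_{L^2}^2 \lesssim \|u_0\|_{L^2}^2$ gives
\begin{equation}
h^{-1}\,\big\|\mathrm{Op}_h(\phi)\,u\big\|_{L^2(\mathbb R, H^{1/2})}^2 \lesssim \|u_0\|_{L^2}^2 + h^{-1}\big\|B u\big\|_{L^2(\mathbb R, L^2)}^2 + h\|u_0\|_{L^2}^2,
\end{equation}
after absorbing the $(-h^2\Delta)^{1/2}$ into an $H^{1/2}$ norm on the frequency support of $\psi$ (here I use $u_0 = \psi(-h^2\Delta)u_0$ and that this localization is essentially preserved by the flow).

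The final --- and I expect the main --- step is to dispose of the error term $h^{-1}\|Bu\|^2_{L^2_t L^2_x}$. Here $B$ is still microlocally supported in $\mathcal K^c$, but now I invoke the global smoothing estimate \eqref{eq:smooth_logloss}: $\|Bu\|_{L^2_tL^2_x} \lesssim \|\chi e^{-it\Delta_D}u_0\|_{L^2_tL^2_x} \lesssim (h|\log h|)^{1/2}\|u_0\|_{L^2}$ for a suitable $\chi$ equal to one on the spatial support. This yields $h^{-1}\|Bu\|^2 \lesssim |\log h|\,\|u_0\|_{L^2}^2$, which is \emph{not} good enough by itself --- it reintroduces the logarithm. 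The resolution, following \cite{MR2720226} and \cite{Schreodinger}, is an iteration/propagation scheme: one chooses a family of escape functions $a_k$ whose gain regions exhaust a fixed neighbourhood of $\mathrm{supp}(\phi)$ in $\mathcal K^c$ in $\sim|\log h|$ steps, each step converting an error supported near $\mathcal K$ into a genuine gain plus an error pushed one notch closer to $\mathcal K$; alternatively, and more cleanly, one exploits that $B$ is supported strictly inside $\mathcal K^c$, so that the bicharacteristic flow from $\mathrm{supp}(b)$ reaches the region where $\phi$ (hence the gained term) is elliptic within a \emph{uniformly bounded} time, and a short-time parametrix / Egorov argument on time intervals of length $O(1)$ (not $O(|\log h|)$) lets one bound $\|Bu\|$ on each such interval by $\|\mathrm{Op}_h(\phi)u\|$ on the shifted interval plus a negligible tail, and summing $O(1/h)$ such intervals against the already-established gain closes the estimate with \emph{no} loss. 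I would carry out this bootstrap carefully, tracking that all the microlocal cutoffs stay in $\mathcal K^c$ and that the constants are $h$-uniform; the delicate point is ensuring the geometry near $\partial\Omega$ (reflections of the escape region) does not force the support of $b$ back towards $\mathcal K$, which is exactly where Ikawa's condition \eqref{eq:IK2} --- no obstacle in the shadow --- keeps the reflected flow under control.
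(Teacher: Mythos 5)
Your overall frame (a positive commutator with a Morawetz--Ralston--Strauss escape function, with the lossy estimate \eqref{eq:smooth_logloss} absorbing remainders) is the same as the paper's, but the step where you close the argument has a genuine gap. Your commutator lower bound posits a same-order error $C h^{-1}B^{\ast}B$ with an order-one constant $C$, supported on the switch-on region of a compactly supported escape function, and neither of your two proposed ways of removing it works as stated. With $O(1)$ constants per step, an iteration over $\sim|\log h|$ nested escape functions produces a factor $C^{|\log h|}=h^{-\log C}$, i.e.\ a polynomial loss rather than no loss; and an error ``pushed one notch closer to $\mathcal{K}$'' can never be ``converted into a genuine gain'' there, since no escape function gains at trapped points. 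The alternative propagation/Egorov route rests on the claim that bicharacteristics issued from $\operatorname{supp}b$ reach the elliptic set of $\phi$ in uniformly bounded time; this is false in general --- a ray leaving through the outer edge of the escape region simply escapes to infinity without ever meeting $\operatorname{supp}\phi$, so $\Vert Bu\Vert$ cannot be bounded by $\Vert\mathrm{Op}_{h}(\phi)u\Vert$ on shifted intervals --- and in any case an Egorov/parametrix argument through reflections at $\partial\Omega$ (including glancing points, which you cannot cut away, since $\phi$ is an arbitrary cutoff in $\mathcal{K}^{c}$ and tangent rays are not trapped) is not available off the shelf. Relatedly, the boundary term is not simply ``of favourable sign by strict convexity''.

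What actually closes the argument in the paper is a smallness, not a propagation, mechanism. The MRS ray function is monotone along \emph{all} generalized rays ($\xi\cdot\nabla a\geq0$ everywhere, strictly positive on a fixed neighborhood $V$ of $\operatorname{supp}\phi$, $a=0$ near $\mathcal{K}$, and $a=x\cdot\xi$ for large $|x|$), so the interior commutator produces no same-order error at all; the non-compactness at spatial infinity is handled by the long-range smoothing estimate coming from the Cardoso--Vodev resolvent bound \cite{CardoVodev}, and the lower-order Garding remainders are the only place where \eqref{eq:smooth_logloss} is invoked (harmless, being lower order in $h$). The only same-order error comes from the boundary: the trace of $A$ acts on Schr\"odinger waves with symbol $\bigl(a(x,\xi_{+})-a(x,\xi_{-})\bigr)/\bigl((\xi_{+}-\xi_{-})\cdot n\bigr)$, nonnegative on the characteristic set by the second ray-function property (this, not convexity, gives the sign), hence bounded below by $-\delta$ with $\delta$ arbitrarily small after cutting off near $\{\tau=\eta^{2}\}$; Garding then yields an error $\delta\Vert\tilde{\Phi}u\Vert_{L^{2}H^{1/2}}$ with $\tilde{\phi}$ again supported in a fixed neighborhood of $\operatorname{supp}\phi$ inside $\mathcal{K}^{c}$. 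That arbitrarily small constant is what makes the bootstrap converge: one iterates the estimate infinitely often at fixed $h$, tracking that the Garding constants may grow like $A^{k}$ while the error factor decays like $(C\delta)^{k}$, chooses $\delta$ with $C\delta A<1$, and lets $k\to\infty$, the final term (estimated with the logarithmic loss) being annihilated by $(C\delta)^{k}$. To salvage your scheme you would need to reproduce this smallness (or another mechanism making the same-order error summable); as written, the proposal does not close.
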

\begin{proof}
We will use the same strategy as in \cite{MR2720226} lemma 2.2, adapting
the proof in the case of a domain with boundary. Notice that, for
any operator $A$,
\begin{equation}
\langle Au,u\rangle(T)-\langle Au,u\rangle(0)=\int_{0}^{T}\int_{\Omega}\langle[i\Delta,A]u,u\rangle+\int_{0}^{T}\int_{\partial\Omega}\langle Au,\partial_{n}u\rangle.\label{eq:smooth_com}
\end{equation}
Thus, if we find an operator $A$ of order $0$ such that $[i\Delta,A]$
is elliptic and positive on the support of $\phi$ and such that the
border term
\[
B=\int_{0}^{T}\int_{\partial\Omega}\langle Au,\partial_{n}u\rangle d\sigma dt
\]
is essentially positive, we shall obtain the desired estimate. 

\subsection*{Notations}

If $b\in C^{\infty}(\mathbb{R}^{3}\times\mathbb{R}^{3})$ is a real
symbol such that $b\geq0$, and $b\geq\alpha$ on $U$, we use Garding
inequality on symbols of the form
\[
b-\alpha\frac{a\bar{a}}{\left(\sup|a|\right)^{2}}\geq0,
\]
where $a\in C_{c}^{\infty}(\mathbb{R}^{3}\times\mathbb{R}^{3})$ is
supported in $U$. Notice that we have $\text{Op}_{h}(a\bar{a})=\text{Op}_{h}(a)\text{Op}_{h}(a)^{\star}+O(h)$.

Moreover, we will denote, in this section and this section only, $\Phi$
for the operator associated to $\phi$.

\subsection*{The symbol of A at the border as an operator acting on Schrödinger
waves}

We perform the semi-classical time change of variable to write:
\[
B=h^{-1}\int_{0}^{hT}\int_{\partial\Omega}\langle A(e^{iht}u_{0}),\partial_{n}(e^{iht}u_{0})\rangle d\sigma dt
\]

We use the strategy of \cite{MorRS} to derive the symbol of $A$
at the border as an operator acting on Schrödinger waves. Let us consider
$A$ as an operator acting on $\partial\Omega\times\mathbb{R}$. Notice
that, because $\partial\Omega\times\mathbb{R}$ is nowhere characteristic
for the semi-classical Schrödinger flow, there exists an operator
$Q$ of order zero such that for any semi-classical Schrödinger wave
$v$
\begin{equation}
Av_{|\partial\Omega\times\mathbb{R}}=Q(\partial_{n}v).\label{eq:smooth_pn}
\end{equation}
Let $q$ be the symbol of this operator. Let $(x_{0},t_{0})\in\partial\Omega\times\mathbb{R}$
and $(\eta,\tau)\in T_{(x_{0},t_{0})}(\partial\Omega\times\mathbb{R})$.
We denote by $\psi_{\pm}$ the two distinct solutions of the Eikonal
equations
\begin{gather*}
|\nabla\psi(x)|^{2}=-\tau,\\
\psi_{\pm}(x)=x\cdot\eta\text{ on }\partial\mathcal{O},
\end{gather*}
that are well defined in a neighborhood of $x_{0}$ as soon as $\tau-\eta^{2}>0$:
indeed, extending $n$ in a small neighborhood of the border, one
can always take
\[
\psi_{\pm}=x\cdot\eta\pm\sqrt{\tau-\eta^{2}}n.
\]
For $\lambda>0$, consider, extending $\partial_{n}\psi_{\pm}$ in
a neighborhood of $x_{0}$ in $\Omega$
\[
v_{\lambda}=\frac{e^{i\lambda(\psi_{+}+t\tau)}-e^{i\lambda(\psi_{-}+t\tau)}}{(i\lambda)(\partial_{n}\psi_{+}-\partial_{n}\psi_{-})},
\]
which is solution of an approximate semi-classical Schrödinger equation
\begin{flalign*}
i\partial_{t}v_{\lambda}-\lambda^{-1}\Delta v_{\lambda} & =O(\lambda^{-1}),\\
v_{\lambda} & =0\ \text{\text{on }\ensuremath{\partial}\ensuremath{\mathcal{O}}}.
\end{flalign*}
verifying, in a neighborhood of $x_{0}$ in $\partial\mathcal{O}$
\[
\partial_{n}v_{\lambda}=e^{i\lambda(x\cdot\eta+t\tau)}.
\]
But, the principal symbol of $Q$ can be computed as
\begin{multline*}
q(x_{0},t_{0},\eta,\tau)=\lim_{\lambda\rightarrow\infty}e^{-i\lambda(x_{0}\cdot\eta+t_{0}\cdot\tau)}Q(e^{i\lambda(x\cdot\eta+t\tau)})(x_{0},t_{0})\\
=\lim_{\lambda\rightarrow\infty}e^{-i\lambda(x_{0}\cdot\eta+t_{0}\cdot\tau)}Q(\partial_{n}v_{\lambda})(x_{0},t_{0}).
\end{multline*}
By the Duhamel formula, the difference between $v_{\lambda}$ and
the solution of the actual equation $w_{\lambda}$ is bounded in a
neighborhood of $(x_{0},t_{0})$ by
\[
|w_{\lambda}-v_{\lambda}|\lesssim\lambda^{-1},
\]
therefore, we can replace $v_{\lambda}$ by $w_{\lambda}$, which
is an exact Schrödinger wave, in the limit and make use of (\ref{eq:smooth_pn})
to get:
\begin{gather*}
q(x_{0},t_{0},\eta,\tau)=\lim_{\lambda\rightarrow\infty}e^{-i\lambda(x_{0}\cdot\eta+t_{0}\cdot\tau)}Q(\partial_{n}w_{\lambda})(x_{0},t_{0})\\
=\lim_{\lambda\rightarrow\infty}e^{-i\lambda(x_{0}\cdot\eta+t_{0}\cdot\tau)}A(w_{\lambda})(x_{0},t_{0})=\lim_{\lambda\rightarrow\infty}e^{-i\lambda(x_{0}\cdot\eta+t_{0}\cdot\tau)}A(v_{\lambda})(x_{0},t_{0})\\
=\left(\frac{a(x,d\psi_{+})-a(x,d\psi_{-})}{2(\partial_{n}\psi_{+}-\partial_{n}\psi_{-})}\right)(x_{0},t_{0}).\\
\end{gather*}
And we conclude that, this computation been valid for $\tau-\eta^{2}>0$
\begin{gather}
q(x_{0},t_{0},\eta,\tau)=\left(\frac{a(x_{0},\xi_{+})-a(x_{0},\xi_{-})}{(\xi_{+}-\xi_{-})\cdot n(x_{0})}\right),\label{eq:smoothq1}\\
\xi_{\pm}=\eta\pm\sqrt{\tau-\eta^{2}}n(x_{0}).\label{eq:smoothq2}
\end{gather}
Notice that $\xi_{\pm}$ is a pair of reflected rays.

\subsection*{The escape function}

Let $(y,\eta)\notin\mathcal{K}$. The generalized broken ray starting
from $(y,\eta)$ is composed of a finite number of segments, thus,
the construction of \cite{MorRS}, Section 5, holds to construct a
ray function starting from $(y,\eta)$, that is, a function $p_{0}$
satisfying
\begin{gather*}
\xi\cdot\nabla p_{0}(x,\xi)\geq0,\ \frac{p_{0}(x,\xi)-p_{0}(x,\xi')}{(\xi-\xi^{'})\cdot n(x)}\geq0,
\end{gather*}
and
\[
\eta\cdot\nabla p_{0}(y,\eta)>0,\ \frac{p_{0}(y,\eta)-p_{0}(y,\eta')}{(\eta-\eta')\cdot n(y)}>0.
\]
 Therefore, by compactness, we can construct a function $a$ such
that
\begin{gather}
\xi\cdot\nabla a(x,\xi)\geq0,\frac{a(x,\xi)-a(x,\xi^{'})}{(\xi-\xi^{'})\cdot n(x)}\geq0\label{eq:smooth-a0-1}\\
\xi\cdot\nabla a(x,\xi)>0,\ \frac{a(x,\xi)-a(x,\xi^{'})}{(\xi-\xi^{'})\cdot n(x)}>0,\text{ on \ensuremath{V\supset\supset} supp \ensuremath{\phi}}.\label{eq:smooth-a0-2}
\end{gather}
Finally, notice that, because the construction of \cite{MorRS} follows
the rays and because the trapped set is invariant by the flow, we
can construct $a$ in such a way that
\begin{equation}
a=0\text{ near }\mathcal{K}.\label{eq:0-out}
\end{equation}
Remark that, as in \cite{MorRS}, Section 1, such an $a$ can be approximated
by a polynomial in order to justify the above integration by parts.

\subsection*{A first estimate}

Let $\delta>0$. Because of (\ref{eq:smooth-a0-1}), (\ref{eq:smoothq1}),
$q$ is real-valued and positive on $\{\tau-\eta^{2}\geq0\}$, therefore,
there exists $\epsilon>0$ small enough so that, on $\{\tau-\eta^{2}\geq-\epsilon\}$
we have, with the notations of (\ref{eq:smoothq1})
\begin{equation}
\Re e\frac{a(x_{0},\xi_{+})-a(x_{0},\xi_{-})}{(\xi_{+}-\xi_{-})\cdot n(x_{0})}\geq-\delta/2.\label{eq:a_bord_ok}
\end{equation}
and, for $|\alpha|\leq2(d+1)=8$
\begin{equation}
\big|\Im m\ \partial_{x,t,\xi,\tau}^{\alpha}\frac{a(x_{0},\xi_{+})-a(x_{0},\xi_{-})}{(\xi_{+}-\xi_{-})\cdot n(x_{0})}\big|\leq\delta/2\label{eq:a_bord_ok_IM}
\end{equation}
Now, let $\chi$ be positive and supported in $\{\tau-\eta^{2}\geq-2\epsilon\}$
and such that $\chi=1$ in $\{\tau-\eta^{2}\geq-\epsilon\}.$ We decompose
$a$ as the sum
\[
a=\chi a+(1-\chi)a.
\]
Note that $(1-\chi)a$ is supported away from the characteristic set
$\{\tau=\eta^{2}\}$ of the semi-classical Schrödinger flow. Therefore,
\[
\Vert\text{Op}_{h}((1-\chi)a)u\Vert_{H^{\sigma}(\mathbb{R}\times\Omega)}=O(h^{\infty})\Vert u_{0}\Vert_{L^{2}},
\]
and using a trace theorem
\[
B=\int_{0}^{T}\int_{\partial\Omega}\langle R(\partial_{n}(e^{it\Delta}u_{0})),\partial_{n}(e^{it\Delta}u_{0})\rangle d\sigma dt+O(h^{\infty})\Vert u_{0}\Vert_{L^{2}},
\]
where $R=\text{Op}(\chi a)$. Notice that a pair of reflected rays
share the same norm, therefore, by (\ref{eq:smoothq1}), the symbol
of $R$ is
\begin{gather*}
r(x_{0},t_{0},\eta,\tau)=\chi(\eta,\tau)\left(\frac{a(x_{0},\xi_{+}(\eta,\tau))-a(x_{0},\xi_{-}(\eta,\tau))}{(\xi_{+}(\eta,\tau)-\xi_{-}(\eta,\tau))\cdot n(x_{0})}\right),\\
\xi_{\pm}=\eta\pm\sqrt{\tau-\eta^{2}}n(x_{0}).
\end{gather*}
Therefore, by (\ref{eq:a_bord_ok}), (\ref{eq:smooth-a0-2}) and (\ref{eq:0-out}),
we can use the Garding inequality for the real part, the Calderon-Vaillancourt
theorem for the imaginary part in order to write
\begin{equation}
B\geq-\delta\int_{0}^{T}\int_{\partial\Omega}|\tilde{\Phi}u|^{2}d\sigma dt-c_{\text{Gard}}\Vert\chi_{b}u\Vert_{L^{2}([0,T],H^{-1/2}(\partial\Omega))}+O(h^{\infty})\Vert u_{0}\Vert_{L^{2}}.\label{eq:smot10}
\end{equation}
where $\tilde{\phi}\in C_{c}^{\infty}(\mathbb{R}^{3}\times\mathbb{R}^{3})$
is supported in $\mathcal{K}^{c}$ and $\tilde{\phi}=1$ on the support
of $\phi$, and $\chi_{b}\in C_{c}^{\infty}(\mathbb{R}^{3})$ is such
that $\chi_{b}=1$ on $\partial\Omega$. 

Moreover, by the same procedure as in \cite{MorRS}, we may suppose
that for $|x|\geq R\gg1$, $a$ is given by $a(x,\xi)=hx\cdot\xi$.
Let $\chi_{R}\in C_{c}^{\infty}$ be such that $\chi_{R}=1$ on $\left\{ |x|\leq2R\right\} $
and $\chi_{R}=0$ on $\left\{ |x|\geq3R\right\} $. We decompose 
\begin{multline*}
\int_{\Omega}\langle[i\Delta,A]u,u\rangle=\int_{\Omega}\langle[i\Delta,A]\chi_{R}u,\chi_{R}u\rangle\\
+\int_{\Omega}\langle[i\Delta,A]\chi_{R}u,(1-\chi_{R})u\rangle+\int_{\Omega}\langle[i\Delta,A](1-\chi_{R})u,\chi_{R}u\rangle\\
+\int_{\Omega}\langle[i\Delta,A](1-\chi_{R})u,(1-\chi_{R})u\rangle.
\end{multline*}
Because the commutator is truly non-negative for functions supported
in $\left\{ |x|\geq2R\right\} $, the last term is non-negative. Moreover,
the integrand of both intermediate terms are supported in $\left\{ 2R\leq|x|\leq3R\right\} $.
Therefore, taking $R$ large enough, the long-range smoothing estimate,
which is for example a consequence of the long-range resolvent estimate
of Cardoso and Vodev \cite{CardoVodev} by the procedure of \cite{MR2068304},
allows us to control them:
\begin{multline*}
\big|\int_{\mathbb{R}}\int_{\Omega}\langle[i\Delta,A]\chi_{R}u,(1-\chi_{R})u\rangle+\langle[i\Delta,A](1-\chi_{R})u,\chi_{R}u\rangle\big|\\
\lesssim\Vert\tilde{\chi}u\Vert_{L^{2}H^{1/2}}\lesssim\Vert u_{0}\Vert_{L^{2}},
\end{multline*}
where $\tilde{\chi}\in C_{c}^{\infty}$ is equal to one in $\left\{ 2R\leq|x|\leq3R\right\} $
and supported in $\left\{ |x|\geq R\right\} $. Finally, by the Garding
inequality again, using (\ref{eq:smooth-a0-2}):
\begin{equation}
\int_{0}^{T}\int_{\Omega}\langle[i\Delta,A]\chi_{R}u,\chi_{R}u\rangle\geq C\Vert\Phi u\Vert_{L^{2}H^{1/2}}-c_{\text{Gard}}\Vert\chi_{R}u\Vert_{L^{2}L^{2}},\label{smoth11}
\end{equation}
Thus, combining (\ref{eq:smooth_com}), (\ref{eq:smot10}), and (\ref{smoth11}),
using the trace theorem and controlling the lower order terms with
the estimate \textit{with logarithmic loss }we get
\begin{equation}
\Vert\Phi u\Vert_{L^{2}H^{1/2}}\leq C(\Vert u_{0}\Vert_{L^{2}}+\delta\Vert\tilde{\Phi}u\Vert_{L^{2}H^{1/2}})+C_{\delta}O(h^{\infty}).\label{eq:it}
\end{equation}

\subsection*{Iteration and conclusion}

To conclude, we would like to take $\delta>0$ small enough and iterate
(\ref{eq:it}). In order to do so, we have to take care of the potential
dependency in $\phi,\tilde{\phi},\tilde{\tilde{\phi}},\dots,\overset{\sim(k)}{\phi},\cdots$
and $\delta$ of the constants appearing in this estimate. Let us
first remark that we take all the $\overset{\sim(k)}{\phi}$ in a
given small neighborhood of the support of $\phi$ - this neighborhood
is a subset of $V$ of (\ref{eq:smooth-a0-2}). Thus, there exists
$A\geq1$ such that, for $|\alpha+\beta|\leq N$
\[
\Vert\partial_{x,\xi}^{\alpha,\beta}\overset{\sim(k)}{\phi}\Vert_{L^{\infty}}\leq A^{k}.
\]
 Therefore, the Garding constants $c_{\text{Gard}}$ in (\ref{eq:smot10}),
(\ref{smoth11}) at the $k$-th iteration can be taken as $A^{k}$.
Moreover, by (\ref{eq:smooth-a0-2}), $\xi\cdot\nabla a$ is bounded
below by a constant $C$ uniformly on the support of all the $\overset{\sim(k)}{\phi}$,
so we can choose the same constant $C$ in (\ref{smoth11}) at all
iteration. Finally, the $O(h^{\infty})$ term depends only of $\delta$.

Therefore, we can precise the constants in (\ref{eq:it}) at the $k$-th
iteration:
\[
\Vert\overset{\sim(k)}{\Phi}u\Vert_{L^{2}H^{1/2}}\leq(C+A^{k})\Vert u_{0}\Vert_{L^{2}}+C\delta\Vert\overset{\sim(k+1)}{\Phi}u\Vert_{L^{2}H^{1/2}}+C_{\delta}O(h^{\infty}),
\]
where $C$ and $A$ have no dependencies in $k$ and $\delta$ and
$C_{\delta}$ depends only of $\delta$. Thus we get
\begin{multline*}
\Vert\Phi u\Vert_{L^{2}H^{1/2}}\leq\left[C\frac{1-(C\delta)^{k+1}}{1-C\delta}+\frac{(C\delta A)-(C\delta A)^{k+1}}{1-C\delta A}\right]\Vert u_{0}\Vert_{L^{2}}\\
+(C\delta)^{k}\Vert\overset{\sim(k+1)}{\Phi}u\Vert_{L^{2}H^{1/2}}+C_{\delta}\frac{1-(C\delta)^{k+1}}{1-C\delta}O(h^{\infty})\\
\leq\left[C\frac{1-(C\delta)^{k+1}}{1-C\delta}+\frac{(C\delta A)-(C\delta A)^{k+1}}{1-C\delta A}\right]\Vert u_{0}\Vert_{L^{2}}\\
+(C\delta)^{k}\Vert\chi_{0}u\Vert_{L^{2}H^{1/2}}+C_{\delta}\frac{1-(C\delta)^{k+1}}{1-C\delta}O(h^{\infty})
\end{multline*}
where $\chi_{0}$ is compactly supported. We fix $\delta$ small enough
so that $C\delta A<1$ and let $k$ go to infinity to obtain the result.
\end{proof}
\begin{rem}
Notice that the exact same proof holds for any arbitrary domain for
which a smoothing estimate with logarithmic loss holds. Moreover,
as remarked by \cite{DatchevVasy}, we can iterate such a proof and
therefore it suffices to assume a smoothing estimate with polynomial
loss. More precisely, we initiate the argument controlling the lower
order terms by the smoothing estimate with polynomial loss, and then
iterate the proof and control the lower order terms by the previous
estimate at each step, until we reach $h^{0}$. Thus we obtain the
more general:
\end{rem}
\begin{prop}
\label{prop:dv}Let $\Omega$ be such that the following smoothing
estimate with polynomial loss holds: there exists $k>0$ such that
for all $\chi\in C_{c}^{\infty}(\mathbb{R}^{d})$ and all $u_{0}\in L^{2}$
such that $u_{0}=\psi(-h^{2}\Delta)u_{0}$, we have:
\[
\Vert\chi e^{-it\Delta_{D}}u_{0}\Vert_{L^{2}(\mathbb{R},H^{1/2}(\Omega))}\lesssim h^{-k}\Vert u_{0}\Vert_{L^{2}}.
\]
Then, a smoothing estimate without loss holds outside the trapped
set $\mathcal{K}$: that is, for all $\phi\in C_{c}^{\infty}(\mathbb{R}^{3}\times\mathbb{R}^{3})$
supported in $\mathcal{K}^{c}$, we have
\[
\Vert\text{Op}_{h}(\phi)e^{-it\Delta_{D}}u_{0}\Vert_{L^{2}(\mathbb{R},H^{1/2}(\Omega))}\lesssim\Vert u_{0}\Vert_{L^{2}}.
\]
\end{prop}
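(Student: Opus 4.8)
The plan is to replay the proof of Proposition~\ref{prop:smooth_wo} with progressively better inputs. That argument never used the precise strength of the logarithmic-loss estimate~(\ref{eq:smooth_logloss}): the commutator identity~(\ref{eq:smooth_com}), the escape-function construction of \cite{MorRS}, the Garding and Calderon--Vaillancourt inequalities, the trace theorem, and the unconditional (loss-free) long-range smoothing estimate of \cite{CardoVodev} together reduce~(\ref{eq:lswlnt}) to controlling error terms that are strictly lower order than the left-hand side and, crucially, microlocalized in $\mathcal{K}^{c}$ --- here one uses that $a\equiv 0$ near $\mathcal{K}$, cf.~(\ref{eq:0-out}), so the symbols of $[i\Delta,A]$, of $A$, and of the boundary operator $R$ all vanish near $\mathcal{K}$. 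Running everything up to~(\ref{eq:it}) verbatim, but \emph{without} absorbing the lower-order term by a smoothing estimate, and carrying that term along through the iteration in the cut-off $\phi$ --- where it is dominated, uniformly in the iteration index, by a single $\text{Op}_{h}(\rho)u$ with $\rho$ supported in $\mathcal{K}^{c}$ --- one obtains: for every $\phi\in C_{c}^{\infty}(\mathbb{R}^{3}\times\mathbb{R}^{3})$ supported in $\mathcal{K}^{c}$ there is $\rho\in C_{c}^{\infty}(\mathbb{R}^{3}\times\mathbb{R}^{3})$, supported in $\mathcal{K}^{c}$ and equal to $1$ near the support of $\phi$, such that
\[
\Vert\text{Op}_{h}(\phi)e^{-it\Delta_{D}}u_{0}\Vert_{L^{2}H^{1/2}}\lesssim_{\phi}\Vert u_{0}\Vert_{L^{2}}+\Vert\text{Op}_{h}(\rho)e^{-it\Delta_{D}}u_{0}\Vert_{L^{2}L^{2}}+C_{\phi}O(h^{\infty})\Vert u_{0}\Vert_{L^{2}}.
\]
Since $u_{0}=\psi(-h^{2}\Delta)u_{0}$, the error term is a factor $h^{1/2}$ better than the quantity being estimated, that is $\Vert\text{Op}_{h}(\rho)u\Vert_{L^{2}L^{2}}\sim h^{1/2}\Vert\text{Op}_{h}(\rho)u\Vert_{L^{2}H^{1/2}}$.

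The gain in the exponent is then harvested by induction: I would prove that for every $j\in\mathbb{N}$ and every $\phi\in C_{c}^{\infty}(\mathbb{R}^{3}\times\mathbb{R}^{3})$ supported in $\mathcal{K}^{c}$,
\[
\Vert\text{Op}_{h}(\phi)e^{-it\Delta_{D}}u_{0}\Vert_{L^{2}H^{1/2}}\lesssim_{\phi}h^{-m_{j}}\Vert u_{0}\Vert_{L^{2}},\qquad m_{j}=\max\big(k-\tfrac{j}{2},\,0\big).
\]
For $j=0$ this is the hypothesis: writing $\text{Op}_{h}(\phi)u=\text{Op}_{h}(\phi)\chi u+O(h^{\infty})$ with $\chi\in C_{c}^{\infty}(\mathbb{R}^{3})$ equal to $1$ on the spatial projection of the support of $\phi$, one uses the $H^{1/2}$-boundedness of $\text{Op}_{h}(\phi)$ and the assumed bound $\Vert\chi u\Vert_{L^{2}H^{1/2}}\lesssim h^{-k}\Vert u_{0}\Vert_{L^{2}}$. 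For the inductive step, feed the displayed estimate the inductive bound for $\text{Op}_{h}(\rho)u$ --- legitimate because $\rho$ is supported in $\mathcal{K}^{c}$ --- to get $\Vert\text{Op}_{h}(\rho)u\Vert_{L^{2}L^{2}}\sim h^{1/2}\Vert\text{Op}_{h}(\rho)u\Vert_{L^{2}H^{1/2}}\lesssim_{\phi}h^{1/2-m_{j}}\Vert u_{0}\Vert_{L^{2}}$, whence $\Vert\text{Op}_{h}(\phi)u\Vert_{L^{2}H^{1/2}}\lesssim_{\phi}(1+h^{1/2-m_{j}})\Vert u_{0}\Vert_{L^{2}}+C_{\phi}O(h^{\infty})\Vert u_{0}\Vert_{L^{2}}\lesssim_{\phi}h^{-m_{j+1}}\Vert u_{0}\Vert_{L^{2}}$, since $m_{j+1}=\max(m_{j}-\tfrac{1}{2},0)$ and $O(h^{\infty})$ is absorbed into $h^{-m_{j+1}}$. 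Taking any integer $j\geq 2k$ gives $m_{j}=0$, which is~(\ref{eq:lswlnt}).

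The point demanding the most care --- and the only genuine departure from the proof of Proposition~\ref{prop:smooth_wo} --- is the microlocalization of the error terms in $\mathcal{K}^{c}$, and especially of the \emph{boundary} error term. In Proposition~\ref{prop:smooth_wo} it appeared as $c_{\text{Gard}}\Vert\chi_{b}u\Vert_{L^{2}([0,T],H^{-1/2}(\partial\Omega))}$ with $\chi_{b}$ merely a spatial cut-off on $\partial\Omega$; this was harmless there because~(\ref{eq:smooth_logloss}) holds everywhere and $(h|\log h|)^{1/2}=o(1)$, but with only a polynomial loss --- which does \emph{not} hold loss-free near $\mathcal{K}$ --- the iteration would stall unless the error is honestly supported away from $\mathcal{K}$. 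The plan is to exploit that $a\equiv 0$ near $\mathcal{K}$ forces the symbol $r$ of $R$ to vanish near $\mathcal{K}\cap T^{\star}\partial\Omega$, and then to convert the resulting boundary quantity into an interior one, microlocalized in $\mathcal{K}^{c}$, using --- exactly as in~(\ref{eq:smooth_pn}) --- that $\partial\Omega\times\mathbb{R}$ is nowhere characteristic for the semiclassical Schrodinger flow: this rewrites it, modulo $O(h^{\infty})$, in terms of $\text{Op}_{h}(\tilde{\rho})u$ with $\tilde{\rho}$ supported in $\mathcal{K}^{c}$. Once this localization is secured the remainder is routine: the inner iteration in $\phi$ is verbatim that of Proposition~\ref{prop:smooth_wo} (its Garding constants $A^{k}$ and the attendant geometric series cause no trouble once $\delta$ is fixed small enough), and since only finitely many --- of order $2k$ --- outer steps are performed, all constants remain $h$-uniform.
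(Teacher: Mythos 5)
Your proposal is correct and follows essentially the same route as the paper: the paper's own argument (given only as the preceding remark, in the spirit of Datchev--Vasy) is precisely to rerun the commutator/escape-function proof of Proposition \ref{prop:smooth_wo}, control the lower-order terms at the first pass by the polynomially lossy smoothing estimate, and then iterate finitely many times, each pass gaining a power of $h$ from the weaker norm of the error terms until the loss $h^{-k}$ is exhausted. Your additional observation --- that the Garding and boundary remainders must be microlocalized in $\mathcal{K}^{c}$ (exploiting $a\equiv0$ near $\mathcal{K}$, and near the boundary that $\mathcal{K}$ is invariant under reflection so the symbol $r$ vanishes there) so that the induction can be fed the improved microlocal estimate rather than stalling at $h^{1/2-k}$ --- is exactly the detail the paper's sketch leaves implicit, and your bookkeeping ($h^{1/2}$ gain per step, about $2k$ steps, constants uniform in $h$ since only finitely many outer iterations are performed) matches the intended scheme.
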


\section{Reduction to the logarithmic trapped set}

Because of Proposition \propref{smooth_logloss} and Proposition \propref{smooth_wo},
the exact same proof as in \cite{Schreodinger}, section 2, show that
the following proposition implies our main result for the Schrödinger
equation:
\begin{prop}[Strichartz estimates on a logarithmic interval near the trapped set]
\label{thm:main-1}\label{prop:semilog}There exists $\epsilon>0$
such that for all $\phi\in C_{c}^{\infty}(\mathbb{R}^{3}\times\mathbb{R}^{3})$
supported in a small enough neighborhood of $\mathcal{K}\cap\left\{ |\xi|\in[\alpha_{0},\beta_{0}]\right\} $,
we have
\begin{equation}
\Vert\text{Op}_{h}(\phi)e^{-it\Delta_{D}}\psi(-h^{2}\Delta)u_{0}\Vert_{L^{p}(0,\epsilon h|\log h|)L^{q}(\Omega)}\leq C\Vert u_{0}\Vert_{L^{2}}.\label{eq:butult}
\end{equation}
\end{prop}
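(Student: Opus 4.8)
The plan is to reduce the problem, by a microlocal decomposition, to data supported near a single periodic ray, construct an explicit approximate solution there by a sum over reflection histories, and then show that Ikawa's strong-hyperbolicity assumption \eqref{eq:IK1} makes that sum converge with a bound uniform on the logarithmic interval. More precisely, I would first observe that, by propagation of singularities for the billiard flow, a datum $\mathrm{Op}_h(\phi)\psi(-h^2\Delta)u_0$ with $\phi$ supported in a sufficiently small neighbourhood of $\mathcal K\cap\{|\xi|\in[\alpha_0,\beta_0]\}$ gives rise to a solution which, up to an $O(h^\infty)$ error in $L^2$ and hence in the relevant Strichartz norm over the time window of length $\epsilon h|\log h|$, remains microlocalized in a small neighbourhood of $\mathcal K$ for $|t|\le \epsilon h|\log h|$ provided $\epsilon$ is small relative to the minimal expansion rate; this is exactly the "logarithmic trapped set" heuristic, and it lets me replace $e^{-it\Delta_D}$ acting on this datum by the Melrose–Taylor / Ikawa parametrix near the obstacles. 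Here I would invoke the machinery of \cite{Ikawa2,IkawaMult,MR1254820} as adapted in \cite{Waves,Schreodinger}: near the trapped set the flow is a finite composition of the individual reflection maps for the convex bodies $\Theta_i$, each of which is handled by the exterior-of-one-convex-obstacle theory.

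Next I would build the approximate solution as a superposition over the set $\mathcal I$ of reflection stories: $u \approx \sum_{J\in\mathcal I} u_J$, where $u_J$ is the piece that has undergone the sequence of reflections $J=(j_1,\dots,j_k)$ off the obstacles. For each single reflection one has the exterior-of-one-strictly-convex-obstacle dispersive/Strichartz estimate without loss (the $d=3$ case of \cite{SS95}, \cite{MR2672795}), so that each $u_J$ individually satisfies $\|u_J\|_{L^pL^q}\lesssim \|\text{(its emitted data)}\|_{L^2}$; the point is to track how the $L^2$-mass of the emitted data decays along the story $J$. This is where the Poincaré map eigenvalues enter: passing once around (or through) the configuration contracts transverse directions by the factors $\mu_\gamma,\mu'_\gamma$, so a story of length $k$ contributes an amplitude roughly of size $\prod$ of the square roots of these contraction factors, i.e. comparable to $\lambda_\gamma^{(\text{something like }k)}$, while the number of stories of a given length grows at most exponentially (it is controlled by the combinatorics of admissible words in $\llbracket 1,N\rrbracket$ with $j_i\neq j_{i+1}$, together with the lengths $d_\gamma$). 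Summing $\|u_J\|_{L^pL^q}$ over all $J$ of total length up to $\sim |\log h|$ then produces a geometric-type series $\sum_{\gamma}(\#\text{words})\,\lambda_\gamma^{c|\log h|}\cdots$, and I would bound it by $\sum_{\gamma\in\mathcal P}\lambda_\gamma d_\gamma e^{\alpha d_\gamma}<\infty$ after using that on a time window of length $\epsilon h|\log h|$ only finitely many (polynomially-in-$h^{-1}$ many) stories are "active", with the exponential decay from $\lambda_\gamma$ beating the exponential growth of the word count precisely because \eqref{eq:IK1} holds; condition \eqref{eq:IK2} is used so that each $u_J$ is a genuine solution near the obstacles and no obstacle lies in the geometric shadow of the convex hull of two others, so the one-obstacle parametrices glue cleanly.

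The main obstacle, as in the two-obstacle papers, is making the sum over reflection stories converge \emph{uniformly} in $h$ over the logarithmic time interval: one must show that the loss coming from the $|\log h|$ length of the interval (which in the two-obstacle case was a single geometric ratio raised to the power $|\log h|$, tuned against the smoothing loss) is, in the $N$-obstacle case, exactly absorbed by the strong-hyperbolicity sum \eqref{eq:IK1}. Concretely, the delicate point is the bookkeeping: relating the product of contraction eigenvalues along an arbitrary admissible word to the primitive data $(\lambda_\gamma, d_\gamma)$, controlling cross terms $\langle u_J, u_{J'}\rangle$ (almost-orthogonality of the different pieces, which follows because distinct stories separate in phase space), and checking that the constant $\epsilon$ can be chosen once and for all — small enough that the microlocalization-near-$\mathcal K$ step is valid and that the geometric series has ratio $<1$ — independently of $h$ and of which piece of $\mathcal K$ the support of $\phi$ sits near. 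I would then conclude exactly as in \cite{Schreodinger}: the approximate solution satisfies \eqref{eq:butult}, the Duhamel error is $O(h^{1-})$ in $L^2$ hence negligible after the smoothing estimates of the previous section are used to upgrade it, and Proposition \ref{prop:semilog} follows.
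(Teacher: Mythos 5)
Your outline follows the same broad strategy as the paper (reduce to data that stay near the trapped set in logarithmic time, expand the evolution as an Ikawa-type sum over reflection stories, and use the strong hyperbolicity condition (\ref{eq:IK1}) to beat the exponential growth of the number of stories), but two of your key steps have genuine gaps. First, your opening reduction is wrong as stated: for data microlocalized near $\mathcal{K}\cap\{|\xi|\in[\alpha_0,\beta_0]\}$, the solution does \emph{not} remain microsupported in a small neighbourhood of $\mathcal{K}$ up to $O(h^\infty)$ on the whole window $|t|\le\epsilon h|\log h|$ — the trapped set is hyperbolic, so a non-negligible part of the mass escapes precisely on this (Ehrenfest-type) time scale; this is why the smoothing estimate near $\mathcal{K}$ carries a logarithmic loss at all. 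The paper avoids this by first applying a $TT^\star$ argument to reduce (\ref{eq:butult}) to the two-sided-localized dispersive estimate (\ref{eq:pointdisp}), and then runs an iterated microlocal partition of unity along the flow: pieces that enter the escape region never return to $\operatorname{Supp}\phi$, so the corresponding terms of $Ae^{iTh\Delta}\psi(-h^2\Delta)A^\star$ are $O(h^\infty)$, and one is left only with stories following a single primitive trajectory (Lemma \ref{lem:redtrap-ult}). Without the $TT^\star$ step, your one-sided confinement claim cannot be repaired.

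Second, the quantitative heart of the argument is missing. You propose to bound each story piece $u_J$ by the exterior-of-one-convex-obstacle Strichartz estimates and to track an ``$L^2$-mass decay'' of the emitted data, but the pieces $w^J$ are defined by an inhomogeneous boundary condition (the trace of the previous piece on $\partial\Theta_{j_n}$, as in (\ref{eq:wJ})), not as free solutions outside one obstacle, so those theorems do not apply to them directly; and the decay that actually makes the $J$-sum converge is not an $L^2$ bookkeeping but a pointwise amplitude decay $|w_k^J|\lesssim\lambda_I^{|J|}h^{-(2k+m)c\epsilon}$ coming from Ikawa's convergence of products of Gaussian curvatures (Proposition \ref{prop:convL}, Proposition \ref{prop:essbouds}), combined with a stationary phase in $\xi$ (Lemma \ref{lem:nondeg}) to produce the $(ht)^{-3/2}$ dispersive factor needed for $TT^\star$. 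Your ``delicate bookkeeping'' paragraph correctly identifies where the difficulty lies, but identifying it is not supplying it: relating the contraction along an arbitrary admissible word to $(\lambda_\gamma,d_\gamma)$ and then summing via (\ref{eq:IK1}) is exactly the content of the estimates (\ref{eq:wkmaj1})--(\ref{eq:wxp1}) in the paper, and the almost-orthogonality of cross terms you invoke is neither needed nor used there (the pieces are summed pointwise after stationary phase, their time supports being essentially disjoint by Lemma \ref{lem:support}). Likewise, the remainder of the parametrix is closed in $L^\infty$ by taking a high but fixed order $K$ and using the $N^t$ count of active stories, not by ``upgrading a Duhamel error with the smoothing estimates'' as you suggest.
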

Notice that, by a classical $TT^{\star}$ argument, Proposition \propref{semilog}
is a consequence of the following pointwise dispersive estimate:
\begin{equation}
\Vert Ae^{ith\Delta}\psi(-h^{2}\Delta)A^{\star}\Vert_{L^{1}\rightarrow L^{\infty}}\lesssim(ht)^{-3/2},\ \forall0\leq t\leq\epsilon|\log h|,\label{eq:pointdisp}
\end{equation}
where we denoted, here and in the sequel of this section
\[
A:=\text{Op}_{h}(\phi)
\]
in the seek of readability. 

Thus, the rest of the paper will be devoted to prove such an estimate.
The aim of this section is to show that we can reduce ourselves to
data micro-locally supported to the points that remain near the trapped
trajectories in logarithmic times. In order to do so, we first need
to generalizes some properties of the billiard flow shown in \cite{Schreodinger}: 

\subsection{Regularity of the billiard flow}

We first need the following lemma, where we denoted by $W_{\text{tan},\eta}$
an $\eta$-neighborhood of the tangent rays:
\begin{lem}
\label{lem:2cross}There exists $\eta>0$ such that any ray cannot
cross $W_{\text{tan},\eta}$ more than twice.
\end{lem}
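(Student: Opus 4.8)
The plan is to exploit strict convexity of the obstacles, which forces any billiard trajectory to turn by a definite angle at each reflection, together with the fact that a ray passing $\eta$-close to a tangent ray of some obstacle $\Theta_i$ is, roughly speaking, a ray that grazes $\Theta_i$. Fix $\eta_0>0$ small, to be chosen at the end. I would first set up the quantitative dictionary: if a unit-speed ray segment passes within distance $\eta$ (in phase space) of a ray tangent to $\Theta_i$, then either it misses $\Theta_i$ and passes through a thin ``shadow slab'' alongside $\partial\Theta_i$ of width $O(\eta)$, or it hits $\partial\Theta_i$ at a point whose inward normal makes an angle $O(\sqrt{\eta})$ with the ray direction (the square root coming from the local parabolic shape of a strictly convex boundary). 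Call such a near-tangency a \emph{grazing event at $\Theta_i$}.

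The core of the argument is then a geometric exclusion: after a grazing event at $\Theta_i$, the ray continues in a direction that is $O(\sqrt{\eta})$-close to the tangent plane of $\Theta_i$ at the grazing point, hence it travels within the half-space on the outer side of that tangent plane and moves away from $\Theta_i$ at a rate bounded below (using strict convexity: the obstacle curves away from its tangent plane). Consequently it cannot return to a second grazing event at the \emph{same} obstacle $\Theta_i$: a second near-tangency to $\Theta_i$ would require the ray to come back $\eta$-close to $\partial\Theta_i$ from the outside after having departed, which by the convexity quantitative estimate needs a deflection larger than what the intervening reflections off the \emph{other} obstacles can provide when $\eta$ is small — here I would use that between the two putative grazing events at $\Theta_i$ the ray reflects only off $\Theta_j$, $j\neq i$, and by Ikawa's condition \eqref{eq:IK2} (no obstacle in shadow) the convex hull of any two of those obstacles misses $\Theta_i$, so the ray is pushed consistently to one side. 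This rules out two grazing events at one obstacle. It remains to rule out grazing events at \emph{two different} obstacles $\Theta_i$ and $\Theta_j$ followed by a third at $\Theta_k$ (or $\Theta_i$ again): a near-tangent incidence to $\Theta_i$ leaves the ray nearly in $\Theta_i$'s tangent plane, and then reaching near-tangency to $\Theta_j$ forces the configuration of $(\Theta_i,\Theta_j)$ to be itself nearly degenerate, which for fixed obstacles is impossible once $\eta$ is below a threshold depending only on the finite geometry — the finitely many pairs give a uniform $\eta$. The bookkeeping showing ``two grazing events in total'' uses that $W_{\mathrm{tan},\eta}$ is contained in the union over $i$ of the individual grazing regions of $\Theta_i$, which are pairwise separated in phase space for small $\eta$ since the obstacles are disjoint compact sets.

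Concretely the steps are: (1) reduce to unit speed and to a single ray by homogeneity; (2) prove the local lemma relating an $\eta$-near-tangency to $\Theta_i$ to an $O(\sqrt\eta)$ angular bound and an $O(\eta)$ transverse distance, using a normal-coordinate chart near $\partial\Theta_i$ and the positive lower bound on the second fundamental form; (3) prove the monotone-escape lemma: from a grazing point, the distance to $\Theta_i$ along the continuing segment is strictly increasing until the next reflection, with quantitative rate, and this persists across reflections off other obstacles by \eqref{eq:IK2}; (4) combine (2)--(3) to get: at most one grazing event per obstacle; (5) a compactness/finite-geometry argument over the finitely many ordered pairs $(\Theta_i,\Theta_j)$ to exclude a grazing chain of length $\ge 3$; (6) choose $\eta$ smaller than the minimum of the finitely many thresholds from (4)--(5) and conclude. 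The main obstacle I anticipate is step (3)--(5): making the ``pushed to one side'' intuition into an honest quantitative estimate that survives an arbitrary (possibly long) string of non-grazing reflections between two grazing events, and in particular controlling how the direction can drift over that string. I expect \eqref{eq:IK2} to be exactly the hypothesis that keeps this drift one-signed, so that the escape from $\Theta_i$ is not undone; the degenerate case explicitly excepted in the remark after \eqref{eq:IK2} (a periodic trajectory tangent to an obstacle) is presumably what forces us to keep $\eta$ strictly positive rather than sending it to $0$.
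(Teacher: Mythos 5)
There is a genuine gap, and it sits exactly where you anticipated it: steps (3)--(5). The ``monotone-escape'' claim is false as stated: after a grazing event at $\Theta_i$, an intervening reflection off some $\Theta_j$ can perfectly well send the ray back toward $\Theta_i$ (think of a near head-on reflection), and condition (\ref{eq:IK2}) does not forbid this --- it is a constraint on triples of \emph{pairwise distinct} obstacles and says nothing about a ray bouncing between two obstacles, so it cannot keep the drift ``one-signed''. In fact ``at most one grazing event per obstacle'' (your step (4)) is not true in general: a ray can be tangent to $\Theta_i$, reflect off $\Theta_j$, and be tangent to $\Theta_i$ again; this is compatible with the lemma, which only forbids a \emph{third} crossing. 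Step (5) is also unsound: near-tangency to two different obstacles along one trajectory does not force any degeneracy of the pair $(\Theta_i,\Theta_j)$ --- two disjoint convex bodies always admit common tangent lines, so bitangent rays exist for every configuration. The actual obstruction is never identified in your plan: the relevant consequence of (\ref{eq:IK2}) is that if a ray is tangent to an obstacle at a point, then of the two half-rays issued from that point (forward and backward), at least one meets no obstacle at all; a ray with \emph{three} tangencies violates this at the \emph{middle} tangency point, since both half-rays from it must reach the first and third tangency obstacles.

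You are also working much harder than necessary on the quantitative side. The paper does not prove any $O(\sqrt{\eta})$ grazing estimates: it argues by contradiction and compactness. If no $\eta$ works, pick rays crossing $W_{\mathrm{tan},1/n}$ at least three times, extract a convergent subsequence of initial data in a compact set, and pass to the limit to produce a single ray tangent to $\cup\Theta_i$ at three points; then the (\ref{eq:IK2})-based observation above rules such a ray out. So the whole quantitative apparatus (parabolic approximation, escape rates, per-pair thresholds) can be discarded, and the proof reduces to the qualitative three-tangency exclusion plus a limiting argument. If you want to salvage your route, you would need to replace (3)--(5) by the half-ray observation; as written, the proposal does not prove the lemma.
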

\begin{proof}
If it is not the case, for all $n\geq0$, there exists $(x_{n},\xi_{n})\in K\times\mathcal{S}^{2}$,
where $K$ is a compact set strictly containing the obstacles, such
that $\Phi_{t}(x_{n},\xi_{n})$ cross $W_{\text{tan},\frac{1}{n}}$
at least three times. Extracting from $(x_{n},\xi_{n})$ a converging
subsequence, by continuity of the flow, letting $n$ going to infinity
we obtain a ray that is tangent to $\cup\Theta_{i}$ in at least points.
Therefore, it suffices to show that such a ray cannot exists.

Remark that, because of the non-shadows condition (\ref{eq:IK2}),
if $(x,\xi)\in W_{\text{tan}}$, if we consider the ray starting from
$(x,\xi)$ and the ray starting from $(x,-\xi)$, one of the two do
not cross any obstacle in positive times. But, if there is a ray tangent
to the obstacles in at least three points, if we consider the second
tangent point $(x_{0},\xi_{0})$, both rays starting from $(x_{0},\xi_{0})$
and $(x_{0},-\xi_{0})$ have to cross an obstacle, therefore, this
is not possible.
\end{proof}
Together with lemma 3.2 of \cite{Schreodinger}, which gives the (Hölder)
regularity of the billiard flow near tangent points for a domain with
no infinite order of contact points, we obtain, with the exact same
proof as in this previous paper - the only assumption made been which
given by \lemref{2cross}:
\begin{lem}
\label{lem:distm}Let $V$ be a bounded open set containing the convex
hull of $\cup\Theta_{i}$. Then, there exists $\mu>0$, $C>0$ and
$\tau>0$ such that, for all $x,\tilde{x}\in V$, all $\xi,\tilde{\xi}$
such that $|\xi|,|\xi'|\in[\alpha_{0},\beta_{0}]$, for all $t>0$
there exists $t'$ verifying $|t'-t|\leq$ $\tau$ such that 
\begin{equation}
d(\varPhi_{t'}(\tilde{x},\tilde{\xi}),\varPhi_{t'}(x,\xi)))\leq C^{t'}d((\tilde{x},\tilde{\xi}),(x,\xi))^{\mu}.\label{eq:div}
\end{equation}
\end{lem}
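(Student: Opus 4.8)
The plan is to combine Lemma~\ref{lem:2cross} with the interior regularity of the billiard flow (the statement that away from tangent rays the flow is smooth, being a finite composition of free transport and reflection maps) and with Lemma~3.2 of \cite{Schreodinger}, which provides the H\"older estimate across a single tangent point for a domain without infinite-order contact. The key observation, exactly as in \cite{Schreodinger}, is that the strict convexity of the $\Theta_i$ plus Lemma~\ref{lem:2cross} means that on any fixed time interval a ray issued from $V$ with $|\xi|\in[\alpha_0,\beta_0]$ has only \emph{finitely many and uniformly bounded} dangerous events, so the H\"older defect $\mu$ accumulates only a bounded number of times and can be taken uniform.

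First I would fix $V$ and a compact set $K$ strictly containing $\bigcup\Theta_i$ and note that any ray issued from $V$ either escapes $K$ in finite time (after which the flow is just free transport, hence $1$-Lipschitz, contributing nothing to the degradation), or it keeps reflecting. Using strict convexity, between two consecutive reflections that are both far from tangency the flow is smooth with derivative bounded by $C^{t}$ for a fixed $C>1$ (each reflection contributes a bounded factor, and the number of reflections in time $t$ is $O(t)$); this is where the exponential $C^{t'}$ in \eqref{eq:div} comes from. The only places where smoothness fails are the passages through $W_{\mathrm{tan},\eta}$, and by Lemma~\ref{lem:2cross} there are at most two of these along any single ray. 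Near each such passage I would invoke Lemma~3.2 of \cite{Schreodinger} to get $d(\Phi_s(\tilde x,\tilde\xi),\Phi_s(x,\xi))\lesssim d(\cdot,\cdot)^{\mu_0}$ for the local exponent $\mu_0\in(0,1]$ furnished there; composing the (at most two) such estimates with the Lipschitz-in-exponential-time pieces before, between and after them yields a global bound with exponent $\mu=\mu_0^2$ (or $\mu_0$ if at most one tangency occurs) and constant $C^{t}$.

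The role of the perturbed time $t'$ with $|t'-t|\le\tau$ is the standard one: at a grazing ray the ``time of reflection'' is not a continuous function of the initial data, so one cannot compare $\Phi_t$ of two nearby points at exactly the same time; instead one slides to a nearby time $t'$ at which both trajectories are in a regime (either both genuinely reflected, or both missing the obstacle) where the comparison is stable, and the amount of sliding needed is controlled by the diameter of the obstacles and the speed, hence by a fixed $\tau$. I would make this precise exactly as in \cite{Schreodinger}, since the argument there uses \emph{only} the finiteness statement now supplied by Lemma~\ref{lem:2cross}, and otherwise nothing about the two-obstacle geometry.

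The main obstacle — and the reason Lemma~\ref{lem:2cross} is isolated as a separate step — is precisely guaranteeing that the number of tangential passages is bounded \emph{independently of the ray and of $t$}; without such a bound the H\"older exponent would degrade to $\mu_0^{N(t)}\to 0$ and \eqref{eq:div} would be useless. Once Lemma~\ref{lem:2cross} is in hand, the rest is a routine, if bookkeeping-heavy, composition of a bounded number of H\"older maps with exponentially-Lipschitz maps, and a compactness argument over $(x,\xi)\in \overline V\times\{|\xi|\in[\alpha_0,\beta_0]\}$ to make $\mu$, $C$ and $\tau$ uniform; I would not reproduce those computations, referring instead to the identical argument in \cite{Schreodinger}.
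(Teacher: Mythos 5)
Your proposal follows essentially the same route as the paper: the paper's own ``proof'' consists precisely in observing that Lemma~\ref{lem:2cross} supplies the only geometric input (a uniform bound on passages through $W_{\text{tan},\eta}$) needed to run the argument of Lemma~3.2 of the two-obstacle paper verbatim, which is exactly the structure you describe (bounded accumulation of the H\"older defect at tangencies, exponential Lipschitz control elsewhere, and the time shift $t'$ to handle the grazing discontinuity). Your elaboration of the bookkeeping is consistent with that reference, so the proposal is correct.
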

\begin{rem}
It is crucial, in the proof of this previous lemma, that a ray cannot
cross $W_{\text{tan},\eta}$ infinitely many times: indeed, regularity
is lost at each tangent point. Therefore, in the case which does not
enters the framework of \nameref{eq:IK2} of a trapped ray which is
tangent to an obstacle, this proof does not hold, and we do not know
if such a regularity of the flow is true. As this regularity is crucial
in the sequel, we think that this ``non shadow'' condition may not
be only technical, at least in the degenerated situation previously
mentioned.
\end{rem}
Finally, let us remark that
\begin{lem}
\label{lem:goesunif-1}Let $\delta>0$ and $D_{\delta}$ be a $\delta$-neighborhood
of $\text{\ensuremath{\mathcal{P}}}$. Then, for all compact $K$,
$\Phi_{t}(\rho)\longrightarrow\infty$ as $t\longrightarrow\pm\infty$
uniformly with respect to $\rho\in K\cap D_{\delta}^{c}$.
\end{lem}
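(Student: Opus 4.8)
The plan is a soft compactness-and-contradiction argument, whose point is that the only obstruction to an orbit leaving a fixed ball within a \emph{uniformly} bounded time is that it limits, along a subsequence, onto a genuinely trapped orbit; but such an orbit lies in $\mathcal{K}$, hence inside $D_\delta$, which is excluded by hypothesis.

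First I would restate the claim quantitatively: it suffices to prove that for every compact $K$ and every $R>0$ there is $T=T(K,R,\delta)$ such that each $\rho\in K\cap D_\delta^c$ satisfies $|X_t(\rho)|\ge R$ for some $t$ with $|t|\le T$. (We may assume $K\subset\{\,|\xi|\in[\alpha_0,\beta_0]\,\}$, so that \lemref{distm} applies and speeds along trajectories are $\le\beta_0$.) Here $D_\delta$ is open, so $K\cap D_\delta^c$ is compact; and since $\mathcal{K}$ is the union of the periodic trajectories, or at worst their closure, $\mathcal{K}\subset D_\delta$, so every $\rho\in K\cap D_\delta^c$ is non-trapped, $\sup_{t\in\mathbb{R}}|X_t(\rho)|=\infty$.

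Now suppose the quantitative claim fails. Then there are $R_0>0$, points $\rho_n\in K\cap D_\delta^c$ and $N_n\to\infty$ with $X_t(\rho_n)\in B(0,R_0)$ for all $|t|\le N_n$; by compactness $\rho_n\to\rho_*\in K\cap D_\delta^c$. Fix an arbitrary $t_0>0$ and feed the pairs $(\rho_n,\rho_*)$ and the time $t_0$ into \lemref{distm}: this produces times $t_0^{(n)}$ with $|t_0^{(n)}-t_0|\le\tau$ and $d(\Phi_{t_0^{(n)}}(\rho_n),\Phi_{t_0^{(n)}}(\rho_*))\le C^{t_0+\tau}d(\rho_n,\rho_*)^\mu\to 0$. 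For $n$ large $|t_0^{(n)}|<N_n$, hence $X_{t_0^{(n)}}(\rho_n)\in B(0,R_0)$; passing to a subsequence along which $t_0^{(n)}\to s\in[t_0-\tau,t_0+\tau]$, continuity of the single curve $t\mapsto X_t(\rho_*)$ gives $X_s(\rho_*)\in\overline{B(0,R_0)}$, and since that curve moves at speed $\le\beta_0$ we get $|X_{t_0}(\rho_*)|\le R_0+\beta_0\tau$; the same bound holds for $t_0<0$ by reversibility of the billiard flow. As $t_0$ was arbitrary, $\sup_t|X_t(\rho_*)|<\infty$, i.e. $\rho_*\in\mathcal{K}\subset D_\delta$ — contradicting $\rho_*\in D_\delta^c$.

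The hard part is exactly this passage to the limit, and it is why \lemref{2cross} and \lemref{distm} are invoked: the flow $\rho\mapsto\Phi_t(\rho)$ is not continuous at glancing rays, so the escape-time function is not continuous and one cannot simply take the limit in $X_t(\rho_n)\to X_t(\rho_*)$ at the fixed time $t_0$. \lemref{distm} repairs this by trading $t_0$ for a nearby time $t_0^{(n)}$ at which the Hölder estimate on the flow does hold — a property ultimately resting on \lemref{2cross}, the finiteness of the number of tangencies along a ray — and the residual time-slack $\tau$ is absorbed using only the elementary Lipschitz-in-time continuity of the single limiting trajectory $X_\cdot(\rho_*)$. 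Reading the conclusion back, the uniform escape just obtained is precisely the statement that $\Phi_t(\rho)\to\infty$ as $t\to\pm\infty$, uniformly over $\rho\in K\cap D_\delta^c$.
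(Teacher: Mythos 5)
Your core argument is the same compactness-and-contradiction scheme as the paper's: take a sequence of points of $K\cap D_\delta^c$ whose orbits stay confined for longer and longer times, extract a convergent subsequence, pass to the limit along the flow, and contradict the fact that the limit point, being in $D_\delta^c$, cannot be trapped. Your handling of the limit step via \lemref{distm} (trading the fixed time $t_0$ for nearby times $t_0^{(n)}$ where the H\"older estimate applies, then absorbing the slack $\tau$ using only the Lipschitz-in-time continuity of the single limiting trajectory) is in fact more detailed than the paper's, which simply asserts that the limiting orbit spends infinite length in $K$.

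There is, however, a genuine gap in your reduction. What you actually prove --- for every $R$ there is $T$ such that every $\rho\in K\cap D_\delta^c$ satisfies $|X_t(\rho)|\ge R$ for \emph{some} $|t|\le T$ --- is strictly weaker than the lemma's conclusion, which asks for $|X_t(\rho)|\ge R$ for \emph{all} $|t|\ge T$, uniformly in $\rho$. Since the spatial projection of any orbit meets a large ball $B(0,R)\supset \pi_x K\cup\mathrm{Conv}(\cup\Theta_i)$ in a single time interval containing $0$, your statement only forces one endpoint of that confinement interval into $[-T,T]$; the opposite time direction is left uncontrolled (a point that escapes immediately backward but stays in $B(0,R_0)$ forward for an arbitrarily long time satisfies your quantitative claim while violating uniform forward escape). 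The reduction the paper makes is the correct equivalent one: the total time (length) spent by these orbits in a fixed large ball is uniformly bounded, which bounds both endpoints at once. Note also that your contradiction scheme cannot simply be rerun separately in each time direction: negating a one-sided claim produces a limit point trapped in one direction only, and such a point need not lie near $\mathcal{P}$ --- this is precisely the delicate spot that the paper's own proof passes over quickly when it goes from ``infinite forward length'' to $\rho^\star\in\mathcal{P}$. So, as written, your argument establishes a weaker statement than \lemref{goesunif-1}, and the closing sentence identifying the two is not justified.
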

\begin{proof}
It suffices to prove that the length of all trajectories in $K\cap D_{\delta}^{c}$
are uniformly bounded. If it is not the case, there exists $\rho_{n}\in D_{\delta}^{c}\cap K$
such that
\[
\text{lenght}\left\{ \Phi_{t}(\rho_{n})\right\} _{t\geq0}\cap K\longrightarrow+\infty
\]
as $n$ goes to infinity. Up to extract a subsequence, $\rho_{n}\longrightarrow\rho^{\star}\in D_{\delta}^{c}$.
Necessarily, $\text{lenght}\left\{ \Phi_{t}(\rho^{\star})\right\} _{t\geq0}\cap K=\infty$,
thus $\rho^{\star}\in\mathcal{P}$, this is not possible.
\end{proof}
\begin{lem}
\label{lem:closed}$\mathcal{K}$ is closed.
\end{lem}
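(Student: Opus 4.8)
The plan is to show that $\mathcal{K}^{c}$ is open, equivalently that $\mathcal{K}$ is an intersection of closed sets, after a preliminary \emph{uniform confinement} step. First I would record that any billiard trajectory which stays bounded for all $t\in\mathbb{R}$ has its spatial component contained in $\text{Conv}(\cup_{i}\Theta_{i})$: such a trajectory must meet $\cup_{i}\partial\Theta_{i}$ infinitely often both as $t\to+\infty$ and as $t\to-\infty$ (otherwise, after the last contact, it is a straight half-line and hence unbounded); contact times do not accumulate (consecutive genuine reflections occur on distinct obstacles, hence at a positive minimal distance, and near-tangential contacts are finite in number by \lemref{2cross}); so all of $\mathbb{R}$ is covered by segments joining consecutive points of $\cup_{i}\partial\Theta_{i}$, each of which lies in $\text{Conv}(\cup_{i}\Theta_{i})$. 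Since $|\xi|$ is conserved and $\mathcal{K}$ is a cone, it then suffices to prove that $\mathcal{K}\cap\{|\xi|\in[\alpha_{0},\beta_{0}]\}$ is closed, and by the previous remark this set equals $\{\rho:X_{t}(\rho)\in\text{Conv}(\cup_{i}\Theta_{i})\text{ for all }t\in\mathbb{R}\}$.

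Next I would pass to the limit. Let $\rho_{n}\in\mathcal{K}$ with $\rho_{n}\to\rho$; I claim the orbit of $\rho$ stays bounded, which by the previous paragraph forces $\rho\in\mathcal{K}$. Fix $t>0$. Applying \lemref{distm} at time $t+\tau$, for each $n$ there is $t'_{n}\in[t,t+2\tau]$ with $d(\Phi_{t'_{n}}(\rho_{n}),\Phi_{t'_{n}}(\rho))\leq C^{t'_{n}}d(\rho_{n},\rho)^{\mu}$; as $t'_{n}$ lies in a fixed compact interval the prefactor is bounded, so, using $X_{t'_{n}}(\rho_{n})\in\text{Conv}(\cup_{i}\Theta_{i})$, we get $d(X_{t'_{n}}(\rho),\text{Conv}(\cup_{i}\Theta_{i}))\to0$. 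Extracting a subsequence along which $t'_{n}\to s\in[t,t+2\tau]$ and using that a single billiard trajectory is continuous in time, $X_{t'_{n}}(\rho)\to X_{s}(\rho)$, whence $X_{s}(\rho)\in\text{Conv}(\cup_{i}\Theta_{i})$. Thus for every $t>0$ there is $s$ within $2\tau$ of $t$ with $X_{s}(\rho)\in\text{Conv}(\cup_{i}\Theta_{i})$; since the flow has speed at most $\beta_{0}$, the orbit of $\rho$ stays in the $2\beta_{0}\tau$-neighbourhood of $\text{Conv}(\cup_{i}\Theta_{i})$ for $t\geq0$, and the case $t<0$ follows by time-reversal. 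Hence the orbit of $\rho$ is bounded.

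The main obstacle I anticipate is that $\Phi_{t}$ is not continuous at a fixed time $t$ because of grazing rays: this is precisely why the limiting argument must be funnelled through \lemref{distm}, where the small freedom in the perturbed time $t'$, combined with time-continuity of a single trajectory, absorbs the lack of continuity of the flow; and it is why \lemref{2cross} — which controls how often the flow meets the tangent rays, hence how badly its regularity degrades — has to be invoked upstream. Once these ingredients are in place, the rest is a routine stability argument in closed sets.
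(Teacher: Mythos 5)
Your proof is correct, but it takes a more roundabout route than the paper's. The paper's own argument is two lines: since each $\rho_{n}$ is trapped, the spatial orbits $\pi_{x}\Phi_{t}(\rho_{n})$ are contained in a fixed ball (uniformly in $n$ and $t$), and then one passes to the limit $n\to\infty$ at each fixed $t$ using the continuity of $\rho\mapsto\pi_{x}\Phi_{t}(\rho)$, concluding directly that the orbit of $\rho$ is bounded, i.e.\ $\rho\in\mathcal{K}$. You instead refuse to use continuity of the flow map at a fixed time (out of concern for grazing rays) and replace that step by \lemref{distm}: you compare $\Phi$ along the pair $(\rho_{n},\rho)$ at a nearby time $t'_{n}$, use the time-continuity of the single trajectory $t\mapsto X_{t}(\rho)$ to pass to the limit, and conclude confinement of the orbit of $\rho$ in a $2\beta_{0}\tau$-neighbourhood of $\mathrm{Conv}(\cup\Theta_{i})$; your preliminary confinement step (bounded orbits lie in the convex hull, via infinitely many reflections in both time directions and \lemref{2cross}) is essentially what the paper implicitly uses to get its uniform bound $A$. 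Both arguments are sound: yours is more robust, since it only needs the H\"older-type estimate of \lemref{distm} already established and never invokes continuity of $\rho\mapsto X_{t}(\rho)$ (which is true for this billiard, because near-tangent reflections deflect by arbitrarily small angles, but is asserted without proof in the paper and is exactly where regularity degenerates); the paper's is much shorter. Minor points in yours: the reduction to $\{|\xi|\in[\alpha_{0},\beta_{0}]\}$ via conicity is fine for the way the lemma is used later (compactness of the projection of $\mathcal{K}$ onto $\mathbb{R}^{3}\times\mathcal{S}^{2}$), and your appeal to \lemref{2cross} for finitely many tangential contacts is more than is strictly needed for the confinement step, but neither affects correctness.
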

\begin{proof}
Let $\rho_{n}\in\mathcal{K}$, $\rho_{n}\longrightarrow\rho$. There
exists $A>0$ such that for any $t$, $d(\pi_{x}\Phi_{t}(\rho_{n}),0_{\mathbb{R}^{3}})\leq A.$
$\pi_{x}\Phi_{t}(\cdot)$ been continuous for any fixed $t$, it suffices
to pass to the limit $n\longrightarrow\infty$ in the previous inequality
to obtain $\rho\in\mathcal{K}$. 
\end{proof}

\subsection{Reduction of the problem}

We now show that we can reduce ourselves to points that remain near
trapped trajectories in logarithmic times $T_{0}\leq t\leq\epsilon|\log h|$
in order to prove the pointwise dispersive estimate (\ref{eq:pointdisp})
in times $[T_{0},\epsilon|\log h|]$. In contrast to \cite{Schreodinger},
where we used a translation argument in the spirit of \cite{MR2672795},
we are here inspired by \cite{MR2720226}.

Let $\delta>0$. By \lemref{closed}, the projection on $\mathbb{R}^{3}\times\mathcal{S}^{2}$
of the trapped set is compact, thus there exists a finite number of
phase-space segments $(S_{k})_{1\leq k\leq N_{\delta}}$ , $S_{k}=s_{k}\times\mathbb{R}\xi_{k}\subset T^{\star}\Omega$,
$s_{i}$ been a segment of $\mathbb{R}^{3}$, such that $\mathcal{K}$
is contained in a $\delta$-neighborhood of $\cup S_{k}$. The small
quantity $\delta>0$ may be reduced a finite number of time in the
sequel. 

We will now define a microlocal partition of unity $(\Pi_{k})$. Let
$p_{k}\in C_{0}^{\infty}(T^{\star}\Omega),\ 0\leq p_{k}\leq1$ be
a family of functions such that $p_{k}$ is supported in a neighborhood
$W_{k}$ of $S_{k}$ and 
\[
\sum_{1\leq k\leq N_{\delta}}p_{k}=1\ \text{in a neighborhood of }\mathcal{K}.
\]
Let us define
\[
\Pi_{k}=\text{Op}_{h}(p_{k}),\ \forall1\leq k\leq N_{\delta}.
\]
Now, let $\chi_{0}\in C^{\infty}(\mathbb{R}^{3})$ , $0\leq\chi_{0}\leq1$
such that $\chi_{0}$ is supported sufficiently far from $\text{Con}\cup\Theta_{i}$,
and equal one far from the origin. Notice that any broken bicharacteristic
entering the support of $\chi_{0}$ from its complement remains in
it for all times. We take
\[
\Pi_{0}=\chi_{0}
\]
and let
\[
\Pi_{-1}=\text{Op}_{h}\left(1-\chi_{0}-\sum_{1\leq k\leq N_{\delta}}p_{k}\right).
\]

$\Pi_{-1}$ is defined in such a way that his symbol verifies
\[
d(\text{Supp}p_{-1},\mathcal{K})\geq d_{1}>0,
\]
therefore, by \lemref{goesunif-1}, there exists $T_{0}>0$ such that
\[
\pi_{x}\Phi_{t}(\text{Supp}p_{-1})\subset\text{Supp}\chi_{0},\ \forall|t|\geq T_{0}.
\]

Now, let $\tau>0$. It will be fixed in the sequel. In the spirit
of \cite{MR2720226}, we decompose $T=(L-1)\tau+s_{0}$, where $L\in\mathbb{N}$
and $s_{0}\in[0,\tau)$. We have
\[
e^{iTh\Delta}=e^{its_{0}\Delta}\left(e^{i\tau h\Delta}\right)^{L-1},\ e^{i\tau h\Delta}=e^{i\tau h\Delta}\sum_{-1\leq k\leq N_{\delta}}\Pi_{k}.
\]
and thus
\[
e^{iTh\Delta}=\sum_{\mathbf{k}=(k_{1},\cdots,k_{L})}e^{its_{0}\Delta}\Pi_{k_{L}}e^{i\tau h\Delta}\Pi_{k_{L-1}}\cdots\Pi_{k_{1}}e^{i\tau h\Delta},
\]
where the sum is taken over all multi-indice $\mathbf{k}\in\llbracket-1,N_{\delta}\rrbracket^{L}$.
Let us remark that, because the wavefront set of the semi-classical
Schödinger flow is invariant by the generalized bicharacteristic flow,
denoting
\[
\sigma_{\mathbf{k}}=Ae^{its_{0}\Delta}\Pi_{k_{L}}e^{i\tau h\Delta}\Pi_{k_{L-1}}\cdots\Pi_{k_{1}}e^{i\tau h\Delta}\psi(-h^{2}\Delta)A^{\star},
\]
it holds that
\begin{equation}
\rho\in WF_{h}(\sigma_{\mathbf{k}})\implies\begin{cases}
\pi_{x}\rho\in\text{Supp}\phi,\\
\Phi_{j\tau}(\rho)\in\text{Supp}q_{k_{j}} & \forall1\leq j\leq L,\\
\pi_{x}\Phi_{T}(\rho)\in\text{Supp}\phi.
\end{cases}\label{eq:WFtraj}
\end{equation}

Thus we have
\begin{lem}
Let $\mathbf{k}\in\llbracket-1,N_{\delta}\rrbracket^{L}$. If there
exists $1\leq j\leq L$ such that $k_{j}=0$ or $k_{j}=-1$, then
$\sigma_{\mathbf{k}}=O(h^{\infty})$ as an $L^{1}\rightarrow L^{\infty}$
operator.
\end{lem}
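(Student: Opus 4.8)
The plan is to show that the semiclassical propagator composed with the partition-of-unity operators $\Pi_k$ forgets the piece of the flow once it enters the far region carved out by $\Pi_0=\chi_0$ or the ``escaping'' region $\Pi_{-1}$, and that once the flow escapes it never returns, so no contribution survives. The key input is the microlocal localization $(\ref{eq:WFtraj})$ together with the two escape facts already established: \emph{(i)} any broken bicharacteristic entering $\mathrm{Supp}\,\chi_0$ from its complement stays there forever (noted when $\Pi_0$ was defined), and \emph{(ii)} $\pi_x\Phi_t(\mathrm{Supp}\,p_{-1})\subset\mathrm{Supp}\,\chi_0$ for all $|t|\ge T_0$, which follows from \lemref{goesunif-1} since $\mathrm{Supp}\,p_{-1}$ is at positive distance $d_1$ from $\mathcal K$.

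First I would argue by contradiction on the operator norm. Suppose $\sigma_{\mathbf k}$ is \emph{not} $O(h^\infty)$ as an $L^1\to L^\infty$ operator; then $WF_h(\sigma_{\mathbf k})\neq\emptyset$, so we may pick $\rho$ in it and apply $(\ref{eq:WFtraj})$. Let $j_0$ be the smallest index with $k_{j_0}\in\{0,-1\}$. At time $j_0\tau$ the point $\Phi_{j_0\tau}(\rho)$ lies in $\mathrm{Supp}\,q_{k_{j_0}}$, i.e.\ either in $\mathrm{Supp}\,\chi_0$ (case $k_{j_0}=0$) or in $\mathrm{Supp}\,p_{-1}$ (case $k_{j_0}=-1$). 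In the first case, by \emph{(i)} the trajectory has entered the trap-free far region and stays in $\mathrm{Supp}\,\chi_0$ for all later times; in particular for $t\ge j_0\tau$ it never returns to a neighborhood of $\cup S_k\supset$ a neighborhood of $\mathcal K$, hence $\pi_x\Phi_T(\rho)\notin\mathrm{Supp}\,\phi$ once $\mathrm{Supp}\,\phi$ is taken in the small neighborhood of $\mathcal K\cap\{|\xi|\in[\alpha_0,\beta_0]\}$ of Proposition~\propref{semilog} and $\tau$ is chosen so that $T\ge j_0\tau$ forces enough elapsed time — here I would just note $\chi_0$ is supported far from $\mathrm{Con}\cup\Theta_i$ while $\phi$ is supported near $\mathcal K$, so the two supports are disjoint and the last line of $(\ref{eq:WFtraj})$ is violated. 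In the second case $k_{j_0}=-1$: by \emph{(ii)}, after an additional time $T_0$ the point $\Phi_{j_0\tau+T_0}(\rho)$ enters $\mathrm{Supp}\,\chi_0$; choosing $\tau>T_0$ (this is where $\tau$ gets fixed) guarantees $\Phi_{(j_0+1)\tau}(\rho)\in\mathrm{Supp}\,\chi_0$, and again by \emph{(i)} the trajectory is thereafter trapped in the far region, so as before $\pi_x\Phi_T(\rho)\notin\mathrm{Supp}\,\phi$, contradicting $(\ref{eq:WFtraj})$ — provided $j_0+1\le L$; if $j_0=L$ one instead uses that $k_L=-1$ means the final cutoff is $\Pi_{-1}$, whose symbol support is at distance $\ge d_1$ from $\mathcal K$, which is incompatible with $\pi_x\rho\in\mathrm{Supp}\,\phi$ and the localization forcing $\rho$ to stay near $\mathcal K$ throughout (alternatively one argues directly at time $j_0\tau=T-s_0$ with $s_0<\tau$ small). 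Thus $WF_h(\sigma_{\mathbf k})=\emptyset$, which for a semiclassical operator built from $\psi(-h^2\Delta)$-localized pieces gives $\sigma_{\mathbf k}=O(h^\infty)$ in every Sobolev, and in particular $L^1\to L^\infty$ by Sobolev embedding.

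The main obstacle I anticipate is bookkeeping the time increments: one must make sure that the fixed spacing $\tau$ between consecutive cutoffs is large enough ($\tau>T_0$) that a single step carries a point from $\mathrm{Supp}\,p_{-1}$ all the way into $\mathrm{Supp}\,\chi_0$, yet the argument must still handle the edge case where the bad index $j_0$ is the last one (so there is no ``next'' cutoff to test) and the initial remainder $s_0\in[0,\tau)$. This is purely a matter of choosing $\tau$ after $T_0$ is known and then shrinking $\mathrm{Supp}\,\phi$ and $\delta$, but it requires care that none of these choices are circular. Once the escape geometry is set up correctly, turning ``empty wavefront set'' into the quantitative $O(h^\infty)$ bound on the $L^1\to L^\infty$ norm is standard: the kernel of $\sigma_{\mathbf k}$ is smooth and rapidly decaying in $h$ uniformly on compact sets because every factor is properly supported and frequency-localized at scale $h^{-1}$, so one bounds the kernel in $L^\infty$ directly.
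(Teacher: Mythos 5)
Your overall strategy is the paper's: reduce to showing the operator wavefront set of $\sigma_{\mathbf{k}}$ is empty (then conclude by Sobolev embedding), and derive a contradiction with the localization (\ref{eq:WFtraj}) using the escape properties of $\Pi_{0}$ and $\Pi_{-1}$. The $k_{j}=0$ case is handled as in the paper. However, there is a genuine gap in your treatment of the case $k_{j_{0}}=-1$, and it comes from the step you yourself flag as the place where $\tau$ gets fixed: you require $\tau>T_{0}$. In the paper $\tau$ cannot be taken large: it must be chosen \emph{small} (depending on $\delta$, cf.\ Lemma \ref{lem:choixtau}) so that between consecutive sample times the flow stays within $3\delta$ of a periodic trajectory; this is what later allows the reduction to the operators $Q_{I}^{T}$ attached to primitive stories. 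A proof of the present lemma that is only valid for $\tau>T_{0}$ therefore does not prove the statement for the $\tau$ actually used in the argument. Moreover, even accepting your choice of $\tau$, the endgame is not correct: for $j_{0}=L-1$ the time $(j_{0}+1)\tau=L\tau$ exceeds $T=(L-1)\tau+s_{0}$, so knowing the trajectory is in $\mathrm{Supp}\,\chi_{0}$ at time $(j_{0}+1)\tau$ does not contradict the localization at time $T$; and for $j_{0}=L$ your claim that (\ref{eq:WFtraj}) ``forces $\rho$ to stay near $\mathcal{K}$ throughout'' is unjustified (the localization is only at the discrete times $j\tau$, and precisely fails to be near $\mathcal{K}$ at the bad index), while the parenthetical alternative ``$s_{0}<\tau$ small'' is inconsistent with your own choice $\tau>T_{0}$.

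The missing ingredient is the \emph{backward} half of the escape property, which is exactly how the paper closes these cases without any lower bound on $\tau$: since $\pi_{x}\Phi_{t}(\mathrm{Supp}\,p_{-1})\subset\mathrm{Supp}\,\chi_{0}$ for all $|t|\geq T_{0}$, one splits according to the position of the bad index. If $j\leq L-\frac{T_{0}}{\tau}$, the forward escape puts $\pi_{x}\Phi_{T}(\rho)$ in $\mathrm{Supp}\,\chi_{0}$ (reducing to the $k_{j}=0$ argument), contradicting $\pi_{x}\Phi_{T}(\rho)\in\mathrm{Supp}\,\phi$. If instead $j\geq\frac{T_{0}}{\tau}$, one runs the flow backward by at least $T_{0}$ from time $j\tau$ and contradicts the \emph{initial} localization $\pi_{x}\rho\in\mathrm{Supp}\,\phi$, since $\mathrm{Supp}\,\phi$ is disjoint from $\mathrm{Supp}\,\chi_{0}$. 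These two ranges cover all indices (for $T$ in the relevant range), with $\tau$ still free to be taken as small as the rest of the construction demands. You should replace the choice $\tau>T_{0}$ and the $j_{0}=L-1,L$ bookkeeping by this two-sided argument.
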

\begin{proof}
As remarked in \cite{MR2720226}, by virtue of Sobolev embeddings
it suffices to show that $\sigma_{\mathbf{k}}=O(h^{\infty})$ as an
$L^{2}\rightarrow L^{2}$ operator, thus has null operator wavefront
set. Let us suppose first that there exists $j$ such that $k_{j}=0$.
We choose $j$ to be the the first such indice. Suppose that $\rho\in WF_{h}(\sigma_{\mathbf{k}})$.
There exists $t_{0}\in[(j-1)\tau,j\tau]$ such that the spatial projection
of $\Phi_{j\tau}(\rho)$ enters the support of $\chi_{0}$ from its
complementary, thus it does not leave it. Therefore $\pi_{x}\Phi_{T}(\rho)\in\text{\text{Supp}\ensuremath{\chi}}_{\text{0}}$,
this is not possible. Thus $WF_{h}(\sigma_{\mathbf{k}})=\emptyset$.

Now, suppose that there exists $j\in[1,L-\frac{T_{\text{0}}}{\tau}]$
such that $k_{j}=-1$. Let $\rho\in WF_{h}(\sigma_{\mathbf{k}})$.
$\Phi_{j\tau}(\rho)\in\text{Supp}\Pi_{-1}$, hence
\[
\pi_{x}\Phi_{j\tau+t}(\rho)\in\text{\text{Supp}}\chi_{0},\ \forall t\geq T_{0},
\]
and we are reduced thus to the previous case. In the same way, we
exclude $j\in[\frac{T_{0}}{\tau},L]$ using the property for all $t\leq-T_{0}$.
\end{proof}
But, as the $\mathbf{k}$-sum contains at most $(N_{\delta}+2)^{\frac{\epsilon}{\tau}|\log h|}$
-- that is, a negative power of $h$ -- terms, we have
\[
\sum_{\mathbf{k}}O(h^{\infty})=O(h^{\infty}),
\]
and therefore we deduce from the previous lemma that, as an $L^{1}\rightarrow L^{\infty}$
operator
\[
Ae^{iTh\Delta}\psi(-h^{2}\Delta)A^{\star}=\sum_{\mathbf{k},k_{j}\geq1}\sigma_{\mathbf{k}}+O(h^{\infty}).
\]

Now, we will choose $\tau>0$ small enough given by the following
lemma:
\begin{lem}
\label{lem:choixtau}For all $\delta>0$, there exists  $\tau>0$
small enough so that, for every trajectory $\gamma\in\text{\ensuremath{\mathcal{P}}}$,
we have
\[
d(\rho,\gamma)<\delta,\ d(\Phi_{\tau}(\rho),\gamma)<\delta\implies\forall t\in[0,\tau],d(\Phi_{t}(\rho),\gamma)<3\delta.
\]
\end{lem}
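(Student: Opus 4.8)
The statement to prove is Lemma~\ref{lem:choixtau}: for every $\delta>0$ there is $\tau>0$ such that for all $\gamma\in\mathcal{P}$, if $\rho$ and $\Phi_\tau(\rho)$ are both within $\delta$ of $\gamma$, then the whole arc $\{\Phi_t(\rho):t\in[0,\tau]\}$ stays within $3\delta$ of $\gamma$. The intuition is that over a short time $\tau$ the billiard trajectory cannot move far, so if its endpoints are close to $\gamma$ the middle cannot escape a slightly larger tube; the only subtlety is uniformity over the infinitely many periodic $\gamma$'s, and the presence of reflections (which means $\Phi_t$ is not uniformly Lipschitz on all of phase space). I expect the uniformity-over-$\mathcal{P}$ issue, combined with reflections at the boundary, to be the main obstacle.

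\smallskip

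The plan is to argue by contradiction. Suppose the conclusion fails for some fixed $\delta>0$: then for each $n$ there exist $\gamma_n\in\mathcal{P}$, a point $\rho_n$, and a time $t_n\in(0,1/n)$ with $d(\rho_n,\gamma_n)<\delta$, $d(\Phi_{1/n}(\rho_n),\gamma_n)<\delta$, yet $d(\Phi_{t_n}(\rho_n),\gamma_n)\ge 3\delta$. (Here I take $\tau=1/n$; any sequence $\tau_n\to 0$ works.) First I would note that all the relevant points live in a fixed compact set: since $d(\rho_n,\gamma_n)<\delta$ and every periodic trajectory meets the convex hull of $\cup\Theta_i$, the basepoints $\pi_x\rho_n$ lie in a bounded set $K$, and we may assume $|\Xi_t\rho_n|\in[\alpha_0,\beta_0]$ by homogeneity of the flow. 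Over the time interval $[0,1/n]$ the spatial projection moves at most $\beta_0/n$, so $\Phi_{t_n}(\rho_n)$ and $\Phi_{1/n}(\rho_n)$ also stay in a fixed compact $K'$. Extracting a convergent subsequence, $\rho_n\to\rho_\star\in K'$; and since $\gamma_n$ passes within $\delta$ of $\rho_n$ and lies in a compact region, the periodic trajectories $\gamma_n$ (as compact subsets of phase space) subconverge in the Hausdorff distance to some closed flow-invariant set $\gamma_\star$ contained in $\overline{\mathcal{K}}$, with $d(\rho_\star,\gamma_\star)\le\delta$.

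\smallskip

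The contradiction then comes from continuity of the flow at $\rho_\star$. Because $t_n\to 0$, I would want $\Phi_{t_n}(\rho_n)\to\Phi_0(\rho_\star)=\rho_\star$. This is where reflections require care: $\Phi$ is continuous on $T^\star\Omega\cup T^\star\partial\Omega$ away from tangency, and by Lemma~\ref{lem:distm} applied with $t=0$ (or directly: over a time $\le 1/n$ starting from a point in $K'$ bounded away from tangential directions, the trajectory either undergoes a bounded, ultimately zero, number of reflections or crosses $W_{\mathrm{tan},\eta}$ at most twice by Lemma~\ref{lem:2cross}, in either case $d(\Phi_{t_n}(\rho_n),\rho_n)\to 0$). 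Hence $d(\Phi_{t_n}(\rho_n),\rho_\star)\to 0$, so for $n$ large $d(\Phi_{t_n}(\rho_n),\gamma_\star)\le d(\Phi_{t_n}(\rho_n),\rho_\star)+d(\rho_\star,\gamma_\star)< \tfrac{\delta}{2}+\delta<2\delta$. On the other hand, $d(\Phi_{t_n}(\rho_n),\gamma_n)\ge 3\delta$ and $d_{\mathrm{Haus}}(\gamma_n,\gamma_\star)\to 0$ force $d(\Phi_{t_n}(\rho_n),\gamma_\star)\ge 3\delta-d_{\mathrm{Haus}}(\gamma_n,\gamma_\star)>\tfrac{5\delta}{2}$ for $n$ large, a contradiction. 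Therefore such $\tau$ exists. The one technical point to nail down carefully in the write-up is the claim $d(\Phi_{t_n}(\rho_n),\rho_n)\to 0$ uniformly as $t_n\to 0$ over the compact family — i.e. that short-time displacement is small even across a reflection — which is exactly the content for which Lemmas~\ref{lem:2cross} and \ref{lem:distm} were prepared; everything else is soft compactness.
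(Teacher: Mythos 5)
Your compactness setup (extracting $\rho_\star$ and a Hausdorff limit $\gamma_\star$, and playing the two distance bounds against each other) is sound, but the argument breaks exactly at the step you yourself flag as the one to nail down: the claim that $d(\Phi_{t_n}(\rho_n),\rho_n)\to 0$ as $t_n\to 0$, ``even across a reflection''. In the phase-space distance used throughout the paper (position \emph{and} direction), this is false: if a non-grazing reflection occurs during $[0,t_n]$, the direction component of $\Phi_{t_n}(\rho_n)$ differs from that of $\rho_n$ by twice the normal component of the incidence, an amount bounded below independently of $t_n$. Nothing forces ``ultimately zero reflections'': take $\rho_n$ at distance $\sim t_n\beta_0$ from an obstacle, heading into it, and the displacement stays of size $O(1)$ for every $n$. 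Neither cited lemma rescues this: \lemref{2cross} only counts crossings of $W_{\text{tan},\eta}$, and \lemref{distm} gives H\"older continuity in the \emph{initial data} at a shifted time $t'$ --- the shift is allowed precisely because continuity at fixed times fails across reflections. Since the reflection case is the only non-trivial case of the lemma (otherwise the arc is a single straight segment and everything is immediate), the proof as written does not handle the case it was needed for. A symptom of the same problem is that your contradiction never uses the second hypothesis $d(\Phi_\tau(\rho),\gamma)<\delta$; in the standard phase-space metric that hypothesis (or an identification of incoming and outgoing directions at the boundary) is what controls the post-reflection direction, so an argument that ignores it cannot be complete.

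For comparison, the paper proves the lemma directly and quantitatively: it takes $\tilde\rho\in\gamma$ realizing $d(\rho,\gamma)$, lets $t_0,\tilde t_0$ be the first reflection times of the two trajectories, uses the flow-regularity estimate from the proof of \lemref{distm} for $t\in[0,\tau]\setminus(t_0,\tilde t_0)$ to get a bound $C^\tau\delta$, and on the mismatch window $[t_0,\tilde t_0]$ uses that both arcs are straight lines of speed at most $\beta_0$, so the extra error is at most $2\tau\beta_0$; choosing $\tau$ with $2\tau\beta_0\le\delta$ and $C^\tau\le 2$ gives the $3\delta$ bound uniformly in $\gamma$. If you want to keep a contradiction/compactness scheme, you would still have to import this kind of quantitative control of the reflection window (or explicitly work in the quotient where incoming and outgoing directions at $\partial\Omega$ are identified, and use the time-$\tau$ hypothesis to handle the post-reflection segment); the soft limit argument by itself does not see the direction jump.
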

\begin{proof}
Let $\tilde{\rho}$ realizing the distance from $\rho$ to $\gamma$.
We denote
\[
t_{0}=\inf\left\{ t\geq0,\ \text{s.t. }\pi_{x}\Phi_{t}(\rho)\in\Theta\right\} ,\ \tilde{t}_{0}=\inf\left\{ t\geq0,\ \text{s.t. }\pi_{x}\Phi_{t}(\tilde{\rho})\in\Theta\right\} .
\]
We assume that, for example, $\tilde{t}_{0}>t_{0}$. Notice that,
by the proof of \lemref{distm} from \cite{Schreodinger}, we have
\[
\forall t\in[0,\tau]\backslash(t_{0},\tilde{t}_{0}),\ d(\Phi_{t}(\rho),\Phi_{t}(\tilde{\rho}))\leq C^{\tau}\delta.
\]
Moreover, for $t\in[t_{0},\tilde{t}_{0}],$
\[
d(\Phi_{t}(\rho),\Phi_{t}(\tilde{\rho}))\leq d(\Phi_{t}(\rho),\Phi_{t_{0}}(\rho))+d(\Phi_{t_{0}}(\rho),\Phi_{t_{0}}(\tilde{\rho}))+d(\Phi_{t_{0}}(\tilde{\rho}),\Phi_{t}(\tilde{\rho})),
\]
but, as $\left\{ (\Phi_{t}(\rho)\right\} _{t\in[t_{0},\tilde{t}_{0}]}$
and $\left\{ (\Phi_{t}(\tilde{\rho})\right\} _{t\in[t_{0},\tilde{t}_{0}]}$
are straight lines 
\[
d(\Phi_{t}(\rho),\Phi_{t_{0}}(\rho))\leq|t-t_{0}||\pi_{\xi}\rho|\leq\tau\beta_{0},
\]
and similarly for $\tilde{\rho}$. Therefore
\[
d(\Phi_{t}(\rho),\Phi_{t}(\tilde{\rho}))\leq2\tau\beta_{0}+C^{\tau}\delta.
\]
We take $\tau>0$ small enough so that $2\tau\beta_{0}\leq\delta$
and $C^{\tau}\leq2$ and we get the result.
\end{proof}
The segment $S_{k_{j}}$ joins the obstacles $\Theta_{a_{j}}$ and
$\Theta_{b_{j}}$. Choosing $\delta>0$ small enough, by (\ref{eq:WFtraj}),
$\sigma_{\mathbf{k}}$ is not $O(h^{\infty})$ only if, for all $j$
\[
(a_{j}=a_{j+1}\text{ and }b_{j}=b_{j+1})\text{ or }(a_{i+1}=b_{j}).
\]
that is, only if $\gamma_{\mathbf{k}}=S_{k_{1}}\circ S_{k_{2}}\circ\cdots\circ S_{k_{L}}$
is a trajectory. Let, if it is the case, $J_{\mathbf{k}}$ be the
corresponding story of reflexions. We extract from $J_{\mathbf{k}}$
the primitive story $I_{\mathbf{k}}$, that is, $J_{\mathbf{k}}=lI_{\mathbf{k}}+r$,
$I_{\mathbf{k}}$ been primitive.

We now introduce the trapped set of an open subset in time $T$: 
\begin{defn}
Let $D$ be an open subset of $(T^{\star}\Omega\cup T^{\star}\partial\Omega)\cap\left\{ |\xi|\in[\alpha_{0},\beta_{0}]\right\} $
and $T>0$. We define the trapped set of $D$ in time $T$, denoted
$\mathcal{T}_{T}(D)$, in the following way
\[
\rho\in\mathcal{T}_{T}(D)\iff\forall t\in[0,T],\ \Phi_{T}(\rho)\in D.
\]
\end{defn}
Let us denote by $D_{I_{\mathbf{k}},\delta}$ a $\delta$-neighborhood
of $\gamma_{\mathbf{k}}\cap\left\{ |\xi|\in[\alpha_{0},\beta_{0}]\right\} $.
For $I$ a primitive story of reflexions, let $q_{I,T}\in C_{0}^{\infty}$
be such that
\begin{equation}
q_{I,T}=0\text{ outside }\mathcal{T}_{T}(D_{I,4\delta}),\ q_{I,T}=1\text{ in }\mathcal{T}_{T}(D_{I,3\delta}),\label{eq:qIT}
\end{equation}
and denote 
\[
Q_{I}^{T}:=\text{Op}_{h}(q_{I,T}).
\]
We have, by (\ref{eq:WFtraj}) and the choice of $\tau>0$ permitted
by \lemref{choixtau}
\[
\sigma_{\mathbf{k}}=\sigma_{\mathbf{k}}Q_{I_{\mathbf{k}}}^{T}+O(h^{\infty}).
\]
Now, remark that for $I$ a primitive story of reflexions
\[
Ae^{iTh\Delta}\psi(-h^{2}\Delta)A^{\star}Q_{I}^{T}=\sum_{\mathbf{k},I_{\mathbf{k}}=I}\sigma_{\mathbf{k}}Q_{I}^{T}+O(h^{\infty}),
\]
and therefore we recover
\[
\sum_{I\text{ primitive}}Ae^{iTh\Delta}\psi(-h^{2}\Delta)A^{\star}Q_{I}^{T}=Ae^{iTh\Delta}\psi(-h^{2}\Delta)A^{\star}+O(h^{\infty}).
\]

Let us finally remark that for $T\leq\epsilon|\log h|$, we have $h\leq e^{-\frac{T}{\epsilon}}$,
thus the $O(h^{\infty})$ term verifies the dispersive estimate. Therefore,
we have proven that:
\begin{lem}
\label{lem:redtrap}If the following dispersive estimate holds true
\[
\Vert\sum_{I\text{ primitive}}Ae^{iTh\Delta}\psi(-h^{2}\Delta)A^{\star}Q_{I}^{T}\Vert_{L^{1}\longrightarrow L^{\infty}}\lesssim(hT)^{-\frac{3}{2}},\ \forall T_{0}\leq T\leq\epsilon|\log h|,
\]
then the dispersive estimate (\ref{eq:pointdisp}) is true in times
$[T_{0},\epsilon|\log h|]$.
\end{lem}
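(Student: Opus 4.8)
The plan is to read off \lemref{redtrap} as essentially a bookkeeping consequence of the reductions already carried out in this section, so the ``proof'' is a matter of assembling the pieces in the right order. First I would recall that by the $TT^{\star}$ argument already invoked, it suffices to establish the pointwise dispersive estimate \eqref{eq:pointdisp} for $Ae^{iTh\Delta}\psi(-h^{2}\Delta)A^{\star}$, and that the regime $0\le T\le T_{0}$ is already handled (it is a bounded-time estimate, so it follows from the local-in-time, frequency-localized dispersive estimate inside a domain with boundary, together with finite propagation/finite number of reflections in bounded time). Hence the entire content of the lemma is the reduction of the range $[T_{0},\epsilon|\log h|]$ to the trapped-set-localized operator $\sum_{I}Ae^{iTh\Delta}\psi(-h^{2}\Delta)A^{\star}Q_{I}^{T}$.

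The main chain of identities to reproduce is the following. Fix $T\in[T_{0},\epsilon|\log h|]$ and write $T=(L-1)\tau+s_{0}$ as above, so that $e^{iTh\Delta}$ decomposes into the sum over multi-indices $\mathbf{k}\in\llbracket-1,N_{\delta}\rrbracket^{L}$ of the operators $\sigma_{\mathbf{k}}$. The previous lemma discards every $\mathbf{k}$ containing a $0$ or $-1$ entry up to an $O(h^{\infty})$ error, and since there are at most $(N_{\delta}+2)^{\frac{\epsilon}{\tau}|\log h|}$ terms --- a fixed negative power of $h$ --- the total discarded contribution is still $O(h^{\infty})$ in $L^{1}\to L^{\infty}$. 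Among the surviving $\mathbf{k}$ (all entries $\ge1$), the wavefront constraint \eqref{eq:WFtraj} combined with \lemref{choixtau} and the choice of $\delta$ forces $\gamma_{\mathbf{k}}$ to be an actual periodic trajectory, with primitive story $I_{\mathbf{k}}$; for such $\mathbf{k}$ one has $\sigma_{\mathbf{k}}=\sigma_{\mathbf{k}}Q_{I_{\mathbf{k}}}^{T}+O(h^{\infty})$ because $Q_{I}^{T}$ is microlocally $1$ on $\mathcal{T}_{T}(D_{I,3\delta})$ which contains $WF_{h}(\sigma_{\mathbf{k}})$. Summing over $\mathbf{k}$ with fixed $I_{\mathbf{k}}=I$ gives $Ae^{iTh\Delta}\psi(-h^{2}\Delta)A^{\star}Q_{I}^{T}=\sum_{\mathbf{k}:I_{\mathbf{k}}=I}\sigma_{\mathbf{k}}Q_{I}^{T}+O(h^{\infty})$, and summing over the primitive $I$ recovers $Ae^{iTh\Delta}\psi(-h^{2}\Delta)A^{\star}=\sum_{I}Ae^{iTh\Delta}\psi(-h^{2}\Delta)A^{\star}Q_{I}^{T}+O(h^{\infty})$ (again the $O(h^{\infty})$ survives the sum since the number of primitive stories appearing with $\sigma_{\mathbf{k}}\ne O(h^{\infty})$ for given $L$ is at most the number of surviving $\mathbf{k}$, a negative power of $h$).

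Finally I would observe that for $T\le\epsilon|\log h|$ we have $h\le e^{-T/\epsilon}$, so any $O(h^{\infty})$ error is in particular $\lesssim h^{N}\lesssim e^{-NT/\epsilon}$ for every $N$, hence trivially bounded by $(hT)^{-3/2}$ for $N$ large; thus the $O(h^{\infty})$ remainder satisfies the desired dispersive bound. Combining the hypothesized estimate $\|\sum_{I}Ae^{iTh\Delta}\psi(-h^{2}\Delta)A^{\star}Q_{I}^{T}\|_{L^{1}\to L^{\infty}}\lesssim(hT)^{-3/2}$ with this remark and the triangle inequality yields $\|Ae^{iTh\Delta}\psi(-h^{2}\Delta)A^{\star}\|_{L^{1}\to L^{\infty}}\lesssim(hT)^{-3/2}$ for all $T\in[T_{0},\epsilon|\log h|]$, which is exactly \eqref{eq:pointdisp} on that range.

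The only genuinely non-routine point is making sure the number of surviving multi-indices really is a negative power of $h$ and that none of the $\delta$-dependent constants ($N_{\delta}$, $\tau$, $T_{0}$, the number of relevant primitive stories per $L$) secretly depend on $h$; this is guaranteed by fixing $\delta$ (hence $\tau$, $N_{\delta}$, $T_{0}$) once and for all before letting $h\to0$, exactly as the ordering of quantifiers in the construction above dictates. Everything else is the recollection of identities already displayed in this section, so I do not anticipate a serious obstacle.
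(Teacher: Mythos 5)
Your argument reproduces the paper's own proof: the decomposition of $e^{iTh\Delta}$ into the $\sigma_{\mathbf{k}}$, discarding multi-indices containing $0$ or $-1$ up to $O(h^{\infty})$ summed over at most a negative power of $h$ terms, inserting $Q_{I_{\mathbf{k}}}^{T}$ via the wavefront constraint \eqref{eq:WFtraj} and \lemref{choixtau}, and absorbing the $O(h^{\infty})$ remainders into the $(hT)^{-3/2}$ bound since $h\leq e^{-T/\epsilon}$ — this is exactly the chain of identities the paper assembles before stating the lemma. The only superfluous part is your discussion of the regime $0\leq T\leq T_{0}$ and the $TT^{\star}$ reduction, which belongs to the surrounding reduction (\lemref{redtrap-ult}) rather than to this lemma, but it does no harm.
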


\subsection{Times $0\protect\leq t\protect\leq T_{0}$ and conclusion of the
section}

Finally, notice that the construction of $Q_{I}^{T}$ does not depend
of $\phi$. We choose $\phi$ supported in a small enough neighborhood
of $\mathcal{K}$ so that, in times $0\leq t\leq T_{0}$ and for $|\xi|\in[\alpha_{0},\beta_{0}]$,
the bicharacteristic flow $\Phi_{t}(\rho)$ starting from $\rho$
has only hyperbolic points of intersection with the boundary. But,
for such points, we can use the parametrix construction of Ikawa \cite{IkawaMult,Ikawa2},
adapted to this problem in \cite{Schreodinger} and explained in the
next section in the $N$-convex framework to show that the dispersive
estimate holds true in times $0\leq t\leq T_{0}$, with a constant
depending on $T_{0}$: indeed, the flow can be writen as a finite
(depending on $T_{0}$) sum of reflected waves, each of them verifying
the dispersive estimate. 

Thus, by \lemref{redtrap}, we are reduced to show the following dispersive
estimate in order to obtain our main result, namely, we have
\begin{lem}
\label{lem:redtrap-ult}If the following dispersive estimate holds
true
\begin{equation}
\Vert\sum_{I\text{ primitive}}Ae^{iTh\Delta}\psi(-h^{2}\Delta)A^{\star}Q_{I}^{T}\Vert_{L^{1}\longrightarrow L^{\infty}}\lesssim(hT)^{-\frac{3}{2}},\ \forall T_{0}\leq T\leq\epsilon|\log h|,\label{eq:finbutult}
\end{equation}
then Strichartz estimates of Theorem \ref{th} hold true for the Schrödinger
equation.
\end{lem}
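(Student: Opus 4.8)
The plan is to assemble the reductions already obtained in Sections~2 and~3 and to close the loop back to Theorem~\ref{th}. Granting the hypothesis~(\ref{eq:finbutult}), the first step is simply to quote Lemma~\lemref{redtrap}: it gives the pointwise dispersive estimate~(\ref{eq:pointdisp}) on the range $T_{0}\le t\le\epsilon|\log h|$. So all that is missing to have~(\ref{eq:pointdisp}) on the full semiclassical interval $0\le t\le\epsilon|\log h|$ is the short-time window $0\le t\le T_{0}$.

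For that window I would use the fact, recorded in the subsection above, that $\phi$ is supported in a small enough neighbourhood of $\mathcal{K}\cap\{|\xi|\in[\alpha_{0},\beta_{0}]\}$ that every bicharacteristic issued from $\text{Supp}\,\phi$ meets $\partial\Omega$ only at transverse (hyperbolic) points for $t\in[0,T_{0}]$. Hence on this interval $Ae^{ith\Delta}\psi(-h^{2}\Delta)A^{\star}$ can be written, via the Ikawa parametrix of \cite{IkawaMult,Ikawa2} adapted in \cite{Schreodinger} and recalled in the next section in the $N$-convex setting, as a finite sum --- of cardinality controlled by $T_{0}$ alone, using strict convexity of the obstacles --- of reflected wave packets, each of which satisfies the free-type dispersive bound $(ht)^{-3/2}$. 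Summing the finitely many of them yields~(\ref{eq:pointdisp}) on $[0,T_{0}]$ with a constant depending only on $T_{0}$, which is harmless; combined with the previous paragraph, (\ref{eq:pointdisp}) holds for all $0\le t\le\epsilon|\log h|$.

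Next I would run the classical $TT^{\star}$ argument indicated right after the statement of Proposition~\propref{semilog}: the uniform bound $\Vert Ae^{ith\Delta}\psi(-h^{2}\Delta)A^{\star}\Vert_{L^{1}\to L^{\infty}}\lesssim(ht)^{-3/2}$ on $0\le t\le\epsilon|\log h|$, together with the trivial $L^{2}\to L^{2}$ bound, gives by the Keel--Tao machinery the non-endpoint Strichartz estimate~(\ref{eq:butult}) of Proposition~\propref{semilog} on the interval $(0,\epsilon h|\log h|)$ after undoing the semiclassical time rescaling. Finally, having Proposition~\propref{smooth_logloss} (smoothing with logarithmic loss) and Proposition~\propref{smooth_wo} (smoothing without loss outside $\mathcal{K}$) at hand, the argument of \cite{Schreodinger}, Section~2, applies verbatim: one localises into a neighbourhood of $\mathcal{K}$ and its complement, stacks $h$-length Strichartz pieces in the non-trapping region using the no-loss smoothing estimate, and near $\mathcal{K}$ stacks the $h|\log h|$-length pieces~(\ref{eq:butult}), the logarithmic factor there exactly absorbing the logarithmic loss of~(\ref{eq:smooth_logloss}). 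This produces the global Strichartz estimates of Theorem~\ref{th} for the Schr\"odinger equation.

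The only step that is not pure bookkeeping is the short-time analysis on $[0,T_{0}]$: one must be sure that the Ikawa parametrix of the next section really does represent the $\phi$-localised flow as a finite sum of reflected waves, each obeying $(ht)^{-3/2}$, with per-wave dispersive constants uniform in the reflection history and a total number of reflections bounded by a function of $T_{0}$ only. I expect this to be the main (though by now standard) obstacle; everything else is a recollection of estimates proved or cited earlier.
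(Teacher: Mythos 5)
Your proposal is correct and follows essentially the same route as the paper: Lemma~\lemref{redtrap} for the range $[T_{0},\epsilon|\log h|]$, the Ikawa parametrix with only hyperbolic reflections to handle $[0,T_{0}]$ with a $T_{0}$-dependent constant, the $TT^{\star}$ argument to deduce Proposition~\propref{semilog}, and then the Section~2 argument of \cite{Schreodinger} combined with Propositions~\propref{smooth_logloss} and~\propref{smooth_wo} to conclude Theorem~\ref{th} for the Schr\"odinger equation. The paper treats the short-time window exactly as you indicate (a finite, $T_{0}$-bounded sum of reflected waves each obeying the dispersive bound), so no gap remains.
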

where the symbols of $Q_{I}^{T}$ were defined by (\ref{eq:qIT}).
The sequel of the paper is devoted to doing so.

Let us remark that, with the same proof as in \cite{Schreodinger},
we have, as a consequence of \lemref{distm}, 
\[
d(\mathcal{T}_{T}(\tilde{D})^{c},\mathcal{T}_{T}(D))\geq\frac{1}{4}e^{-cT}d(\tilde{D}{}^{c},D),\ \forall D\subset\tilde{D}
\]
and therefore $q_{I}^{T}$ can, and will be constructed in such a
way that, for $0\leq T\leq\epsilon|\log h|$

\begin{equation}
|\partial_{\alpha}q_{I}^{T}|\lesssim h^{-2|\alpha|c\epsilon}.\label{eq:contrq}
\end{equation}

\section{Construction of an approximate solution}

\subsection{The microlocal cut off}

We will use the reflected-phase construction of \cite{Ikawa2,IkawaMult}
and \cite{MR1254820}. It is summed up in \cite{Schreodinger}, let
us recall that $\varphi_{J}$ is the reflected phase obtained from
$\varphi$ after the story of reflexions $J$.

According to \cite{MR1254820} (remark 3.17') there exists $M>0$
such that if $J\in\mathcal{I}$, $J=rI+l$ verifies $|J|\geq M$,
and $\varphi$ verifies $(P)$, $\varphi_{J}$ can be defined in $\mathcal{U}_{I,l}^{\infty}$.
We choose $\delta>0$ small enough so that, according to the construction
of the previous section
\[
D_{I,4\delta}\subset\bigcup_{|l|\leq|I|-1}\mathcal{U}_{I,l}^{\infty},
\]
moreover, we will take $T_{0}\geq2\beta_{0}M$.

Let us recall that we are reduced to show the following dispersive
estimate:
\[
\Vert\sum_{I\text{ primitive}}Ae^{iTh\Delta}\psi(-h^{2}\Delta)A^{\star}Q_{I}^{T}\Vert_{L^{1}\longrightarrow L^{\infty}}\lesssim(hT)^{-\frac{3}{2}},\ \forall T_{0}\leq T\leq\epsilon|\log h|.
\]
For all primitive story $I$, let us define
\[
\delta_{I}^{y}(x)=\frac{1}{(2\pi h)^{3}}\int e^{-i(x-y)\cdot\xi/h}p_{I,T}(x,\xi)d\xi,
\]
where $p_{I,T}$ is the symbol associated with $P_{I}^{T}:=\psi(-h^{2}\Delta)A^{\star}Q_{I}^{T}$.
Then we have, for $u_{0}\in L^{2}$
\[
\psi(-h^{2}\Delta)A^{\star}Q_{I}^{T}u_{0}(x)=\int\delta_{I}^{y}(x)u_{0}(y)dy.
\]
Then, by linearity of the flow
\[
Ae^{ith\Delta}\psi(-h^{2}\Delta)A^{\star}Q_{I}^{T}u_{0}=\int Ae^{ith\Delta}\delta_{I}^{y}u_{0}(y)dy,
\]
and it therefore suffices to show that
\[
\sum_{I\text{ primitive}}|Ae^{iTh\Delta}\delta_{I}^{y}(x)|\lesssim(hT)^{-3/2},\ \forall T_{0}\leq T\leq\epsilon|\log h|.
\]
Finally, notice that as the operator $A$ is bounded in $L^{\infty}\rightarrow L^{\infty}$
in the same way as in \cite{Schreodinger}, it suffices only to show
that
\begin{equation}
\sum_{I\text{ primitive}}|\chi e^{iTh\Delta}\delta_{I}^{y}(x)|\lesssim(hT)^{-3/2},\ \forall T_{0}\leq T\leq\epsilon|\log h|,\label{eq:butult2}
\end{equation}
where $\chi\in C_{c}^{\infty}(\mathbb{R}^{3})$ is supported in a
neighborhood of the spatial projection of the support of $\phi$ and
equal to one on it.

In order to do so, we will construct a parametrix, that is, an approximate
solution, in time $0\leq t\leq\epsilon|\log h|$ for the semi-classical
Schrödinger equation with data $\delta_{I}^{y}$. The first step will
be to construct an approximate solution of the semi-classical Schrödinger
equation with data 
\[
e^{-i(x-y)\cdot\xi/h}p_{I,T}(x,\xi)
\]
where $\xi\in\mathbb{R}^{n}$ is fixed and considered as a parameter.
Now that we are localized around a trajectory, the construction is
exactly the same as in \cite{Schreodinger}. Let us sum it up briefly.
In the sequel of this section, $p_{I,T}$ will be denoted $p$ in
the seek of conciseness.

\subsection{Approximate solution}

We look for the solution in positives times of the equation
\[
\begin{cases}
(i\partial_{t}w-h\Delta w) & =0\ \text{in }\Omega\\
w(t=0)(x) & =e^{-i(x-y)\cdot\xi/h}p(x,\xi)\\
w_{|\partial\Omega} & =0
\end{cases}
\]
as the Neumann serie \emph{
\[
w=\sum_{J\in\mathcal{I}}(-1)^{|J|}w^{J}
\]
}where
\[
\begin{cases}
(i\partial_{t}w^{\emptyset}-h\Delta w^{\emptyset}) & =0\ \text{in }\mathbb{R}^{n}\\
w^{\emptyset}(t=0)(x) & =e^{-i(x-y)\cdot\xi/h}p(x,\xi)
\end{cases}
\]
 and, for $J\neq\emptyset$, $J=(j_{1},\cdots,j_{n})$, $J'=(j_{1},\cdots,j_{n-1})$
\begin{equation}
\begin{cases}
(i\partial_{t}w^{J}-h\Delta w^{J}) & =0\ \text{in }\mathbb{R}^{n}\backslash\Theta_{j_{n}}\\
w^{J}(t=0) & =0\\
w_{|\partial\Theta_{j_{n}}}^{J} & =w_{|\partial\Theta_{j_{n}}}^{J'}.
\end{cases}\label{eq:wJ}
\end{equation}
We will look for the $w^{J}$'s as power series in $h.$ In the sake
of conciseness, these series will be considered at a formal level
in this section, and we will introduce their expression as a finite
sum plus a reminder later, in the last section.

We look for $w^{\emptyset}$ as
\begin{gather*}
w^{\emptyset}=\sum_{k\geq0}h^{k}w_{k}^{\emptyset}e^{-i((x-y)\cdot\xi-t\xi^{2})/h},\\
w_{0}^{\emptyset}(t=0)=q(x,\xi),\ w_{k}^{\emptyset}(t=0)=0.
\end{gather*}
Solving the transport equations gives immediately
\begin{align*}
w_{0}^{\emptyset} & =p(x-2t\xi,\xi),\\
w_{k}^{\emptyset} & =-i\int_{0}^{t}\Delta w_{k-1}^{\emptyset}(x-2(s-t)\xi,s)ds\quad k\geq1.
\end{align*}

Now, starting from the phase $\varphi(x)=\frac{(x-y)\cdot\xi}{|\xi|}$,
we define the reflected phases as before and we look for $w^{J}$
as:
\begin{gather*}
w^{J}=\sum_{k\geq0}h^{k}w_{k}^{J}e^{-i(\varphi_{J}(x,\xi)|\xi|-t\xi^{2})/h},\\
w_{k}^{J}|_{t\leq0}=0,\ w_{k|\partial\Theta_{j_{n}}}^{J}=w_{k|\partial\Theta_{j_{n}}}^{J'}.
\end{gather*}
For $x\in\mathcal{U}_{J}(\varphi)$, we have 
\[
\begin{cases}
(\partial_{t}+2|\xi|\nabla\varphi_{J}\cdot\nabla+|\xi|\Delta\varphi_{J})w_{0}^{J} & =0\\
w_{0|\Theta_{j_{n}}}^{J} & =w_{0|\Theta_{j_{n}}}^{J'}\\
w_{0}^{J}|_{t\leq0} & =0
\end{cases}
\]
and
\[
\begin{cases}
(\partial_{t}+2|\xi|\nabla\varphi_{J}\cdot\nabla+|\xi|\Delta\varphi_{J})w_{k}^{J} & =-i\Delta w_{k-1}^{J}\\
w_{k|\Theta_{jn}}^{J} & =w_{k|\Theta_{j_{n}}}^{J'}\\
w_{k}^{J}|_{t\leq0} & =0.
\end{cases}
\]

Solving the transport equations along the rays by the procedure explained
in \cite{Schreodinger}, we get the exact same following expressions
of $w_{k}^{J}$ for $x\in\mathcal{U}_{J}(\varphi)$:
\begin{prop}
\label{prop:solref}We denote by $\hat{X}_{-2t}(x,|\xi|\nabla\varphi_{J})$
the backward spatial component of the flow starting from $(x,|\xi|\nabla\varphi_{J})$,
defined in the same way as $X_{-2t}(x,|\xi|\nabla\varphi_{J})$, at
the difference that we ignore the first obstacle encountered if it's
not $\Theta_{j_{n}},$ and we ignore the obstacles after $|J|$ reflections.
Moreover, for $J=(j_{1},\dots,j_{n})\in\mathcal{I}$, we denote by
\[
J(x,t,\xi)=\begin{cases}
(j_{1},\cdots,j_{k}) & \text{if }\hat{X}_{-2t}(x,|\xi|\nabla\varphi_{J})\text{ has been reflected \ensuremath{n-k} times,}\\
\emptyset & \text{if }\hat{X}_{-2t}(x,|\xi|\nabla\varphi_{J})\text{ has been reflected \ensuremath{n} times}.
\end{cases}
\]
 Then, the $w_{k}^{J}$'s are given by, for $t\geq0$ and $x\in\mathcal{U}_{J}(\varphi)$
\[
w_{0}^{J}(x,t)=\Lambda\varphi_{J}(x,\xi)p(\hat{X}_{-2t}(x,|\xi|\nabla\varphi_{J}),\xi)
\]
where
\[
\Lambda\varphi_{J}(x,\xi)=\left(\frac{G\varphi_{J}(x)}{G\varphi_{J}(X^{-1}(x,|\xi|\nabla\varphi_{J}))}\right)^{1/2}\times\cdots\times\left(\frac{G\varphi(X^{-|J|-1}(x,|\xi|\nabla\varphi_{J}))}{G\varphi(X^{-|J|}(x,|\xi|\nabla\varphi_{J}))}\right)^{1/2},
\]
and, for $k\geq1$, and $x\in\mathcal{U}_{J}(\varphi)$
\[
w_{k}^{J}(x,t)=-i\int_{0}^{t}g_{\varphi_{J}}(x,t-s,\xi)\Delta w_{k-1}^{J(x,\xi,t-s)}(\hat{X}_{-2(t-s)}(x,|\xi|\nabla\varphi_{J}),s)ds
\]
where

\[
g_{\varphi_{J}}(x,\xi,t)=\left(\frac{G\varphi_{J}(x)}{G\varphi_{J}(X^{-1}(x,|\xi|\nabla\varphi_{J}))}\right)^{1/2}\times\cdots\times\left(\frac{G\varphi_{J(x,t,\xi)}(X^{-|J(x,t,\xi)|-1}(x,|\xi|\nabla\varphi_{J}))}{G\varphi_{J(x,t,\xi)}(\hat{X}_{-2t}(x,|\xi|\nabla\varphi_{J}))}\right)^{1/2}.
\]
\end{prop}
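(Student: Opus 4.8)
The plan is to solve both transport equations by the method of characteristics, exactly as in \cite{Schreodinger}; since we have already localized around a single periodic trajectory, the presence of $N\geq 3$ obstacles plays no role and only the bookkeeping of successive reflections has to be carried along. First I would observe that, for $\xi$ fixed, the first-order vector field $\partial_{t}+2|\xi|\nabla\varphi_{J}\cdot\nabla$ is tangent to the broken rays carrying the phase $\varphi_{J}$: indeed $\nabla\varphi_{J}$ is constant along each segment of the generalized geodesic issued from the support of $\varphi$, so the characteristic curves are $s\mapsto X_{2s}(\,\cdot\,,|\xi|\nabla\varphi_{J})$, and the foot of the backward characteristic through $(x,t)$ is $\hat{X}_{-2t}(x,|\xi|\nabla\varphi_{J})$ on $\mathcal{U}_{J}(\varphi)$.

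For $w_{0}^{J}$ the equation then becomes, along such a characteristic, the scalar linear ODE $\frac{d}{ds}\big(w_{0}^{J}\circ\mathrm{ray}\big)=-|\xi|\,(\Delta\varphi_{J})\,\big(w_{0}^{J}\circ\mathrm{ray}\big)$, whose integration gives the value of $w_{0}^{J}$ at the foot of the characteristic multiplied by the amplitude factor $\exp\big(-|\xi|\int(\Delta\varphi_{J})\,ds\big)$. The next step is to identify this factor: using the identity expressing $\frac{d}{ds}\log G\varphi_{J}$ along a ray in terms of $\Delta\varphi_{J}$ --- that is, the fact that $G\varphi_{J}$ is the geometric density propagated by the ray flow --- it equals the product of square roots of ratios of $G$'s defining $\Lambda\varphi_{J}$, with one ratio per segment of the broken ray, the phase on the $m$-th segment being the reflected phase obtained from $\varphi$ after the first $m$ reflections of $J$. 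Imposing at each reflection the matching condition $w_{0|\partial\Theta_{j_{n}}}^{J}=w_{0|\partial\Theta_{j_{n}}}^{J'}$ and recursing backwards through all $|J|$ reflections down to the free wave $w^{\emptyset}$ with data $p(\,\cdot\,,\xi)$ yields the closed form $w_{0}^{J}(x,t)=\Lambda\varphi_{J}(x,\xi)\,p(\hat{X}_{-2t}(x,|\xi|\nabla\varphi_{J}),\xi)$ on $\mathcal{U}_{J}(\varphi)$.

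For $k\geq 1$ the left-hand side is unchanged and the source is $-i\Delta w_{k-1}^{J}$, so I would apply Duhamel's formula along the characteristics: $w_{k}^{J}(x,t)$ is $-i\int_{0}^{t}$ of the homogeneous solution operator from time $s$ to time $t$ applied to $\Delta w_{k-1}^{J}(\,\cdot\,,s)$. This solution operator carries the same geometric density, which is precisely $g_{\varphi_{J}}(x,\xi,t-s)$, and evaluating the source at the foot $\hat{X}_{-2(t-s)}(x,|\xi|\nabla\varphi_{J})$ of the backward characteristic produces the stated integral. The one point requiring care --- and essentially the only (modest) obstacle --- is that along a backward characteristic the number of reflections already undergone decreases, so $w_{k-1}$ must be evaluated with the truncated story $J(x,\xi,t-s)$ and the truncated backward flow $\hat{X}$ (which skips a leading obstacle when it is not $\Theta_{j_{n}}$ and stops after $|J|$ reflections); keeping track of this is done exactly as in \cite{Schreodinger}, using that by \cite{MR1254820} the reflected phases $\varphi_{J}$ --- and hence all the objects above --- are well defined on the neighborhoods $\mathcal{U}_{J}(\varphi)$. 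One thus recovers verbatim the expressions of \cite{Schreodinger}.
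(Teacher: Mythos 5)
Your proposal is correct and follows essentially the same route as the paper, which gives no separate argument here but simply invokes the transport-equation/characteristics procedure of \cite{Schreodinger}: integrating along the broken rays, identifying the amplitude with the Gaussian-curvature ratios $\Lambda\varphi_{J}$, matching at each reflection, and using Duhamel with the density $g_{\varphi_{J}}$ and the truncated story $J(x,\xi,t-s)$ for $k\geq1$. Your attention to the truncated backward flow $\hat{X}$ and the reflection count is exactly the bookkeeping the cited construction carries out.
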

And, by the same proof again as in \cite{Schreodinger} it implies
in particular the following three results. The first of them is about
the support of the solutions:
\begin{lem}
\label{lem:supprought}For $x\in\mathcal{U}_{J}(\varphi)$
\begin{equation}
w_{k}^{J}(x,t)\neq0\implies(\hat{X}_{-2t}(x,|\xi|\nabla\varphi_{J}),\xi)\in\text{Supp}p.\label{eq:UinfJ}
\end{equation}
And moreover
\begin{equation}
\text{Supp}w_{k}^{J}\subset\left\{ J(x,\xi,t)=\emptyset\right\} .\label{eq:suppJ}
\end{equation}
\end{lem}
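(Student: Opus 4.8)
The plan is to read off both statements from the explicit formulas for $w_k^J$ in Proposition \ref{prop:solref}, by induction on $k$. First I would establish \eqref{eq:UinfJ}. For $k=0$ the formula $w_0^J(x,t)=\Lambda\varphi_J(x,\xi)\,p(\hat X_{-2t}(x,|\xi|\nabla\varphi_J),\xi)$ makes the claim immediate: the amplitude factor $\Lambda\varphi_J$ is built from ratios of the quantities $G\varphi_J$ along the broken backward flow and never vanishes where it is defined, so $w_0^J(x,t)\neq 0$ forces $p(\hat X_{-2t}(x,|\xi|\nabla\varphi_J),\xi)\neq 0$, i.e.\ $(\hat X_{-2t}(x,|\xi|\nabla\varphi_J),\xi)\in\operatorname{Supp}p$. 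For the inductive step, the formula
\[
w_k^J(x,t)=-i\int_0^t g_{\varphi_J}(x,t-s,\xi)\,\Delta w_{k-1}^{J(x,\xi,t-s)}\big(\hat X_{-2(t-s)}(x,|\xi|\nabla\varphi_J),s\big)\,ds
\]
shows that if $w_k^J(x,t)\neq 0$ then there is some $s\in[0,t]$ with $\Delta w_{k-1}^{J(x,\xi,t-s)}(\hat X_{-2(t-s)}(x,|\xi|\nabla\varphi_J),s)\neq 0$; in particular $w_{k-1}^{J(x,\xi,t-s)}$ is not identically zero near that point. The point here is that the backward flow $\hat X_{-2s}$ applied to $(\hat X_{-2(t-s)}(x,|\xi|\nabla\varphi_J),\xi)$ with story $J(x,\xi,t-s)$ reconstructs exactly $\hat X_{-2t}(x,|\xi|\nabla\varphi_J)$ — this is the semigroup/cocycle property of the (obstacle-skipping) backward billiard flow, which is where one must be slightly careful because of the bookkeeping in the definition of $J(x,\xi,t)$. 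Granting that, the inductive hypothesis applied to $w_{k-1}$ (and the fact that $\Delta$ does not enlarge supports) gives $(\hat X_{-2t}(x,|\xi|\nabla\varphi_J),\xi)\in\operatorname{Supp}p$, completing the induction.

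Next I would prove \eqref{eq:suppJ}, that $\operatorname{Supp}w_k^J\subset\{J(x,\xi,t)=\emptyset\}$, again by induction on $k$. Recall $J(x,\xi,t)=\emptyset$ means precisely that the backward ray $\hat X_{-2t}(x,|\xi|\nabla\varphi_J)$ has undergone all $|J|$ reflections of the story $J$ by time $t$ — equivalently, that enough time has elapsed for the wave, which was created only by the $|J|$-th reflection, to actually exist at $(x,t)$. For $k=0$ this is built into the transport construction: $w_0^J$ solves a transport equation along the rays with $w_0^J|_{t\le 0}=0$ and boundary data inherited at the $|J|$-th obstacle, so it is supported in the region reached from $\partial\Theta_{j_n}$ in forward time, which is exactly $\{J(x,\xi,t)=\emptyset\}$. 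For $k\geq 1$, the Duhamel-type formula for $w_k^J$ integrates against $w_{k-1}^{J(x,\xi,t-s)}$ evaluated at an intermediate point; by the inductive hypothesis that factor vanishes unless the \emph{remaining} story $J(x,\xi,t-s)$ has itself been fully traversed by the backward flow in time $s$, and combining the two reflection-counts (the $|J|-|J(x,\xi,t-s)|$ reflections accumulated in time $t-s$ and the $|J(x,\xi,t-s)|$ reflections accumulated in the remaining time $s$) shows the full story $J$ has been traversed by time $t$, i.e.\ $J(x,\xi,t)=\emptyset$. Again the only genuine content is the additivity of the reflection count along the decomposition of the backward flow.

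The main obstacle, such as it is, is purely notational rather than conceptual: one must verify carefully that the composite object $\hat X_{-2(t-s)}$ followed by $\hat X_{-2s}$ with the truncated story $J(x,\xi,t-s)$ agrees with $\hat X_{-2t}$ with story $J$, and that the associated reflection counts add correctly, so that the inductive hypothesis can be invoked at the right base point. Since Proposition \ref{prop:solref} is quoted verbatim from \cite{Schreodinger} and the flow $\hat X$ and the story map $J(x,\xi,t)$ are defined there with exactly these properties in mind, this verification is identical to the one carried out in that paper — the hypotheses used there (strict convexity of the obstacles, finitely many reflections in finite time, which holds here thanks to \lemref{2cross}) are all in force — so I would simply invoke that proof. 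There is no new difficulty arising from the $N$-convex setting, because once we have localized near a single primitive trajectory all the relevant geometry is local and matches the two-obstacle case line for line.
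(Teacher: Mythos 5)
Your proposal is correct and takes essentially the paper's route: the paper gives no separate proof of \lemref{supprought}, deducing it from the explicit formulas of Proposition \propref{solref} ``by the same proof as in \cite{Schreodinger}'', which is exactly the induction on $k$ you sketch (reading \eqref{eq:UinfJ} off the transport formulas via the cocycle property of the obstacle-skipping backward flow, and \eqref{eq:suppJ} from the zero initial data and inherited boundary data). The only place to be slightly more careful is the base case of \eqref{eq:suppJ}: ``reached from $\partial\Theta_{j_{n}}$ in forward time'' must be iterated through all $|J|$ boundary conditions (a recursion over the story length, not just over $k$) to conclude that the entire story has been unwound, which is precisely the reflection-count bookkeeping you already flag and which is carried out in \cite{Schreodinger}.
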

It implies that we can extend it by zero outside the domains of definition
of the phases:
\begin{prop}
\label{prop:solref2}For $x\notin\mathcal{U}_{J}(\varphi)$ and $0\leq t\leq T$
we have $w_{k}^{J}(x,t)=0$.
\end{prop}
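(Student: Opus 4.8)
The plan is to prove the slightly stronger fact that $\text{Supp}\,w_k^J$ is a compact subset of $\mathcal U_J(\varphi)\times[0,T]$ which, moreover, does not depend on $k$; once this is known, extending $w_k^J$ by $0$ outside $\mathcal U_J(\varphi)$ produces a smooth function and the asserted vanishing for $x\notin\mathcal U_J(\varphi)$ is immediate. Everything reduces to combining \lemref{supprought} with the geometry of the region $\mathcal U_J(\varphi)$ and of $\text{Supp}\,p$.

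First I would unwind \lemref{supprought} at a point where $w_k^J(x,t)\neq 0$, with $0\leq t\leq T$. By (\ref{eq:suppJ}) we have $J(x,\xi,t)=\emptyset$, i.e. the modified backward trajectory $\hat X_{-2t}(x,|\xi|\nabla\varphi_J)$ realises all $|J|$ reflections prescribed by $J$; consequently the ``hat'' plays no role, $\rho_0:=(\hat X_{-2t}(x,|\xi|\nabla\varphi_J),\xi)$ is a genuine point of the billiard flow, and $\Phi_{2t}(\rho_0)=(x,|\xi|\nabla\varphi_J(x,\xi))$ is reached from $\rho_0$ along the story $J$ read forward. By (\ref{eq:UinfJ}), moreover, $\rho_0\in\text{Supp}\,p$.

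Now $p=p_{I,T}$ is the symbol of $\psi(-h^2\Delta)A^\star Q_I^T$, so $\text{Supp}\,p\subset\text{Supp}\,\phi\cap\text{Supp}\,q_{I,T}\cap\{|\xi|\in[\alpha_0,\beta_0]\}$, which is compact; after possibly shrinking $\delta$ it is contained in $D_{I,4\delta}\subset\bigcup_{|l|\leq|I|-1}\mathcal U_{I,l}^\infty$ and, thanks to the non-shadow condition (\ref{eq:IK2}) together with \lemref{2cross}, it is bounded away from the tangent rays, so no ray issued from it grazes an obstacle within time $2T$. Hence on $\text{Supp}\,p\times[0,T]$ the reflection pattern of $\rho\mapsto\Phi_{2s}(\rho)$ is locally constant (equal to $J$ read forward), this map is continuous there, and by the very definition of $\mathcal U_J(\varphi)$ its spatial image $K_J:=\{\pi_x\Phi_{2s}(\rho):\rho\in\text{Supp}\,p,\ 0\leq s\leq T\}$ is a compact subset of $\mathcal U_J(\varphi)$, independent of $k$. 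By the previous paragraph $x\in K_J$ whenever $w_k^J(x,t)\neq 0$, hence $\text{Supp}\,w_k^J\subset K_J\times[0,T]$ and the proposition follows.

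The one genuinely delicate point --- the reason this is not purely formal --- is the bookkeeping behind ``the hat plays no role'': one must check that when $\hat X_{-2t}$ has performed the full story $J$ it has performed \emph{exactly} that story (in reverse), so that flowing $\rho_0$ forward reproduces the reflected phase $\varphi_J$ at $x$ and lands in $\mathcal U_J(\varphi)$ rather than in a neighbouring $\mathcal U_{J'}(\varphi)$, together with the fact that $K_J$ is genuinely compact uniformly over $0\leq t\leq T$ and over the $O(\epsilon|\log h|)$ reflections involved. These are exactly the verifications made in \cite{Schreodinger}; once \lemref{2cross} is available the argument there transfers unchanged to the present $N$-convex setting, which is why we merely invoke it here.
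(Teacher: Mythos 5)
Your strategy is the paper's: Proposition \ref{prop:solref2} is deduced from the support lemma \lemref{supprought} together with the microlocalization of $p=p_{I,T}$ near the trapped trajectory, with the remaining bookkeeping deferred to \cite{Schreodinger} --- which is exactly what the paper itself does (``by the same proof again as in \cite{Schreodinger}''). So the outline and the deferral are legitimate; however, one intermediate step is wrong as written and needs to be replaced by what your own first paragraph provides.

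The claim that $K_J=\{\pi_x\Phi_{2s}(\rho):\rho\in\text{Supp}\,p,\ 0\leq s\leq T\}$ is contained in $\mathcal{U}_J(\varphi)$ ``by the very definition'' of that domain does not hold: at $s=0$ the set already contains $\pi_x\,\text{Supp}\,p$, and more generally points reached after only a partial story, while all one knows is $D_{I,4\delta}\subset\bigcup_{|l|\leq|I|-1}\mathcal{U}_{I,l}^{\infty}$ --- a union over $l$, not the single domain attached to $J=rI+l$ (the domain of $\varphi_{J}$ only covers the region downstream of the last reflection prescribed by $J$, not the whole tube around $\gamma_I$); for the same reason the reflection pattern of $\Phi_{2s}$ on $[0,T]$ is not ``equal to $J$'' at intermediate times. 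What is actually needed, and what (\ref{eq:suppJ}) together with (\ref{eq:UinfJ}) gives, is the smaller set of points $x$ whose backward hat-trajectory has completed the \emph{whole} story $J$ and landed in $\text{Supp}\,p$; it is this set that one shows lies uniformly inside $\mathcal{U}_J(\varphi)$, using the margin between $D_{I,4\delta}$ and the domains $\mathcal{U}_{I,l}^{\infty}$ of \cite{MR1254820} and the uniform transversality of rays staying $4\delta$-close to $\gamma_I$. Relatedly, ``the hat plays no role'' is not accurate: after the $|J|$ reflections the hat-flow ignores the obstacles, so $\rho_0$ need not be joined to $x$ by a genuine billiard trajectory; the relevant object is the ray family of $\varphi_J$ (the flow-out defining $\mathcal{U}_J(\varphi)$), which is precisely the verification you defer to \cite{Schreodinger}. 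With $K_J$ replaced by that completed-story set, your argument coincides with the paper's.
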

And that the have $|J|\approx t$:
\begin{lem}
\label{lem:support}There exists $c_{1},c_{2}>0$ such that for every
$J\in\mathcal{I}$, the support of $w_{k}^{J}$ is included in $\left\{ c_{1}|J|\leq t\right\} $
and which of $\chi w_{k}^{J}$ is included in $\left\{ c_{1}|J|\leq t\leq c_{2}(|J|+1)\right\} $.
\end{lem}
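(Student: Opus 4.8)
The plan is to read off both inclusions from the support information already recorded in \lemref{supprought} --- relations (\ref{eq:UinfJ}) and (\ref{eq:suppJ}) --- together with the explicit backward flow $\hat{X}_{-2t}$ of \propref{solref}, using only two elementary geometric facts of the $N$-convex setting: the obstacles are pairwise separated, so $d_{0}:=\min_{i\neq j}\mathrm{dist}(\Theta_{i},\Theta_{j})>0$, and their convex hull has finite diameter $D:=\mathrm{diam}\,\text{Conv}(\cup_{i}\Theta_{i})$. Throughout I would use that along the broken flow the speed is frozen, $|\Xi_{s}|\equiv|\xi|\in[\alpha_{0},\beta_{0}]$ (because $p=p_{I,T}$ carries the factor $\psi(-h^{2}\Delta)$ and $|\nabla\varphi_{J}|\equiv1$), so as $r$ runs over $[0,t]$ the point $\hat{X}_{-2r}(x,|\xi|\nabla\varphi_{J})$ traces a broken segment of Euclidean length exactly $2|\xi|t\in[2\alpha_{0}t,2\beta_{0}t]$.

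For the lower bound: if $w_{k}^{J}(x,t)\neq0$, then $x\in\mathcal{U}_{J}(\varphi)$ and, by (\ref{eq:suppJ}), the backward trajectory has been reflected exactly $|J|$ times on $[0,t]$, at points $z^{(1)}\in\Theta_{j_{|J|}},z^{(2)}\in\Theta_{j_{|J|-1}},\dots,z^{(|J|)}\in\Theta_{j_{1}}$ in this order. Since $J=(j_{1},\dots,j_{|J|})\in\mathcal{I}$ satisfies $j_{m}\neq j_{m+1}$, consecutive reflection points sit on distinct obstacles, hence $|z^{(m)}-z^{(m+1)}|\geq d_{0}$; summing the $|J|-1$ internal segments, the traced broken segment has length at least $(|J|-1)d_{0}$, so $2\beta_{0}t\geq(|J|-1)d_{0}$ and $t\geq c_{1}|J|$ with $c_{1}:=d_{0}/(4\beta_{0})$ as soon as $|J|\geq2$. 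The two cases $|J|\in\{0,1\}$ are harmless: for $|J|=0$ the condition is vacuous, and for $|J|=1$ one uses that in the phase construction of \cite{MR1254820,Schreodinger} both $\text{Supp}\,p$ and $\mathcal{U}_{(j_{1})}(\varphi)$ lie at a fixed positive distance from $\Theta_{j_{1}}$. This gives $\text{Supp}\,w_{k}^{J}\subset\{c_{1}|J|\leq t\}$.

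For the upper bound on $\chi w_{k}^{J}$ I would feed in the two compactness constraints: $x\in\text{Supp}\,\chi$, and by (\ref{eq:UinfJ}) the endpoint $\hat{X}_{-2t}(x,|\xi|\nabla\varphi_{J})\in\text{Supp}\,p$, both fixed compact sets. Decompose the traced broken segment into the first piece (from $\text{Supp}\,p$ to $z^{(|J|)}$), the $|J|-1$ internal pieces $z^{(m+1)}z^{(m)}$, and the last piece (from $z^{(1)}$ to $x$). Each internal piece joins two points of the convex set $\text{Conv}(\cup_{i}\Theta_{i})$, hence lies in it and has length at most $D$; the two extreme pieces have length bounded by a fixed $R_{0}$ depending only on $\text{Supp}\,\chi$, $\text{Supp}\,p$ and the obstacles. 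Hence the total length is at most $(|J|-1)D+2R_{0}\leq(|J|+1)(D+2R_{0})$, so $2\alpha_{0}t\leq(|J|+1)(D+2R_{0})$, i.e. $t\leq c_{2}(|J|+1)$ with $c_{2}:=(D+2R_{0})/(2\alpha_{0})$. Together with the previous step this yields $\text{Supp}\,\chi w_{k}^{J}\subset\{c_{1}|J|\leq t\leq c_{2}(|J|+1)\}$.

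As the author's statement indicates, this is essentially the argument of \cite{Schreodinger}; the only ingredient genuinely particular to the $N$-obstacle case is the uniform separation $d_{0}>0$ and the uniform bound $D<\infty$, both immediate here. I therefore do not anticipate a real obstacle --- the only care required is in tracking the $|J|$ reflection points of the backward flow $\hat{X}_{-2t}$ and in disposing of the small-$|J|$ edge cases, neither of which is serious.
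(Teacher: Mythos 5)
Your argument is correct in substance and is essentially the intended one: the paper offers no proof of \lemref{support} beyond the sentence ``by the same proof again as in \cite{Schreodinger}'', and the underlying argument is exactly the geometry you use. Namely, on the support of $w_{k}^{J}$ the backward flow $\hat{X}_{-2t}$ of Proposition \propref{solref} has undergone all $|J|$ reflections by time $t$ (relation (\ref{eq:suppJ})) and ends in $\text{Supp}\,p$ (relation (\ref{eq:UinfJ})); since consecutive reflections occur on distinct obstacles, the $|J|-1$ intermediate chords each have length at least $d_{0}=\min_{i\neq j}\mathrm{dist}(\Theta_{i},\Theta_{j})>0$, while the speed is $2|\xi|\in[2\alpha_{0},2\beta_{0}]$, which gives the lower bound; and with the extra cutoff $\chi$ both endpoints lie in fixed compact sets and each intermediate chord lies in $\text{Conv}(\cup\Theta_{i})$, which gives the upper bound. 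This matches the (cited) proof.

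The one step you should not leave as written is the $|J|=1$ case of the lower bound. The claim that $\text{Supp}\,p$ and $\mathcal{U}_{(j_{1})}(\varphi)$ lie at a fixed positive distance from $\Theta_{j_{1}}$ is not justified: $\phi$ and $q_{I,T}$ are supported in neighbourhoods of (trapped subsets of neighbourhoods of) $\mathcal{K}$, and $\mathcal{K}$ contains points of $T^{\star}\partial\Omega$, so these supports reach the boundary; the reflected phase is likewise defined up to $\partial\Theta_{j_{1}}$. What your chord count genuinely yields is $2\beta_{0}t\geq(|J|-1)d_{0}$, i.e.\ $c_{1}(|J|-1)\leq t$, not $c_{1}|J|\leq t$ for $|J|=1$. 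This is harmless for the paper: every later use of the lemma (the counting bound (\ref{eq:wkJnonnul}), the bound (\ref{eq:rhokbound}) on $\rho_{k}(I)$, and the $J$-sums of the last section, all applied for $t\geq T_{0}>0$) only needs $|J|\lesssim 1+t$ together with the upper bound $t\leq c_{2}(|J|+1)$, so you can either state the first inclusion with $c_{1}(|J|-1)\leq t$ or restrict to $|J|\geq2$, rather than invoking an unproven separation of $\text{Supp}\,p$ from the obstacles. With that adjustment your proof is complete and coincides with the approach the paper imports from \cite{Schreodinger}.
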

Now, let us recall that $q=q_{I,T}$ where $I$ is a given primitive
trajectory. We have:
\begin{lem}
If $J$ is not of the form $rI+l$, then $w_{k}^{J}=0$ for $0\leq t\leq\epsilon|\log h|$.
\end{lem}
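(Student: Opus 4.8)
The statement to prove is: if the story of reflections $J$ is not of the form $rI + l$ (where $I$ is the fixed primitive trajectory defining $q = q_{I,T}$), then $w_k^J = 0$ for $0 \leq t \leq \epsilon|\log h|$. The plan is to combine the support localization already established in \lemref{supprought} and \lemref{support} with the microlocal localization built into the cut-off $q_{I,T}$, namely \eqref{eq:qIT}.

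First I would recall, from \lemref{support}, that on the time interval $0 \leq t \leq \epsilon|\log h|$ the support of $w_k^J$ forces $c_1|J| \leq t \leq \epsilon|\log h|$, so only stories of length $|J| \lesssim \epsilon|\log h|$ can possibly contribute; in particular $|J|$ is finite for fixed $h$ and the argument reduces to a statement about billiard trajectories of bounded combinatorial length. Next, by \lemref{supprought}, $w_k^J(x,t) \neq 0$ implies that $(\hat X_{-2t}(x,|\xi|\nabla\varphi_J),\xi) \in \mathrm{Supp}\, p$, where $p = p_{I,T}$ is the symbol of $P_I^T = \psi(-h^2\Delta)A^\star Q_I^T$. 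Since $q_{I,T}$ vanishes outside $\mathcal{T}_T(D_{I,4\delta})$ by \eqref{eq:qIT}, the point $(\hat X_{-2t}(x,|\xi|\nabla\varphi_J),\xi)$ must lie in the trapped set $\mathcal{T}_T(D_{I,4\delta})$ of a $4\delta$-neighborhood of $\gamma_I$; tracing this point forward under the flow for time $\sim t$ (equivalently, reading off the reflection pattern of $\hat X$, which for $x\in\mathcal U_J(\varphi)$ encodes precisely the combinatorics of $J$), it must follow the reflection pattern of $\gamma_I$ to within $4\delta$ for all intermediate times up to $T$.

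Then the key step is the combinatorial one: a billiard trajectory that stays within $4\delta$ of the periodic orbit $\gamma_I$ for a time corresponding to $|J|$ reflections must, for $\delta$ small enough, hit the \emph{same} sequence of obstacles as $\gamma_I$ — and since $\gamma_I$ realizes the primitive story $I$ cyclically, that sequence of obstacles is exactly $I$ repeated some number of times, possibly with a partial copy at the start and end. In other words the story of reflections of $\hat X_{-2t}(x,|\xi|\nabla\varphi_J)$, which equals $J$ up to the conventions in the definition of $\hat X$ (ignoring the initial obstacle if it is not $\Theta_{j_n}$ and truncating after $|J|$ reflections), is forced to be of the form $r I + l$. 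This uses that $\delta$ has been chosen small enough (as announced before \eqref{eq:qIT} and in the microlocal cut-off subsection) that the $4\delta$-tube around $\gamma_I$ meets no obstacle other than those appearing in $I$, which is where the strict convexity of the obstacles and the non-shadow condition \eqref{eq:IK2} enter. Hence if $J$ is \emph{not} of the form $rI + l$, the support condition of \lemref{supprought} cannot be met for any $0\leq t\leq\epsilon|\log h|$, and therefore $w_k^J \equiv 0$ on that interval.

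The main obstacle is making the combinatorial rigidity precise: one must quantify "$4\delta$-close to $\gamma_I$ for $\sim |J|$ reflections forces the same obstacle sequence." I would extract this from \lemref{distm} (the Hölder divergence estimate $d(\Phi_{t'}(\tilde x,\tilde\xi),\Phi_{t'}(x,\xi)) \leq C^{t'} d(\cdot,\cdot)^\mu$) together with \lemref{choixtau}: a point in $\mathcal{T}_T(D_{I,4\delta})$ stays $4\delta$-close to $\gamma_I$, and over the logarithmic time scale $T \leq \epsilon|\log h|$ the factor $C^{t'}$ is only polynomial in $h^{-1}$, so after shrinking $\delta$ the trajectory cannot jump to a geometrically distinct obstacle configuration — the only possibilities are to shadow $\gamma_I$ exactly in its combinatorics. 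Once the obstacle sequence of $\hat X$ is pinned to a sub-pattern of the periodic word $I^\infty$, being of the form $rI + l$ is immediate from the definition of a primitive story. This is exactly the argument carried out in the analogous step of \cite{Schreodinger} for the two-obstacle case, and the only new ingredient needed in the $N$-convex setting — the fact that a ray shadowing $\gamma_I$ does not stray onto an extraneous obstacle — is precisely what \eqref{eq:IK2} and the smallness of $\delta$ guarantee.
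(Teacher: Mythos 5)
Your argument is correct and follows essentially the same route as the paper: use Lemma \ref{lem:supprought} to produce a broken ray following the story $J$ that joins $(x,|\xi|\nabla\varphi_{J})$ to a point of $\mathrm{Supp}\,p_{I,T}\subset\mathcal{T}_{T}(D_{I,4\delta})$, conclude that this ray stays in the $4\delta$-neighbourhood of the trajectory associated to $I$, and hence that $J$ must be of the form $rI+l$. The extra appeal you make to Lemmas \ref{lem:distm} and \ref{lem:choixtau} only elaborates the final combinatorial step, which the paper simply asserts from the smallness of $\delta$.
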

\begin{proof}
If $w_{k}^{J}(x,\xi)\neq0$, it follows from \lemref{supprought}
that there exists a broken ray joining $(x,|\xi|\nabla\varphi_{J})$
and a point of the support of $p_{I,T}$ in time $t$ following the
complete story of reflexions $J$. By definition of the trapped set
and because $\text{Supp}p\subset\mathcal{T}_{T}(D_{I,4\delta})$,
this broken ray remains in a neighborhood of the trajectory $\gamma$
corresponding to $I$, thus $J$ can only be of the form $rI+l$. 
\end{proof}
Finally, let us notice that

\begin{lem}
\label{lem:supportchi}In times $0\leq t\leq T$, for $J=rI+l$, $\chi w_{k}^{J}$
is supported in $\mathcal{U}_{I,l}^{\infty}$.
\end{lem}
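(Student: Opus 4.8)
The plan is to intersect the support constraints already established for $w_k^J$ with the localization of the data $p=p_{I,T}$ near the periodic trajectory $\gamma$ associated with $I$. By Proposition~\ref{prop:solref2}, $w_k^J$ vanishes outside $\mathcal{U}_J(\varphi)$, so it suffices to locate its support there, where the explicit formulas of Proposition~\ref{prop:solref} apply. By Lemma~\ref{lem:supprought}, if $w_k^J(x,t)\neq0$ with $0\le t\le\epsilon|\log h|$ then the backward broken ray $s\mapsto\hat{X}_{-2s}(x,|\xi|\nabla\varphi_J)$ follows exactly the story $J=rI+l$, has already performed all $|J|=r|I|+|l|$ of its reflections before reaching $x$ (this is (\ref{eq:suppJ}), $J(x,\xi,t)=\emptyset$), and reaches $\mathrm{Supp}\,p\subset\mathcal{T}_T(D_{I,4\delta})$ at time $s=t$.

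I would then reuse the argument from the proof of the preceding lemma — the one that already shows $J$ must be of the form $rI+l$ — carried one step further: since $\mathrm{Supp}\,p$ lies in the $4\delta$-tube $\mathcal{T}_T(D_{I,4\delta})$ around $\gamma$ and the broken ray in question follows the (nearly periodic) story $J$, it stays in a small neighborhood of $\gamma$, and in particular its endpoint $(x,|\xi|\nabla\varphi_J)$ lies in $D_{I,4\delta}$; the bound $\mathrm{Supp}\,\chi w_k^J\subset\{c_1|J|\le t\le c_2(|J|+1)\}$ of Lemma~\ref{lem:support} is what prevents $x$ from being carried off along a long final free flight. Since $\delta$ has been fixed so that $D_{I,4\delta}\subset\bigcup_{|l'|\le|I|-1}\mathcal{U}_{I,l'}^{\infty}$, this already gives $x\in\mathcal{U}_{I,l'}^{\infty}$ for some admissible $l'$.

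To conclude, I would identify $l'=l$ by a reflection count: the chart $\mathcal{U}_{I,l'}^{\infty}$ is attached to the point of $\gamma$ reached $|l'|$ reflections past a full period, whereas we have just seen that the backward ray from $x$ undergoes exactly $r$ full periods followed by the partial story $l$ of $J=rI+l$, and that $t\ge c_1|J|$ on $\mathrm{Supp}\,\chi w_k^J$ places $x$ beyond that last reflection — forcing the index $l$. I expect the only delicate point to be exactly this matching between the geometric tube $D_{I,4\delta}$ around the periodic orbit and the analytically defined domains $\mathcal{U}_{I,l}^{\infty}$ of the reflected phases (and the mild control of the final free-flight segment), which, after a last shrinking of $\delta$ and using $T_0\ge2\beta_0 M$, reduces to Lemmas~\ref{lem:supprought} and~\ref{lem:support} together with the inclusion of tubes into phase domains recalled above — so that, as for the preceding support lemmas, the proof is the same as in \cite{Schreodinger}.
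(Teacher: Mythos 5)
There is a genuine gap, and it sits exactly where the paper's own (two-line) proof puts its weight. Recall from Proposition \ref{prop:solref} that the transport defining $w_k^J$ uses $\hat{X}$, i.e.\ the flow which performs the $|J|$ reflections of the story $J$ and \emph{then ignores the obstacles}: the support of $w_k^J$ at time $t$ therefore contains, besides the legitimate piece carried by the true billiard flow, a ``ghost'' piece consisting of points reached by the final free flight after the last reflection, continuing straight through the place where the next obstacle of the (quasi-)periodic story sits and into the region beyond it. For those points your key assertion --- that the endpoint $(x,|\xi|\nabla\varphi_J)$ lies in $D_{I,4\delta}$ because $\mathrm{Supp}\,p\subset\mathcal{T}_T(D_{I,4\delta})$ --- is simply false: the trapped-set property controls the \emph{true} flow $\Phi_t$, which would reflect again, not the continuation that ignores the obstacle. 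Your attempted fix via Lemma \ref{lem:support} does not close this: the window $c_1|J|\le t\le c_2(|J|+1)$ still allows a final free segment of length of order one, which is ample time to traverse the gap between obstacles, cross the ignored obstacle and emerge behind it, i.e.\ to leave $D_{I,4\delta}$ (and possibly $\mathcal{U}_{I,l}^{\infty}$) while $t$ is still admissible.

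The missing ingredient is the no-shadow condition (\ref{eq:IK2}), which you never invoke and which is precisely what this lemma is for: the paper argues that the support of $w_k^J$ is the support of $q(\cdot,\xi)$ transported along the story $J$ and then ignoring the obstacles, and that by (\ref{eq:IK2}) this ignored-obstacle part lies outside the support of $\chi$, so multiplying by $\chi$ removes it; what remains is carried by the genuine broken ray near $\gamma$ and sits in $\mathcal{U}_{I,l}^{\infty}$ (here your concluding reflection count identifying the index $l$ is fine, and is implicit in the paper). Without (\ref{eq:IK2}) one cannot rule out that the ghost segment, after passing through the ignored obstacle, re-enters a neighborhood of the trapped set where $\chi=1$ but where the phase $\varphi_J$ and the estimates attached to $\mathcal{U}_{I,l}^{\infty}$ are unavailable --- this is exactly the ``shadow'' scenario the hypothesis excludes, and the paper's remark after Lemma \ref{lem:distm} stresses that this condition is not merely decorative. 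So the proposal needs to be repaired by inserting the cut-off-of-the-ghost-part step via (\ref{eq:IK2}) rather than the claimed inclusion in $D_{I,4\delta}$.
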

\begin{proof}
From (\ref{eq:UinfJ}), the support of $w_{k}^{J}$ consists of the
support of $q(.,\xi)$, transported along the billiard flow with initial
direction $\xi$ along the story of reflexion $J$ and then ignoring
the obstacles. Because of the non-shadow condition (\ref{eq:IK2}),
the part ignoring the obstacles is cut off by $\chi$, thus we obtain
the result.
\end{proof}

\subsection{The $\xi$ derivatives}

The following results about the directional derivatives of the phase
and the solution has been proven in \cite{Schreodinger}, where the
proof does not involve the particular two obstacles geometry. The
first one involves the critical points of the phase and its non-degeneracy:
\begin{lem}
\label{lem:nondeg}Let $J\in\mathcal{I}$ and $\mathcal{S}_{J}(x,t,\xi):=\varphi_{J}(x,\xi)|\xi|-t\xi^{2}$.
For all $t>0$ and there exists at most one $s_{J}(x,t)$ such that
$D_{\xi}\mathcal{S}_{J}(x,t,s_{J}(x,t))=0$. Moreover, for all $t_{0}>0$,
there exists $c(t_{0})>0$ such that, for all $t\geq t_{0}$ and all
$J\in\mathcal{I}$
\begin{equation}
w^{J}(x,t,\xi)\neq0\implies|\det D_{\xi}^{2}\mathcal{S}_{J}(x,t,\xi)|\geq c(t_{0})>0.\label{eq:detpo}
\end{equation}
\end{lem}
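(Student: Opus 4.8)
The statement concerns the phase function $\mathcal{S}_J(x,t,\xi) = \varphi_J(x,\xi)|\xi| - t\xi^2$ and has two parts: uniqueness of the critical point in $\xi$, and a uniform lower bound on the Hessian determinant along the support of $w^J$. I would begin with the critical point equation $D_\xi \mathcal{S}_J = |\xi|\, D_\xi \varphi_J + \varphi_J \frac{\xi}{|\xi|} - 2t\xi = 0$. The geometric content is that $\nabla_x \varphi_J$ is, up to normalization, the direction of the ray at $x$ after the reflection story $J$, so that $|\xi|\nabla_x\varphi_J$ traces the billiard flow; the critical point condition in $\xi$ expresses that the broken ray from the support of $p$ to $x$ in time $t$ following story $J$ has its initial direction equal to $\xi/|\xi|$. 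Since the phase $\varphi(x,\xi) = (x-y)\cdot\xi/|\xi|$ is linear in $x$ with gradient $\xi/|\xi|$, and the reflection operations (unfolding across strictly convex obstacles) are smooth diffeomorphisms on the domains $\mathcal{U}_{I,l}^\infty$, the map $\xi \mapsto \nabla_x\varphi_J(x,\xi)$ is well-controlled; uniqueness of $s_J(x,t)$ then follows from the fact that the billiard flow from a fixed point $x$ backward in a fixed time $t$, parametrized by the final direction, is injective onto the set of admissible initial data (two distinct final directions at $x$ give distinct backward rays).

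The non-degeneracy bound is the harder part. The cleanest route is to relate $D_\xi^2\mathcal{S}_J$ to the derivative of the billiard flow: differentiating the critical point equation, $\det D_\xi^2 \mathcal{S}_J$ is, up to a nonvanishing Jacobian factor coming from the change of variables $\xi \mapsto |\xi|\nabla_x\varphi_J$, the determinant of the differential of the map sending the initial direction to the spatial position $\hat X_{-2t}(x, |\xi|\nabla\varphi_J)$ — equivalently, it measures the spreading of the wave front. For the billiard flow outside strictly convex obstacles this spreading is controlled by the curvature of the reflected wavefronts: after each reflection on a strictly convex obstacle the Gaussian curvature $G\varphi_J$ of the phase level sets is strictly positive and, crucially, the product structure in $\Lambda\varphi_J$ and $g_{\varphi_J}$ from Proposition~\ref{prop:solref} shows these curvatures stay bounded away from $0$ and $\infty$ once $t \geq t_0 > 0$ (one reflection suffices to regularize). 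I would therefore express $\det D_\xi^2\mathcal{S}_J$ in terms of the $G\varphi_{J}$ factors along the ray, and invoke the standard fact (already used implicitly in \cite{Schreodinger}) that for strictly convex obstacles these curvature factors admit uniform two-sided bounds depending only on $t_0$ and the obstacle geometry, not on $|J|$ nor on the particular reflection story — this is exactly the uniformity that makes the logarithmic-time parametrix work.

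The main obstacle is obtaining the bound \emph{uniformly in $J\in\mathcal{I}$}, i.e.\ independently of the number of reflections. For two convex obstacles this is the classical contraction estimate on the space of curvatures under the reflection map; for the $N$-convex case under Ikawa's condition the same contraction holds along any admissible story because each segment $S_{k_j}$ joins two of the finitely many obstacles and the relevant constants (minimal obstacle curvature, maximal and minimal inter-obstacle distances) range over a finite set. So I would: (i) record the reflection law for the curvature of the phase, (ii) show it is an eventual contraction toward an interval $[\kappa_-,\kappa_+]$ with $0<\kappa_-\le\kappa_+<\infty$ depending only on the geometry, (iii) translate this into a two-sided bound on each $G\varphi_J$ factor evaluated along points of a nondegenerate (non-tangent, by the choice of $\phi$ supported near $\mathcal{K}$ away from $W_{\mathrm{tan}}$) ray for $t\ge t_0$, and (iv) assemble these into the determinant bound. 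Since the proof of \cite{Schreodinger} does not use the two-obstacle structure beyond these curvature bounds, and those bounds are furnished here by strict convexity plus \eqref{eq:IK2} (which keeps the rays in $\mathcal{U}_{I,l}^\infty$ and away from shadows), the argument transfers verbatim; I would simply cite \cite{Schreodinger} for the computation and emphasize that the only input needed — a ray meeting the boundary only at hyperbolic points, guaranteed by Lemma~\ref{lem:supportchi} and the support choice — is available in the present setting.
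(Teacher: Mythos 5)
Your proposal is correct and takes essentially the same route as the paper: the paper offers no independent proof of Lemma~\ref{lem:nondeg}, but simply invokes the corresponding result of \cite{Schreodinger} together with the observation that its proof nowhere relies on the two-obstacle geometry, which is exactly the conclusion you reach after sketching the curvature/wavefront-spreading mechanism behind the Hessian bound. Your added sketch (uniform two-sided bounds on the reflected-phase curvatures off strictly convex obstacles, uniform in the story $J$, with degeneracy only as $t\to 0$) is consistent with what justifies that transfer.
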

The last two permits to control the directional derivatives of the
solutions:
\begin{prop}
\label{prop:decder}For all multi-indices $\alpha,\beta$ there exists
a constant $D_{\alpha,\beta}>0$ such that the following estimate
holds on $\mathcal{U}_{I,l}^{\infty}$:
\[
|D_{\xi}^{\alpha}D_{x}^{\beta}\nabla\varphi_{J}|\leq D_{\alpha,\beta}^{|J|}.
\]
\end{prop}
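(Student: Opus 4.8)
The plan is to argue by induction on the length $|J|$ of the story of reflexions, exploiting the recursive construction of the reflected phases recalled in \cite{MR1254820,Schreodinger}. The base case $J=\emptyset$ is immediate: $\varphi(x,\xi)=(x-y)\cdot\xi/|\xi|$, so $\nabla\varphi=\xi/|\xi|$, whose $x,\xi$ derivatives are all bounded on the region at hand (compact in $x$, with $|\xi|\in[\alpha_{0},\beta_{0}]$); one fixes $D_{\alpha,\beta}\geq1$ accordingly. For the inductive step, write $J=J'j$. On $\mathcal{U}_{J'j}(\varphi)$ the gradient of the reflected phase is obtained from that of $\varphi_{J'}$ by the reflection law: tracing the free ray through $x$ back to its foot point $z=z(x,\xi)\in\partial\Theta_{j}$, and using that $\nabla\varphi$ is constant along free ray segments,
\[
\nabla\varphi_{J'j}(x,\xi)=\nabla\varphi_{J'}\big(z(x,\xi),\xi\big)-2\Big(\nabla\varphi_{J'}\big(z(x,\xi),\xi\big)\cdot n\big(z(x,\xi)\big)\Big)\,n\big(z(x,\xi)\big).
\]

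Differentiating this identity and expanding by the multivariate chain rule (Fa\`a di Bruno), $D_{\xi}^{\alpha}D_{x}^{\beta}\nabla\varphi_{J'j}$ becomes a universal polynomial whose arguments are: the derivatives of the unit normal $n$ of $\partial\Theta_{j}$, bounded uniformly over the finitely many smooth obstacles; the derivatives of the foot-point map $(x,\xi)\mapsto z(x,\xi)$; and the derivatives $D_{\xi}^{\alpha'}D_{x}^{\beta'}\nabla\varphi_{J'}$ with $|\alpha'+\beta'|\leq|\alpha+\beta|$ evaluated at $z(x,\xi)$, which the induction hypothesis bounds by $D_{\alpha',\beta'}^{|J'|}$. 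Crucially, the top-order term $D_{\xi}^{\alpha}D_{x}^{\beta}\nabla\varphi_{J'}$ occurs linearly, with a coefficient built only from $n$ and from first derivatives of $z$, so each reflection multiplies the previous bound by at most a fixed factor; all other terms are products of strictly lower-order derivatives.

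The one place where the hypotheses of Theorem \ref{th} are used is the control, \emph{uniform in $J'$}, of the foot-point map $z(x,\xi)$ and its derivatives on $\mathcal{U}_{I,l}^{\infty}$. This is where it matters that the phases are only needed near the trapped set: for $I$ a primitive trapped story and $J=rI+l$, the domain $\mathcal{U}_{I,l}^{\infty}$ sits in a fixed neighbourhood of the periodic trajectory attached to $I$, so the broken rays involved stay in a fixed compact neighbourhood of the convex hull of $\cup\Theta_{i}$, the consecutive reflection points are uniformly separated, and the incidence angles are uniformly bounded away from tangency (the trapped periodic rays meet the obstacles transversally, as used in the choice of $\phi$ and in \lemref{2cross}). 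Strict convexity then bounds the curvatures of the $\partial\Theta_{i}$ uniformly above and below, and the implicit function theorem produces $z(x,\xi)$ with derivative bounds depending only on this fixed geometric data and on $\alpha,\beta$, not on $J'$. Feeding these into the expansion, a straightforward induction — first on $|\alpha+\beta|$, then on $|J|$ — gives $|D_{\xi}^{\alpha}D_{x}^{\beta}\nabla\varphi_{J'j}|\leq D_{\alpha,\beta}^{|J|}$ for a suitable $D_{\alpha,\beta}$. This is exactly the bookkeeping of the two-obstacle case in \cite{Schreodinger}; the $N$-obstacle geometry enters only through the finiteness of the family and through condition (\ref{eq:IK2}), which ensures the ignored-obstacle parts of the construction do not interfere.

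The main obstacle is precisely securing the uniformity in $J'$ of the per-step amplification constant: a priori, composing $|J'|$ reflections could force super-exponential (for instance factorial) growth of the derivatives of $\nabla\varphi_{J}$, and ruling this out rests entirely on the quantitative geometric estimates — uniform upper and lower curvature bounds, uniform transversality of incidence, uniformly bounded segment lengths — which are available only because one works on the localized domains $\mathcal{U}_{I,l}^{\infty}$ near the trapped trajectories, and not on the whole exterior $\Omega$.
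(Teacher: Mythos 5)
The paper does not reprove this proposition: it simply invokes the proof given in \cite{Schreodinger} (itself following Ikawa's and Burq's phase analysis), remarking that that argument nowhere uses the two-obstacle geometry. Your attempt is therefore an independent reconstruction, and it contains a genuine gap at exactly the point you yourself flag as ``the main obstacle''. The foot-point map $z(x,\xi)$ is not determined by the fixed geometry of the obstacles: $z$ is the backward projection of $x$ along the outgoing ray field of the reflected phase, i.e.\ it solves $x=z+s\,N(z)$ where $N(z)$ is the reflection of $\nabla\varphi_{J'}(z)$ at $\partial\Theta_{j}$. Differentiating this relation gives $D z=(I+s\,DN)^{-1}(\cdots)$, so the derivatives of $z$ involve the Hessian (and, for higher $\alpha,\beta$, higher derivatives) of $\varphi_{J'}$ itself. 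Your assertion that the implicit function theorem yields bounds on $z$ and its derivatives ``depending only on fixed geometric data and not on $J'$'' therefore begs the question: uniform transversality and the curvature bounds of the $\partial\Theta_{i}$ alone do not prevent the wavefronts of $\varphi_{J'}$ from degenerating (caustics, converging rays) after many reflections, which would make $(I+s\,DN)^{-1}$, hence the per-step amplification factor, blow up with $|J'|$.

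What closes this loop in the cited proofs is precisely the structural input you never invoke: the phases are kept in Ikawa's class $(P)$ (uniformly convex, diverging wavefronts), and reflection off a strictly convex obstacle preserves $(P)$ with second fundamental forms of the wavefronts bounded above and below uniformly in $J$. This is the mechanism behind Proposition \propref{convL} and behind the bound $|\nabla\varphi_{J}|_{m}\leq C_{m}|\nabla\varphi|_{m}$ from \cite{Ikawa2} used later in the paper; it is what makes the backward ray map non-degenerate with $J$-independent bounds, after which your Fa\`a di Bruno bookkeeping (induction first on $|\alpha+\beta|$, then on $|J|$, absorbing products of lower-order exponential bounds by enlarging $D_{\alpha,\beta}$) does close. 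So the skeleton of your induction is compatible with the actual argument, but the decisive ingredient --- the $(P)$-invariance and uniform wavefront-curvature control under reflections, on the domains $\mathcal{U}_{I,l}^{\infty}$ of \cite{MR1254820} --- is missing, and without it the claimed fixed per-reflection factor, and hence the bound $D_{\alpha,\beta}^{|J|}$, is not established.
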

\begin{cor}
\label{cor:boundsdirect}We following bounds hold on $\mathcal{U}_{I,l}^{\infty}$
\[
|D_{\xi}^{\alpha}w_{k}^{J}|\lesssim C_{\alpha}^{|J|}h^{-(2k+|\alpha|)c\epsilon}.
\]
\end{cor}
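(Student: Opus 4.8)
The plan is to prove, by induction on $k$, the stronger joint estimate
\[
|D_\xi^\alpha D_x^\beta w_k^J| \lesssim C_{\alpha,\beta}^{|J|}\, h^{-(2k+|\alpha|+|\beta|)c\epsilon}\qquad\text{on }\mathcal{U}_{I,l}^\infty,
\]
of which the corollary is the case $\beta=0$; the $x$-derivatives must be carried along because the Duhamel recursion in \propref{solref} contains a Laplacian. The guiding principle is a dichotomy: a derivative falling on a phase or flow factor is cheap, producing only a geometric factor bounded by $D^{|J|}$ on $\mathcal{U}_{I,l}^\infty$ (via \propref{decder} together with the smooth dependence of the truncated backward billiard flow $\hat X_{-2t}$ on its initial data), whereas a derivative falling on the amplitude $p=q_{I,T}$ costs a power $h^{-c\epsilon}$ by the a priori bound (\ref{eq:contrq}). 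The surplus $2k$ in the exponent is exactly the price of the $k$ stacked Laplacians.

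For $k=0$, write $w_0^J=\Lambda\varphi_J(x,\xi)\,p\big(\hat X_{-2t}(x,|\xi|\nabla\varphi_J),\xi\big)$. Every factor of $\Lambda\varphi_J$ is the square root of a ratio of two $G\varphi$'s evaluated at flow points, so all its $(x,\xi)$-derivatives are $O(D^{|J|})$ by \propref{decder} and a lower bound for $G\varphi_J$ on $\mathcal{U}_{I,l}^\infty$; likewise the inner map $(x,\xi)\mapsto\hat X_{-2t}(x,|\xi|\nabla\varphi_J)$ has all derivatives $O(D^{|J|})$. Expanding $D_\xi^\alpha D_x^\beta w_0^J$ by the Leibniz rule and the multivariate chain rule produces a finite sum, with combinatorial weights depending only on $|\alpha|+|\beta|$, of products of such cheap factors times a derivative of $p$ of order at most $|\alpha|+|\beta|$, which by (\ref{eq:contrq}) contributes $h^{-(|\alpha|+|\beta|)c\epsilon}$ after absorbing the constant $2$ into $c$. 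This settles the base case.

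For $k\ge1$, apply $D_\xi^\alpha D_x^\beta$ under the integral in
\[
w_k^J(x,t)=-i\int_0^t g_{\varphi_J}(x,t-s,\xi)\,\Delta w_{k-1}^{J(x,\xi,t-s)}\big(\hat X_{-2(t-s)}(x,|\xi|\nabla\varphi_J),s\big)\,ds,
\]
distribute by the Leibniz and chain rules, bound the derivatives of $g_{\varphi_J}$ and of the inner flow map by $O(D^{|J|})$ as above, and for the factor $\Delta w_{k-1}^{J(x,\xi,t-s)}$ invoke the induction hypothesis: $\Delta$ adds two $x$-derivatives and the truncated story satisfies $|J(x,\xi,t-s)|\le|J|$, so this contributes $C_{\alpha',\beta'}^{|J|}h^{-(2(k-1)+2+|\alpha'|+|\beta'|)c\epsilon}$ with $|\alpha'|\le|\alpha|$ and $|\beta'|\le|\beta|+2$, i.e.\ precisely $h^{-(2k+|\alpha|+|\beta|)c\epsilon}$. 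The $s$-integral runs over $[0,t]$ with $t\le\epsilon|\log h|$, hence only inserts a factor $\lesssim|\log h|$, absorbed into the $h$-losses. The step demanding the most care is the constant bookkeeping: one must check that the base $C_{\alpha,\beta}$ of the geometric factor can be chosen so as to absorb the chain-rule coefficients (which depend only on $|\alpha|+|\beta|$, not on $k$ or $|J|$), and that the position-dependence of the reflexion story $J(x,\xi,t-s)$, being of length $\le|J|$, never degrades the exponent of $D$; this is the main, if essentially combinatorial, obstacle. Since $|J|\lesssim t$ by \lemref{support}, the bound also yields the quantitative form needed in the sequel.
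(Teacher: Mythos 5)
Your argument is correct and is essentially the proof the paper intends: the paper itself gives no proof here but imports it from \cite{Schreodinger}, and that proof is exactly this induction on $k$ using the explicit formulas of Proposition \propref{solref}, the Leibniz/chain rule, Proposition \propref{decder} (plus Proposition \propref{convL} and smoothness of the truncated flow) for the geometric factors growing like $D^{|J|}$, and (\ref{eq:contrq}) for each derivative landing on the symbol $p_{I,T}$. The two loose ends you flag or gloss over -- the factor $2$ in (\ref{eq:contrq}) and the $|\log h|^{k}$ accumulated from the iterated $s$-integrals -- are indeed harmless, since both are absorbed by adjusting the constant $c$ (equivalently $\epsilon$), exactly as the paper does implicitly, and the later use of the bound in Section 5 only needs $c\epsilon$ small.
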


\subsection{Decay of the reflected solutions}

The principal result which permits us to estimate the decay of the
reflected solutions is the convergence of the product of the Gaussian
curvatures $\Lambda\varphi_{J}$ obtained by \cite{Ikawa2,IkawaMult}
and \cite{plaques}. It writes, in this setting
\begin{prop}
\label{prop:convL}Let $0<\lambda_{I}<1$ be the square-root of the
product of the two eigenvalues lesser than one of the Poincaré map
associated with the periodic trajectory $I$. Then, there exists $0<\alpha<1$
and a $C^{\infty}$ function $a_{I,l}$ defined in $\mathcal{U}_{I,l}^{\infty}$,
such that, for all $J=rI+l$, we have
\[
\underset{\mathcal{U}_{I,l}^{\infty}}{\sup}|\Lambda\varphi_{J}-\lambda_{I}^{r}a_{I,l}|_{m}\leq C_{m}\lambda_{I}^{r}\alpha^{|J|}.
\]
\end{prop}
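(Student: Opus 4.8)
The plan is to read $\Lambda\varphi_{J}$ as the leading geometric--optics amplitude carried by the reflected wavefront after the story $J=rI+l$ — it is exactly the coefficient appearing in $w_{0}^{J}$ in Proposition~\ref{prop:solref} — and to run Ikawa's convergence argument for products of Gaussian curvatures near a periodic billiard trajectory \cite{Ikawa2,IkawaMult,plaques}, while tracking the dependence on $r$ and $l$. Concretely, $\Lambda\varphi_{J}$ is a product of $\sim|J|$ elementary factors, each a smooth positive function — uniformly bounded above and below, by strict convexity — of the second fundamental form $S=S\varphi_{J_{m}}$ of the reflected wavefront at two consecutive points of the backward ray; along a free leg $S$ evolves by the Riccati/Möbius flow $S\mapsto(S^{-1}+tI)^{-1}$ and $G\varphi=\det S$, while at a bounce off a strictly convex $\Theta_{i}$ the reflection law sends the incoming $S$ to the outgoing one by \emph{adding} a positive-definite term built from the positive curvature of $\partial\Theta_{i}$.

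First I would establish the dynamical core: the composite ``propagate then reflect, over one full period of $I$'' map $\mathcal{F}_{I}$ on the cone of symmetric positive-definite $2\times2$ matrices is a strict contraction (in the Hilbert/Thompson metric of the cone — this is Sinai's defocusing mechanism for convex billiards), with a unique fixed point $S_{I}^{+}$, the invariant unstable wavefront of $I$, and a contraction rate $\alpha\in(0,1)$ governed by the contracting eigenvalues $\mu_{I},\mu_{I}'$ of the Poincaré map; and the amplitude carried over one period by the invariant wavefront is exactly $\lambda_{I}=\sqrt{\mu_{I}\mu_{I}'}$, by conservation of flux through ray tubes whose unstable cross-section is scaled by $(\mu_{I}\mu_{I}')^{-1}$ per turn.

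Then, since $J=rI+l$, the backward ray issued from any point of $\mathcal{U}_{I,l}^{\infty}$ shadows $I$ for its $r$ turns, so by the contraction the curvature at the $m$-th reflection point is within $O(\alpha^{\min(m,\,|J|-m)})$ of the invariant value, uniformly in $J$. Grouping the factors of $\Lambda\varphi_{J}$ into the $r$ blocks of length $|I|$ plus the $l$-tail and replacing the true curvatures by the invariant ones, each block contributes exactly $\lambda_{I}$ up to a multiplicative error geometrically small in its distance to the ends; the geometric series in the tails assembles into a single $C^{\infty}$ limit $a_{I,l}$ on $\mathcal{U}_{I,l}^{\infty}$ — depending only on the finite tail $l$, the part of the ray that never settles onto $S_{I}^{+}$ — and one obtains $\Lambda\varphi_{J}=\lambda_{I}^{r}a_{I,l}(1+O(\alpha^{|J|}))$, which is the stated $C^{0}$ bound. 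To pass from $C^{0}$ to $C^{m}$ I would differentiate the product in $(x,\xi)$: the derivatives of the wavefront curvature with respect to initial data obey the same contraction — this is the dynamical content behind Proposition~\ref{prop:decder} — so the $\xi$-derivative bounds there, combined with the product rule, propagate the geometric decay to all derivatives with constants $C_{m}$ independent of $r,l$.

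The main obstacle is precisely this dynamical core: proving that $\mathcal{F}_{I}$ is genuinely contracting with rate tied to $\lambda_{I}$, and that the invariant wavefront carries exactly the factor $\lambda_{I}$ per period — i.e. transcribing the hyperbolicity of the billiard near $I$ into the curvature language. The remainder — the telescoping, the construction of $a_{I,l}$, the $C^{m}$ bookkeeping — is then a routine if lengthy consequence; and since this is exactly what is carried out in \cite{Ikawa2,IkawaMult,plaques} and, in the two-obstacle case, in \cite{Schreodinger}, in practice the proof amounts to quoting those results in the present notation and checking that the constants are uniform in $r$ and $l$.
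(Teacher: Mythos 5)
The paper offers no proof of this proposition: it is quoted, in the present notation, from Ikawa \cite{Ikawa2,IkawaMult} and \cite{plaques}, and your proposal ultimately does the same, so you are taking essentially the same route as the paper. Your intermediate sketch of the mechanism behind those references (Riccati evolution of the second fundamental form of the wavefront along free legs, curvature gain at reflections off strictly convex boundaries, contraction of the one-period map on the cone of positive-definite matrices with the invariant wavefront carrying the factor $\lambda_{I}$ per period, then telescoping into $a_{I,l}$ and propagating the exponential convergence to derivatives) is a faithful outline of their argument, so the proposal is consistent with what the paper relies on.
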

In the same way as in \cite{Schreodinger}, it implies in particular:
\begin{prop}
\label{prop:essbouds}If $J=rI+l$, where $I$ is a primitive trajectory
and $l\leq|I|$, then the following bounds hold on $\mathcal{U}_{I,l}^{\infty}$:
\[
|w_{k}^{J}|_{m}\leq C_{k}\lambda_{I}^{|J|}h^{-(2k+m)c\epsilon}.
\]
Moreover, on the whole space, $|w_{k}^{J}|_{m}\leq C_{k}h^{-(2k+m)c\epsilon}.$
\end{prop}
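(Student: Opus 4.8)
The plan is to combine the convergence statement of Proposition~\ref{prop:convL} with the explicit formulas for $w_k^J$ from Proposition~\ref{prop:solref} and the derivative bounds already collected in Corollary~\ref{cor:boundsdirect}. The key point is that the leading profile $w_0^J = \Lambda\varphi_J(x,\xi)\,p(\hat X_{-2t}(x,|\xi|\nabla\varphi_J),\xi)$ is a product of the curvature factor $\Lambda\varphi_J$ and a transported cutoff $p$; Proposition~\ref{prop:convL} says $\Lambda\varphi_J = \lambda_I^r a_{I,l} + O(\lambda_I^r\alpha^{|J|})$ in every $C^m$ norm, so in particular $|\Lambda\varphi_J|_m \lesssim \lambda_I^r$. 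Since $|J| = r|I| + |l|$ with $|l|\le |I|$, and $|I|$ is bounded along the finitely many primitive trajectories under consideration, we have $\lambda_I^r \simeq \lambda_I^{|J|/|I|}$, i.e.\ $\lambda_I^r$ and $\lambda_I^{|J|}$ differ only by a bounded power; absorbing this into the constant gives the stated $\lambda_I^{|J|}$.

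First I would treat the case $k=0$: differentiate $w_0^J$ using the Leibniz rule. The factor $\Lambda\varphi_J$ contributes $|\Lambda\varphi_J|_m \lesssim \lambda_I^r$ by Proposition~\ref{prop:convL}; the factor $p(\hat X_{-2t}(x,|\xi|\nabla\varphi_J),\xi)$, when differentiated, produces derivatives of $p$ (which are $O(h^{-m c\epsilon})$ because of the construction \eqref{eq:contrq} of $q_{I,T}=p$) composed with derivatives of the backward flow and of $\nabla\varphi_J$, the latter bounded by $D_{\alpha,\beta}^{|J|}$ thanks to Proposition~\ref{prop:decder}. The apparent obstruction is the exponentially growing constant $D_{\alpha,\beta}^{|J|}$, but this is harmless: it is multiplied by $\lambda_I^r$ coming from the curvature factor, and in \emph{this} proposition we are allowed to keep an $|J|$-dependent constant $C_k$ on the right-hand side (the $\alpha^{|J|}$ gain from Proposition~\ref{prop:convL} is only needed when summing in $J$, which happens later in the paper, not here). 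So for the leading term one gets $|w_0^J|_m \le C_m \lambda_I^{|J|} h^{-m c\epsilon}$, exactly as claimed with $k=0$.

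Then I would run an induction on $k$ using the recursion
\[
w_k^J(x,t) = -i\int_0^t g_{\varphi_J}(x,t-s,\xi)\,\Delta w_{k-1}^{J(x,\xi,t-s)}(\hat X_{-2(t-s)}(x,|\xi|\nabla\varphi_J),s)\,ds.
\]
Applying the inductive hypothesis to $w_{k-1}$ (noting that the intermediate story $J(x,\xi,t-s)$ is a truncation of $J=rI+l$, hence still of the form $r'I+l'$ with $r'\le r$, so the bound $\lambda_I^{|J(x,\xi,t-s)|}\le \lambda_I^{\text{(const)}}$ holds up to a fixed factor), each $x$- or $\xi$-derivative falling on $\Delta w_{k-1}$ produces two extra derivatives and hence a factor $h^{-2c\epsilon}$ beyond what $w_{k-1}$ already carried, while derivatives hitting $g_{\varphi_J}$, the flow, or $\nabla\varphi_J$ are again controlled by $\lambda_I$-type factors and $|J|$-dependent constants. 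Counting: $w_k$ carries $h^{-2kc\epsilon}$ from the $k$ successive Laplacians and $h^{-mc\epsilon}$ from the $m$ derivatives demanded by $|\cdot|_m$, i.e.\ $h^{-(2k+m)c\epsilon}$ total; the time integral over $t\le T\le \epsilon|\log h|$ contributes at most a factor $|\log h|$, which is absorbed into $h^{-c\epsilon}$ (after harmlessly enlarging $c$). This yields $|w_k^J|_m \le C_k \lambda_I^{|J|} h^{-(2k+m)c\epsilon}$ on $\mathcal U_{I,l}^\infty$. For the global bound, one repeats the same computation but replaces $|\Lambda\varphi_J|_m \lesssim \lambda_I^{|J|}$ by the crude bound $|\Lambda\varphi_J|_m \lesssim 1$ (valid off $\mathcal U_{I,l}^\infty$, where the curvature product need not decay), giving $|w_k^J|_m \le C_k h^{-(2k+m)c\epsilon}$ everywhere.

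The main obstacle is purely bookkeeping: tracking how the $|J|$-dependent constants $D_{\alpha,\beta}^{|J|}$, $C_\alpha^{|J|}$, the geometric factors $\lambda_I^r$ versus $\lambda_I^{|J|}$, and the powers of $h^{-c\epsilon}$ interact through the Leibniz rule and the Duhamel recursion, and checking that at no stage does one need more than the allowed $C_k$-dependence. Since all of these ingredients — the convergence of $\Lambda\varphi_J$, the phase-derivative bounds, the support localization to $\mathcal U_{I,l}^\infty$, and the symbol bounds \eqref{eq:contrq} — are already in hand, and since the combinatorics is literally the same as in \cite{Schreodinger}, the proof is a direct transcription: one invokes Proposition~\ref{prop:convL} in place of its two-obstacle analogue and otherwise repeats the earlier argument verbatim.
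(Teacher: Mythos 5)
Your overall strategy (combine Proposition \ref{prop:convL} with the explicit formulas of Proposition \ref{prop:solref} and an induction on $k$ through the Duhamel recursion) is indeed the intended route — the paper itself only says the bounds follow ``as in \cite{Schreodinger}'' from Proposition \ref{prop:convL}. But two of your reductions are genuinely wrong. First, the passage from the $\lambda_I^{r}$ decay furnished by Proposition \ref{prop:convL} to the stated $\lambda_I^{|J|}$ rests on the claim that ``$|I|$ is bounded along the finitely many primitive trajectories under consideration.'' In the $N\geq 3$ setting this is false: $\mathcal{P}$ is infinite and the primitive periods $|I|$ (equivalently the lengths $d_\gamma$, cf.\ (\ref{eq:cardgamma})) are unbounded — this is exactly why the strong hyperbolicity condition (\ref{eq:IK1}) is an infinite sum and is the whole novelty of the paper. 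Hence $\lambda_I^{r}$ and $\lambda_I^{|J|}=\lambda_I^{r|I|+l}$ do not differ ``by a bounded power,'' and your absorption into $C_k$ is unjustified; whatever relation between $r$, $|J|$ and the exponent is used must be argued uniformly over the infinite family of primitive trajectories.

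Second, and more seriously, you declare the exponentially growing factors $D_{\alpha,\beta}^{|J|}$ from Proposition \ref{prop:decder} ``harmless'' because the constant $C_k$ may depend on $|J|$. It may not: the bound of Proposition \ref{prop:essbouds} is used in (\ref{eq:wkmaj1})--(\ref{eq:wxp1}) by summing over the exponentially many ($\sim N^{t}$) stories $J$ with $|J|\sim t$, and the exponential decay (\ref{eq:wxp1}) survives only because the constant is uniform in $J$ and $I$ and the remaining $\lambda_I$-factors are summable thanks to (\ref{eq:IK1}); an extra $D^{|J|}$ with arbitrary $D>1$ destroys this, since unlike the stationary-phase remainder (\ref{eq:remstph}) there is no spare positive power of $h$ here to absorb $C^{t}\leq h^{-\eta\epsilon}$. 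The uniform-in-$J$ control of the $x$-derivatives does not come from Proposition \ref{prop:decder} (which concerns $\xi$-derivatives and is only invoked, with its exponential loss, where a power of $h$ is available, as in Corollary \ref{cor:boundsdirect}); it comes from Ikawa's uniform phase estimate $|\nabla\varphi_J|_m\leq C_m|\nabla\varphi|_m$ together with the uniform (contractive) bounds on $x\mapsto\hat X_{-2t}(x,|\xi|\nabla\varphi_J)$ along the reflected Lagrangians, as in \cite{Schreodinger}. Without these two points your argument does not establish the proposition in the uniform form in which it is actually needed.
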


\section{Proof of the main result}

Let $K\geq0$. By the previous section, the function
\[
(x,t)\rightarrow\frac{1}{(2\pi h)^{3}}\sum_{J=rI+l}\int\sum_{k=0}^{K}h^{k}w_{k}^{J}(x,t,\xi)e^{-i(\varphi_{J}(x,\xi)|\xi|-t\xi^{2})/h}d\xi
\]
satisfies the approximate equation
\[
\partial_{t}u-ih\Delta u=-ih^{K}\frac{1}{(2\pi h)^{3}}\sum_{J=rI+l}\int\Delta w_{K-1}^{J}(x,t,\xi)e^{-i(\varphi_{J}(x,\xi)|\xi|-t\xi^{2})/h}d\xi
\]
with data $\delta_{I,T}^{y}$. Because $e^{-i(t-s)h\Delta}$ is an
$H^{m}$-isometry and by the Duhamel formula, the difference from
the actual solution $e^{-ith\Delta}\delta^{y}$ is bounded in $H^{m}$
norm by 
\[
C\times|t|\times h^{K-3}\times\sup_{t,\xi}\sum_{J=rI+l}\Vert\Delta w_{K-1}^{J}(\cdot,t,\xi)e^{-i(\varphi_{J}(\cdot,\xi)|\xi|-t\xi^{2})/h}\Vert_{H^{m}}.
\]
Therefore,
\begin{equation}
\sum_{I\text{ primitive}}e^{-ith\Delta}\delta_{I}^{y}(x)=S_{K}(x,t)+R_{K}(x,t)\label{eq:sumdelta}
\end{equation}
with
\[
S_{K}(x,t)=\frac{1}{(2\pi h)^{3}}\sum_{J\in\mathcal{I}}\int\sum_{k=0}^{K}h^{k}w_{k}^{J}(x,t,\xi)e^{-i(\varphi_{J}(x,\xi)|\xi|-t\xi^{2})/h}d\xi
\]
and, for $0\leq t\leq\epsilon|\log h|$
\begin{equation}
\Vert R_{K}(\cdot,t)\Vert_{H^{m}}\lesssim|\log h|h^{K-3}\sup_{t,\xi}\sum_{J\in\mathcal{I}}\Vert\Delta w_{K-1}^{J}(\cdot,t,\xi)e^{-i(\varphi_{J}(\cdot,\xi)|\xi|-t\xi^{2})/h}\Vert_{H^{m}},\label{eq:rK1}
\end{equation}
where $w_{k}^{J}$ is understood to be constructed from $p_{I,T}$
when $J=rI+l$.

\subsection*{The reminder}

We first deal with the reminder term $R_{K}$. Let us denote
\[
W_{K-1}^{J}(x,t)=\Delta w_{K-1}^{J}(\cdot,t,\xi)e^{-i(\varphi_{J}(\cdot,\xi)|\xi|-t\xi^{2})/h}
\]
Notice that, by construction of the $w_{k}$'s, $w_{k}^{J}$ is supported
in a set of diameter $(C+\beta_{0}t)$. Therefore, using Proposition
\ref{prop:essbouds} to control the derivatives coming from $w_{K-1}$
and the estimate
\[
|\nabla\varphi_{J}|_{m}\leq C_{m}|\nabla\varphi|_{m}
\]
from \cite{Ikawa2} to control the derivatives coming from the phase
we get:
\[
\Vert\partial^{m}W_{K-1}^{J}\Vert_{L^{2}}\lesssim C_{K}(1+\beta_{0}t)^{\frac{1}{2}}\Vert\partial^{m}W_{K-1}^{J}\Vert_{L^{\infty}}\lesssim C_{K}(1+t)^{\frac{1}{2}}h^{-m}\times h^{-(2K+m+2)c\epsilon}
\]
and thus, by (\ref{eq:rK1}) and the Sobolev embedding $H^{2}\hookrightarrow L^{\infty}$,
for $0\leq t\leq\epsilon|\log h|$
\begin{equation}
\Vert R_{K}\Vert_{L^{\infty}}\lesssim|\log h|^{\frac{3}{2}}h^{K(1-2c\epsilon)-5-4c\epsilon}|\left\{ J\in\mathcal{I},\text{ s.t }w_{K-1}^{J}\neq0\right\} |.\label{eq:rK0}
\end{equation}
Note that $w_{K-1}^{J}(t)\neq0$ implies by \lemref{support} that
$|J|\leq c_{1}t$, and $|\left\{ J\in\mathcal{I},\text{ s.t }w_{K-1}^{J}\neq0\right\} |$
is bounded by the number of elements in 
\[
\alpha_{\lceil c_{1}t\rceil}
\]
where
\[
\alpha_{k}=\left\{ \text{sequences \ensuremath{s} in \ensuremath{\llbracket1,N\rrbracket} of lenght \ensuremath{\leq}\ensuremath{k} s.t }s_{i+1}\neq s_{i}\right\} 
\]
But
\begin{lem}
The number of elements in $\alpha_{k}$ admits the bound
\[
|\alpha_{k}|\leq C_{N}N^{k}.
\]
\end{lem}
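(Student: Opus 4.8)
The plan is to bound $|\alpha_{k}|$ crudely by the total number of finite sequences of length at most $k$ over the alphabet $\llbracket1,N\rrbracket$, simply discarding the constraint $s_{i+1}\neq s_{i}$. First I would observe that there are exactly $N^{m}$ sequences of length $m$ (including the empty sequence for $m=0$), so that
\[
|\alpha_{k}|\leq\sum_{m=0}^{k}N^{m}=\frac{N^{k+1}-1}{N-1}\leq\frac{N}{N-1}N^{k},
\]
which gives the claim with $C_{N}=\frac{N}{N-1}$ (for instance $C_{N}=2$ works for all $N\geq2$).

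If a sharper constant were desired, I would instead count the sequences of length exactly $m\geq1$ satisfying $s_{i+1}\neq s_{i}$: there are $N$ choices for $s_{1}$ and $N-1$ choices for each of $s_{2},\dots,s_{m}$, hence $N(N-1)^{m-1}$ such sequences. Summing the geometric series $\sum_{m=1}^{k}N(N-1)^{m-1}$, adding the empty sequence, and using $(N-1)^{k}\leq N^{k}$ together with $N^{k}\geq1$ again yields a bound of the form $C_{N}N^{k}$, now with a constant that does not blow up as $N$ grows.

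There is no genuine obstacle in this lemma: the content is entirely a geometric-series estimate, and the constraint $s_{i+1}\neq s_{i}$ is not even needed for the stated bound — it only serves to improve the constant. The only point worth stating carefully is that $C_{N}$ is allowed to depend on $N$, which is harmless in the sequel since $N$ is fixed throughout.
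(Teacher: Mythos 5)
Your proof is correct. Your main route is slightly cruder than the paper's: you simply drop the constraint $s_{i+1}\neq s_{i}$ and bound $|\alpha_{k}|$ by the total number of sequences of length at most $k$, summing the geometric series $\sum_{m=0}^{k}N^{m}\leq\frac{N}{N-1}N^{k}$, which already gives the stated bound with $C_{N}=\frac{N}{N-1}$. The paper instead counts exactly the constrained sequences of each length, $|\beta_{m}|=N(N-1)^{m-1}$ via the obvious recursion $|\beta_{m+1}|=(N-1)|\beta_{m}|$, and sums to get $|\alpha_{k}|=N\frac{(N-1)^{k}-1}{N-2}+1\leq C_{N}N^{k}$ --- which is exactly your second, "sharper constant" variant. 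Since the lemma is only used to bound the number of nonvanishing $w_{K-1}^{J}$ by a negative power of $h$ (absorbed by choosing $\epsilon$ small), the improvement from base $N$ to base $N-1$ is irrelevant, and either argument suffices; your remark that $C_{N}$ may depend on $N$ because $N$ is fixed is also the right observation.
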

\begin{proof}
Let us denote
\[
\beta_{k}=\left\{ \text{sequences \ensuremath{s} in \ensuremath{\llbracket1,N\rrbracket} of lenght \ensuremath{k} s.t }s_{i+1}\neq s_{i}\right\} .
\]
We have
\[
|\beta_{1}|=N
\]
and 
\[
|\beta_{k+1}|=(N-1)|\beta_{k}|.
\]
Therefore
\[
|\beta_{k}|=N(N-1)^{k-1},\ |\alpha_{k}|=\sum_{i=1}^{k}\beta_{i}+1=N\frac{(N-1)^{k}-1}{N-2}+1,
\]
and the bound holds.
\end{proof}
Thus
\begin{equation}
|\left\{ J\in\mathcal{I},\text{ s.t }w_{K-1}^{J}\neq0\right\} |\lesssim N^{t}\label{eq:wkJnonnul}
\end{equation}
and therefore, according to (\ref{eq:rK0}), for $0\leq t\leq\epsilon|\log h|$
\begin{align*}
\Vert R_{K}\Vert_{L^{\infty}} & \lesssim C_{K}|\log h|^{\frac{3}{2}}h^{K(1-2c\epsilon)-5-4c\epsilon}h^{-\epsilon\log N}\\
 & \lesssim C_{K}h^{K(1-2c\epsilon)-6-4c\epsilon-\epsilon\log N}.
\end{align*}
We take $\epsilon>0$ small enough so that $2c\epsilon\leq\frac{1}{2}$
and $\epsilon\log N\leq1$ in order to get
\[
\Vert R_{K}\Vert_{L^{\infty}}\leq C_{K}h^{\frac{K}{2}-8}.
\]
Let us fix $K=15$. Then, $\Vert R_{K}\Vert_{L^{\infty}}\leq C_{K}h^{-\frac{1}{2}}$.
Therefore, as $t\leq\epsilon|\log h|$ implies $h\leq e^{-\frac{t}{\epsilon}}$,
we get
\begin{equation}
\Vert R_{K}\Vert_{L^{\infty}}\leq C_{K}h^{-\frac{3}{2}}e^{-\frac{t}{\epsilon}}\label{eq:RK}
\end{equation}
for $0\leq t\leq\epsilon|\log h|$.

\subsection*{Times $t\protect\geq t_{0}>0$}

Let us now deal with the approximate solution $S_{K}$, $K$ been
fixed and $x$ in $\text{Supp}\chi$. Let $t_{0}>0$ to be chosen
later. For $t\geq t_{0}$, by \lemref{nondeg} we can perform a stationary
phase on each term of the $J$ sum, up to order $h$. We obtain, for
$t\geq t_{0}$
\begin{multline}
S_{K}(x,t)=\frac{1}{(2\pi h)^{3/2}}\sum_{J\in\mathcal{I}}e^{-i(\varphi_{J}(x,s_{J}(t,x))|s_{J}(t,x)|-ts_{J}(t,x)^{2})/h}\left(w_{0}^{J}(t,x,s_{J}(t,x))+h\tilde{w}_{1}^{J}(t,x)\right)\\
+\frac{1}{h^{3/2}}\sum_{J\in\mathcal{I}}R_{\text{st.ph.}}^{J}(x,t)+\frac{1}{(2\pi h)^{3}}\sum_{J\in\mathcal{I}}\int\sum_{k=2}^{K}h^{k}w_{k}^{J}(x,t,\xi)e^{-i(\varphi_{J}(x,\xi)|\xi|-t\xi^{2})/h}d\xi\label{eq:sk}
\end{multline}
where $s_{J}(t,x)$ is an eventual unique critical point of the phase
(if it does not exist, the corresponding term is $O(h^{\infty})$
and by (\ref{eq:wkJnonnul}) it does not contribute). The term $\tilde{w}_{1}^{J}$
is a linear combination of
\begin{gather*}
D_{\xi}^{2}w_{0}^{J}(t,x,s_{J}(t,x)),w_{1}^{J}(t,x,s_{J}(t,x)),
\end{gather*}
and $R_{\text{st.ph.}}^{J}$ is the reminder involved in the stationary
phase, who verifies (see for example to \cite{semibook}, Theorem
3.15)
\begin{equation}
|R_{\text{st.ph.}}^{J}(x,t)|\leq h^{2}\sum_{|\alpha|\leq7}\sup|D_{\xi}^{\alpha}w_{k}^{J}(x,\cdot,t)|.\label{eq:remstph}
\end{equation}
We recall that by \lemref{supportchi}, for $0\leq t\leq\epsilon|\log h|$,
$\chi w_{k}^{J}$ is supported in $\mathcal{U}_{I,l}^{\infty}$. Therefore,
for $0\leq t\leq\epsilon|\log h|$ and all $0\leq k\leq K-1$, we
have, if $x\in\text{Supp}\chi$, using the estimate of Proposition
\propref{essbouds}, because $w_{k}^{J}(x,\xi,\cdot)$ is supported
in $\{c_{1}|J|\leq t\leq c_{2}(|J|+1)\}$ by \lemref{support},
\[
\sum_{J\in\mathcal{I}}|w_{k}^{J}|\leq C_{k}h^{-2kc\epsilon}\sum_{\substack{J=rI+s\ |\ w_{k}^{J}\neq0\\
I\text{ primitive, }|s|\leq|I|-1
}
}\lambda_{I}^{|J|}.
\]
Thus
\[
\sum_{J\in\mathcal{I}}|w_{k}^{J}|\leq C_{k}h^{-2kc\epsilon}\sum_{I\text{ primitive}}\sum_{\substack{r\geq0\\
0\leq s\leq|I|-1
}
}\lambda_{I}^{\rho_{k}(I)+r}\lambda_{I}^{s},
\]
where we denoted
\[
\rho_{k}(I)=\inf\left\{ r\geq1\text{ s.t. }\exists s,\ w_{k}^{rI+s}\neq0\right\} ,
\]
and we get
\begin{equation}
\sum_{J\in\mathcal{I}}|w_{k}^{J}|\leq C_{k}h^{-2kc\epsilon}\sum_{\substack{I\text{ primitive}\\
\rho_{k}(I)\neq\infty
}
}\frac{1}{1-\lambda_{I}}\lambda_{I}^{\rho_{k}(I)}|I|.\label{eq:wkmaj1}
\end{equation}
Moreover, as
\begin{equation}
\rho_{k}(I)\lesssim\frac{t}{|I|}\label{eq:rhokbound}
\end{equation}
and, because as remarked in \cite{MR1254820}, if $\gamma$ is the
trajectory associated to $I$
\begin{equation}
\frac{d_{\gamma}}{\text{diam}\mathcal{C}}\leq\text{card}\gamma=|I|\leq\frac{d_{\gamma}}{d_{\text{min}}}\label{eq:cardgamma}
\end{equation}
where $\mathcal{C}$ is the convex hull of $\cup\Theta_{i}$. Therefore,
combining (\ref{eq:wkmaj1}) with (\ref{eq:rhokbound}) and (\ref{eq:cardgamma})
\begin{equation}
\sum_{J\in\mathcal{I}}|w_{k}^{J}|\lesssim C_{k}h^{-2kc\epsilon}\sum_{\substack{\mathcal{\gamma}\text{ primitive}}
}d_{\gamma}\lambda_{\gamma}^{D_{k}\frac{t}{d_{\gamma}}}.\label{eq:wkmaj2}
\end{equation}
But, by Ikawa condition (\ref{eq:IK1}), there exists $\alpha>0$
such that 
\[
\sum_{\gamma\text{ primitive}}d_{\gamma}\lambda_{\gamma}e^{\alpha d_{\gamma}}<\infty.
\]
Let us denote
\[
C_{\gamma}=\lambda_{\gamma}e^{\alpha d_{\gamma}}.
\]
Notice that, because $d_{\gamma}$ is bounded from below by $d_{\text{\text{min }}}$
uniformly with respect to $\gamma$, we have a fortiori
\[
\sum C_{\gamma}<\infty.
\]
Therefore, all $C_{\gamma}$ but a finite number are lesser than one.
Reducing $\alpha$ if necessary and taking it small enough, we can
thus assume that 
\[
0\leq C_{\gamma}\leq1,\ \forall\gamma.
\]
Hence, for $t\geq\frac{d_{\text{min}}}{D_{k}}$ we have 
\[
C_{\gamma}^{D\frac{t}{d_{\gamma}}}\leq C_{\gamma},
\]
thus, by (\ref{eq:wxp1}), for $t\geq\frac{d_{\text{min}}}{D_{k}}$
\begin{multline*}
\sum_{J\in\mathcal{I}}|w_{k}^{J}|\lesssim C_{k}h^{-2kc\epsilon}\sum_{\substack{\mathcal{\gamma}\text{ primitive}}
}d_{\gamma}\left(C_{\gamma}e^{-\alpha d_{\gamma}}\right)^{D_{k}\frac{t}{d_{\gamma}}}\\
\lesssim C_{k}h^{-2kc\epsilon}\sum_{\gamma\text{ primitive}}d_{\gamma}C_{\gamma}^{D\frac{t}{d_{\gamma}}}e^{-\alpha D_{k}t}\leq C_{k}h^{-2kc\epsilon}e^{-\alpha D_{k}t}\sum_{\gamma\text{ primitive}}d_{\gamma}C_{\gamma},
\end{multline*}
and hence, because of (\ref{eq:IK1}),
\begin{equation}
\sum_{J\in\mathcal{I}}|w_{k}^{J}|\leq C_{k}h^{-2kc\epsilon}e^{-\mu_{k}t}\text{ for }\frac{d_{\min}}{D_{k}}\leq t\leq\epsilon|\log h|.\label{eq:wxp1}
\end{equation}
for some $\mu_{k}>0$. Now, remark that for $t\leq\frac{d_{\min}}{D}$,
by (\ref{eq:wkmaj2}) we have
\[
\sum_{J\in\mathcal{I}}|w_{k}^{J}|\lesssim C_{k}h^{-2kc\epsilon}\sum_{\substack{\mathcal{\gamma}\text{ primitive}}
}d_{\gamma}\lambda_{\gamma}
\]
but because $d_{\gamma}$ are bounded below, (\ref{eq:IK1}) implies
a fortiori 
\[
\sum_{\substack{\mathcal{\gamma}\text{ primitive}}
}d_{\gamma}\lambda_{\gamma}<\infty
\]
and thus
\begin{equation}
\sum_{J\in\mathcal{I}}|w_{k}^{J}|\lesssim C_{k}h^{-2kc\epsilon}\text{ for }t_{0}\leq t\leq\frac{d_{\min}}{D_{k}}.\label{eq:wkp2}
\end{equation}
Combining (\ref{eq:wxp1}) and (\ref{eq:wkp2}) we get
\[
\sum_{J\in\mathcal{I}}|w_{k}^{J}|\leq C'_{k}h^{-2kc\epsilon}e^{-\mu_{k}t}\text{ for }t_{0}\leq t\leq\epsilon|\log h|
\]
Let us take $\epsilon>0$ small enough so that $2Kc\epsilon\leq\frac{1}{2}$.
We get, for $t_{0}\leq t\leq\epsilon|\log h|$

\begin{align}
\sum_{J\in\mathcal{I}}|w_{k}^{J}| & \leq C_{k}h^{-\frac{1}{2}}e^{-\mu t},\ 1\leq k\leq K-1,\label{eq:sk1}\\
\sum_{J\in\mathcal{I}}|w_{0}^{J}| & \lesssim e^{-\mu t}.\label{eq:sk2}
\end{align}
with 
\[
\mu=\min_{0\leq k\leq K-1}\mu_{k}>0.
\]
Moreover, using (\ref{eq:remstph}) together with (\ref{eq:wkJnonnul}),
\lemref{support} and Corollary \corref{boundsdirect} we obtain,
for $t\leq\epsilon|\log h|$
\begin{multline*}
\sum_{J\in\mathcal{I}}|R_{\text{st.ph.}}^{J}(x,t)|\leq h^{2}\sum_{J\in\mathcal{I}}\sum_{|\alpha|\leq7}\sup|D_{\xi}^{\alpha}w_{k}^{J}(x,\cdot,t)|\\
\leq h^{2-(2K+7)c\epsilon}|\left\{ J\in\mathcal{I},\text{ s.t }w_{K-1}^{J}\neq0\right\} |C^{\frac{t}{c_{1}}}\lesssim h^{2-(2K+7)c\epsilon}N^{t}C^{\frac{t}{c_{1}}}\\
\leq h^{2-(2K+7)c\epsilon}h^{-\eta\epsilon}
\end{multline*}
where $\eta>0$ depends only of $\alpha_{0},\beta_{0}$, and the geometry
of the obstacles. Therefore, choosing $\epsilon>0$ small enough
\begin{equation}
\sum_{J\in\mathcal{I}}|R_{\text{st.ph.}}^{J}(x,t)|\lesssim h\leq e^{-t/\epsilon}.\label{eq:remphst}
\end{equation}
for $t\leq\epsilon|\log h|$. In the same way we get, taking $\epsilon>0$
small enough and $t\leq\epsilon|\log h|$
\[
\sum_{J\in\mathcal{I}}|D_{\xi}^{2}w_{0}^{J}|\lesssim N^{t}C^{\frac{t}{c_{1}}}\lesssim h^{-1/4}
\]
and therefore
\begin{equation}
\sum_{J\in\mathcal{I}}|D_{\xi}^{2}w_{0}^{J}|\leq h^{-\frac{1}{2}}e^{-t/4\epsilon}.\label{eq:sk3}
\end{equation}
 So, combining (\ref{eq:sk1}), (\ref{eq:sk2}), (\ref{eq:remphst})
and (\ref{eq:sk3}) with (\ref{eq:sk}), we obtain, for some $\nu>0$
\begin{equation}
|\chi S_{K}(x,t)|\lesssim\frac{e^{-\nu t}}{h^{3/2}}\ \text{ for }t_{0}\leq t\leq T.\label{eq:SKgrand}
\end{equation}

\subsection*{Conclusion}

Combining the above estimate (\ref{eq:SKgrand}) with the control
of the reminder term (\ref{eq:RK}) and taking $t=T$ gives (\ref{eq:butult2})
and therefore the dispersive estimate (\ref{eq:finbutult}). By the
work of reduction of the third section and summed up in \lemref{redtrap-ult},
Theorem \ref{th} is therefore demonstrated for the Schrödinger equation.

\section{The wave equation}

In the case of the wave equation, the counterpart of the smoothing
estimate without loss outside the trapped set, namely the following
$L^{2}$- decay of the local energy
\begin{equation}
\Vert(Au,A\partial_{t}u)\Vert_{L^{2}(\mathbb{R},\dot{H}^{\gamma}\times\dot{H}^{\gamma-1})}\lesssim\Vert u_{0}\Vert_{\dot{H}^{\gamma}}+\Vert u_{1}\Vert_{\dot{H}^{\gamma}},\label{eq:dec_waves}
\end{equation}
where $A$ has micro-support disjoint from $\mathcal{K}$, is obtained
using the same commutator argument, writing in the case of the wave
equation as
\[
0=\int\int_{\mathbb{R}\times\Omega}\langle u,[\oblong,P]u\rangle+\int\int_{\mathbb{R}\times\partial\Omega}\langle Pu,\partial_{n}u\rangle,
\]
where $P$ is any pseudo-differential operator. Notice that the symbol
of $P$ at the border, as an operator acting on waves, has been derivated
in $\{\tau^{2}-\eta^{2}>0\}$ by \cite{MorRS}. Our method apply in
the exact same way as for the Schrödinger equation.

Once (\ref{eq:dec_waves}) is obtained, it follows as in \cite{Waves}
that we can reduce ourselves to prove the Strichartz estimates near
the trapped set in logarithmic times, namely
\[
\Vert\text{Op}_{h}(\phi)u\Vert_{L^{q}(\epsilon|\log h|,L^{r}(\Omega))}\lesssim\Vert u_{0}\Vert_{\dot{H}^{s}}+\Vert u_{1}\Vert_{\dot{H}^{s-1}}
\]
where $u_{0,1}=\psi(-h^{2}\Delta)u_{0,1}$ and $\phi$ is supported
in a small neighborhood of $\mathcal{K}$. In order to reduce ourselves
at points of the phase-space that remain near a periodic trajectory
in logarithmic times, the exact same cuting as in the third section
holds, at the difference that the flow is followed at constant speed
one. 

Then, the construction of an approximate solution is the same as in
\cite{Waves}, with the adaptations of the $N$-convex framework presented
in the fourth section. In particular, the results of non-degeneracy
of the phase and stationary points of \cite{Waves} hold, as their
proof does not rely on the particular two-convex geometry. Thus, we
can perform the same stationary phase argument as in \cite{Waves},
the difference with the Schrödinger equation been that the phase is
now stationary on plain lines due to the constant speed of propagation,
and we obtain the good scale in $h$. Now, the only difference with
the conclusion section of \cite{Waves} is that we cannot deal with
\[
\sum_{J\in\mathcal{I}}
\]
as in the two convex case. But we can do it in the exact same way
as presented in the fifth section, using the strong hyperbolic setting
assumption (\ref{eq:IK1}), in order to deduce the sufficient time
decay. Thus the appropriate dispersive estimate for the waves is obtained
and the theorem follows.

\subsection*{Aknowledgments}

The author is grateful to Jared Wunsch for having indicated him the
paper of Datchev and Vasy \cite{DatchevVasy} which permits to remark
that we are able to obtain the more general smoothing estimate of
Proposition \propref{dv}, and to Fabrice Planchon and Nicolas Burq
for many discussions about the problem.

\bibliographystyle{amsalpha}
\bibliography{refs,/Users/David/Desktop/Projets/arXiv_waves/megaref}

\end{document}